\newtheorem{theorem}{Theorem}
\newtheorem{thm}[theorem]{Theorem}
\newtheorem{conjecture}[theorem]{Conjecture}
\newtheorem{corollary}[theorem]{Corollary}
\newtheorem{lemma}[theorem]{Lemma}
\newtheorem{proposition}[theorem]{Proposition}
\newtheorem{remark}[theorem]{Remark}
\let\Im\relax
\DeclareMathOperator{\Im}{Im}
\let\Re\relax
\DeclareMathOperator{\Re}{Re}
\newcommand{\C}{\mathbb{C}}
\newcommand{\h}{\mathbb{H}}
\newcommand{\M}{\mathbb{M}}
\newcommand{\N}{\mathbb{N}}
\newcommand{\p}{\mathbb{P}}
\newcommand{\Q}{\mathbb{Q}}
\newcommand{\R}{\mathbb{R}}
\newcommand{\Z}{\mathbb{Z}}
\newcommand{\eps}{\varepsilon}
\DeclareMathOperator{\SL}{SL}
\DeclareMathOperator{\PSL}{PSL}
\DeclareMathOperator{\vol}{Vol}
\DeclareMathOperator{\lcm}{lcm}
\DeclareMathOperator{\Id}{Id}
\DeclareMathOperator{\spec}{Spec}
\newcommand{\E}{\mathcal{E}}
\newcommand{\rO}{\mathcal{O}}
\newcommand{\s}{\mathcal{S}}
\newcommand{\abs}[1]{\left\vert#1\right\vert}
\begin{document}

\title[Kronecker's limit formula, holomorphic modular functions and $q$-expansions]
{Kronecker's limit formula, holomorphic modular functions and $q$-expansions on certain moonshine
groups}
\author[J.~Jorgenson]{Jay Jorgenson}
\address{Department of Mathematics, The City College of New York, Convent Avenue at 138th Street,
New York, NY 10031 USA, e-mail: jjorgenson@mindspring.com}
\author[L.~Smajlovi\'c]{Lejla Smajlovi\'c}
\address{Department of Mathematics, University of Sarajevo, Zmaja od Bosne 35, 71\,000 Sarajevo,
Bosnia and Herzegovina, e-mail: lejlas@pmf.unsa.ba}
\author[H.~Then]{Holger Then}
\address{Department of Mathematics, University of Bristol, University Walk, Bristol, BS8 1TW,
United Kingdom, e-mail: holger.then@bristol.ac.uk}

\begin{abstract}
For any square-free integer $N$ such that the ``moonshine group'' $\Gamma_0(N)^+$ has genus zero,
the Monstrous Moonshine Conjectures relate the Hauptmoduli of $\Gamma_0(N)^+$ to certain
McKay-Thompson series associated to the representation theory of the Fischer-Griess monster
group.  In particular, the Hauptmoduli admits a $q$-expansion which has integer coefficients.
In this article, we study the holomorphic function theory associated to higher genus moonshine
groups $\Gamma_0(N)^+$.  For all moonshine groups of genus up to and including three, we prove that
the corresponding function field admits two generators whose $q$-expansions have integer
coefficients, has lead coefficient equal to one, and has minimal order of pole at infinity.
As corollary, we derive a polynomial relation which defines the underlying projective curve, and
we deduce whether $i\infty$ is a Weierstrass point.
Our method of proof is based on modular forms and includes extensive computer assistance, which,
at times, applied Gauss elimination to matrices with thousands of entries, each one of which was
a rational number whose numerator and denominator were thousands of digits in length.
\end{abstract}

\thanks{J.~J.\ acknowledges grant support from NSF and PSC-CUNY grants, and H.~T.\ acknowledges
support from EPSRC grant EP/H005188/1.}

\maketitle

\section{Introduction}

\subsection{Classical aspects of Klein's $j$-invariant}

The action of the discrete group $\PSL(2,\Z)$ on the hyperbolic upper half plane $\h$ yields a
quotient space $\PSL(2,\Z)\backslash\h$ which has genus zero and one cusp.  By identifying
$\PSL(2,\Z)\backslash\h$ with a well-known fundamental domain for the action of $\PSL(2,\Z)$ on
$\h$, we will, as is conventional, identify the cusp of $\PSL(2,\Z)\backslash\h$ with the point
denoted by $i\infty$.  From the uniformization theorem, we have the existence of a single-valued
meromorphic function on $\PSL(2,\Z)\backslash\h$ which has a simple pole at $i\infty$ and which
maps the one-point compactification of $\PSL(2,\Z)\backslash\h$ onto the one-dimensional
projective space $\p^1$.  Let $z$ denote the global coordinate on $\h$, and set $q=e^{2\pi i z}$,
which is a local coordinate in a neighborhood of $i\infty$ in the compactification of
$\PSL(2,\Z)\backslash\h$.  The bi-holomorphic map $f$ from the compactification of
$\PSL(2,\Z)\backslash\h$ onto $\p^1$ is uniquely determined by specifying constants
$c_{-1} \neq 0$ and $c_0$ such that the local expansion of $f$ near $i\infty$ is of the form
$f(q) = c_{-1}q^{-1} + c_0 + O(q)$ as $q\to0$.  For reasons coming from the theory of automorphic
forms, one chooses $c_{-1} = 1$ and $c_0 = 744$.  The unique function obtained by setting of
$c_{-1}=1$ and $c_0=744$ is known as Klein's $j$-invariant, which we denote by $j(z)$.

Let
$$
\Gamma_{\infty} = \left\{ \begin{pmatrix} 1& n\\ 0& 1 \end{pmatrix} \in \SL(2,\Z) \right\}.
$$
One defines the holomorphic Eisenstein series $E_k$ of even weight $k\geq4$ by the series
\begin{align}\label{E k defin}
E_k(z):= \sum_{\gamma \in \Gamma_\infty \backslash\PSL(2,\Z)} (cz+d)^{-k}
\quad \text{where} \quad
\gamma = \begin{pmatrix} * & * \\ c & d \end{pmatrix}.
\end{align}
Classically, it is known that
\begin{align}\label{j_via_Eisenstein}
j(z) = 1728E_4^3(z)/(E_4^3-E_6^2),
\end{align}
which yields an important explicit expression for the $j$-invariant.  As noted on page 90 of
\cite{Serre73}, which is a point we will emphasize in the next subsection, the $j$-invariant
admits the series expansion
\begin{align}\label{j_expansion}
j(z) = \frac{1}{q} + 744 + 196884 q + 21493760 q^2 + O(q^3)
\quad
\text{as $q \to 0$.}
\end{align}
An explicit evaluation of the coefficients in the expansion \eqref{j_expansion} was
established by Rademacher in \cite{Ra38}, and we refer the reader to the fascinating
article \cite{Kn90} for an excellent exposition on the history of the $j$-invariant
in the setting of automorphic forms.  More recently, there has been some attention
on the computational aspects of the Fourier coefficients in \eqref{j_expansion};
see, for example, \cite{BK03} and, for the sake of completeness, we mention the
monumental work in \cite{Edix11} with its far reaching vision.

For quite some time it has been known that the $j$-invariant has importance far beyond the
setting of automorphic forms and the uniformization theorem as applied to
$\PSL(2,\Z)\backslash\h$.  For example, in 1937 T.~Schneider initiated the study of the
transcendence properties of $j(z)$.  Specifically, Schneider proved that if $z$ is a quadratic
irrational number in the upper half plane then $j(z)$ is an algebraic integer, and if $z$ is an
algebraic number but not imaginary quadratic then $j(z)$ is transcendental.  More specifically,
if $z$ is any element of an imaginary quadratic extension of $\Q$, then $j(z)$ is an algebraic
integer, and the field extension $\Q[j(z),z]/\Q(z)$ is abelian.  In other words, special values
of the $j$-invariant provide the beginning of class field theory.  The seminal work of
Gross-Zagier \cite{GZ85} studies the factorization of the $j$-function evaluated at imaginary
quadratic integers, yielding numbers known as singular moduli, from which we have an abundance
of current research which reaches in various directions of algebraic and arithmetic number theory.

There is so much richness in the arithmetic properties of the $j$-invariant that we are unable to
provide an exhaustive list, but rather ask the reader to accept the above-mentioned examples as
indicating the important role played by Klein's $j$-invariant in number theory and algebraic
geometry.

\subsection{Monstrous moonshine}

Klein's $j$-invariant attained another realm of importance with the discovery of
``Monstrous Moonshine''.  We refer to the article \cite{Ga06a} for a fascinating survey of the
history of monstrous moonshine, as well as the monograph \cite{Ga06b} for a thorough account of
the underlying mathematics and physics surrounding the moonshine conjectures.  At this point,
we offer a cursory overview in order to provide motivation for this article.

In the mid-1900's, there was considerable research focused toward the completion of the list of
sporadic finite simple groups.  In 1973, B.~Fischer and R.~L.\ Griess discovered independently, but
simultaneously, certain evidence suggesting the existence of the largest of all sporadic simple
groups; the group itself was first constructed by R.~L.\ Griess in \cite{Gr82} and is now known as
``the monster'', or ``the friendly giant'', or ``the Fischer-Griess monster'', and we denote the
group by $\M$.  Prior to \cite{Gr82}, certain properties of $\M$ were deduced assuming its
existence, such as its order and various aspects of its character table.  At that time,
there were two very striking observations.  On one hand, A.~Ogg observed that the set of primes
which appear in the factorization of the order of $\M$ is the same set of primes such that the
discrete group $\Gamma_0(p)^+$ has genus zero.  On the other hand, J.~McKay observed that the
linear-term coefficient in \eqref{j_expansion} is the sum of the two smallest irreducible
character degrees of $\M$.  J.~Thompson further investigated McKay's observation in \cite{Tho79a}
and \cite{Tho79b}, which led to the conjectures asserting all coefficients in the expansion
\eqref{j_expansion} are related to the dimensions of the components of a graded module admitting
action by $\M$.  Building on this work, J.~Conway and S.~Norton established the
``monstrous moonshine'' conjectures in \cite{CN79} which more precisely formulated relations
between $\M$ and Klein's $j$-invariant \eqref{j_expansion}.  A graded representation for $\M$
was explicitly constructed by I.~Frenkel, J.~Lepowsky, and A.~Meurman in \cite{FLM88} thus
proving aspects of the McKay-Thompson conjectures from \cite{Tho79a} and \cite{Tho79b}.
Building on this work, R.~Borcherds proved a significant portion of the Conway-Norton
``monstrous moonshine'' conjectures in his celebrated work \cite{Bo92}.  More recently,
additional work by many authors (too numerous to list here) has extended ``moonshine'' to
other simple groups and other $j$-invariants associated to certain genus zero Fuchsian groups.

Still, there is a considerable amount yet to be understood within the framework of
``monstrous moonshine''.  Specifically, we call attention to the following statement
by T.~Gannon from \cite{Ga06a}:

\emph{In genus $> 0$, two functions are needed to generate the function field.  A complication
facing the development of a higher-genus Moonshine is that, unlike the situation in genus $0$
considered here, there is no canonical choice for these generators.}

In other words, one does not know the analogue of Klein's $j$-invariant for moonshine groups of
genus greater than zero from which one can begin the quest for ``higher genus moonshine''.

\subsection{Our main result}

The motivation behind this article is to address the above statement by T.~Gannon.  The methodology
we developed yields the following result, which is the main theorem of the present paper.

\begin{thm}\label{main theorem}
For any square-free $N$, let $\Gamma_0(N)^+$ denote the ``moonshine group'' which
is obtained by adding the Atkin-Lehner involutions to the congruence subgroup $\Gamma_0(N)$.
Then the function field associated to every genus one, genus two, and genus three moonshine group
admits two generators whose $q$-expansions have integer coefficients after the lead coefficient has
been normalized to equal one.  Moreover, the orders of poles of the generators at $i\infty$ are at
most $g+2$.
\end{thm}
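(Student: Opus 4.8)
The plan is to build the generators explicitly out of holomorphic modular forms for $\Gamma_0(N)^+$ and extract their $q$-expansions. First I would fix a genus $g\in\{1,2,3\}$ moonshine group $\Gamma_0(N)^+$ and record the structure of its compactified quotient $X_N=\overline{\Gamma_0(N)^+\backslash\h}$: its genus, the number and widths of its cusps (for most square-free $N$ the group $\Gamma_0(N)^+$ has a single cusp at $i\infty$, which is the decisive simplification), and the elliptic fixed points, from which the valence formula gives the dimension of each space $M_k(\Gamma_0(N)^+)$ of weight-$k$ holomorphic forms. The key analytic input is that by Riemann--Roch on $X_N$, the space $H^0(X_N,\rO((g+2)\cdot i\infty))$ of meromorphic functions with a pole of order at most $g+2$ at the single cusp and no other poles has dimension $(g+2)-g+1=3$, so it contains, besides the constants, a function $x$ with a pole of exact order $g+1$ (the smallest possible pole order, since a function with a simple pole would force genus zero) together with a function $y$ of pole order $g+2$; by the standard argument $x$ and $y$ generate the function field. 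Concretely one realizes $x$ and $y$ as ratios $E/\Delta_N^{m}$ where $\Delta_N$ is a suitable weight-$k_0$ cusp form for $\Gamma_0(N)^+$ with a zero of order one at $i\infty$ (an appropriate product/eta-quotient playing the role of the discriminant) and $E$ ranges over a basis of $M_{m k_0}(\Gamma_0(N)^+)$; choosing $m$ large enough that $\dim M_{m k_0}$ exceeds $m k_0\cdot(\text{something})$ one picks off forms whose quotient by $\Delta_N^{m}$ has the prescribed pole orders at $i\infty$.

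Next I would pin down integrality and the normalization of the lead coefficient. The forms $E_4$, $E_6$ and the eta-quotients used to build $\Delta_N$ all have rational $q$-expansions with bounded denominators, and after multiplying through by the correct rational constant each basis form can be taken to have integral $q$-expansion; the delicate point is that the particular rational linear combinations that isolate the desired pole orders at $i\infty$ must again be arranged to have integer coefficients and lead coefficient one. This is exactly where the ``extensive computer assistance'' enters: one sets up, for each of the finitely many $N$ with $g\le 3$, the linear system expressing ``kill the top $q$-coefficients so that the pole order drops to the target value'' and solves it by Gauss elimination over $\Q$; the entries are the Fourier coefficients of the basis forms, which for the larger-level cases are rationals with enormous numerators and denominators, so the main computational obstacle is managing exact rational arithmetic on matrices with thousands of such entries without loss of precision. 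Once $x=q^{-(g+1)}+O(q^{-g})$ and $y=q^{-(g+2)}+O(q^{-(g+1)})$ are obtained with integer $q$-coefficients, one clears any residual common factor so that the lead coefficient is genuinely $1$.

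The remaining step is verification rather than construction: one checks that $x,y$ indeed generate the whole function field (equivalently, that the map $X_N\to\p^1$ given by $x$ has degree $g+1$ and $\C(X_N)=\C(x,y)$), which follows because $g+1$ equals the gonality bound forced by the single cusp together with Riemann--Roch, and $y$ does not lie in $\C(x)$ by a pole-order count. Finally, eliminating $q$ between the known $q$-expansions of $x$ and $y$ produces the defining polynomial relation $P(x,y)=0$ of the curve $X_N$; comparing the pole orders $g+1,g+2$ at $i\infty$ with the gap sequence of $X_N$ at that point tells us whether $i\infty$ is a Weierstrass point. The hardest part of the whole argument is not conceptual but computational: guaranteeing, across every square-free $N$ of genus one, two, and three, that the linear algebra producing the generators can be carried out in exact arithmetic and that the resulting $q$-expansions are simultaneously integral, monic, and of minimal pole order — the conceptual framework (Riemann--Roch plus eta-quotients plus Eisenstein series) is standard, but assembling it uniformly over all the relevant levels and certifying integrality is where the real work lies.
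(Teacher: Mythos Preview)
Your overall framework---build meromorphic functions as ratios $E/\Delta_N^m$ with Eisenstein-series numerators and an eta-quotient denominator, then perform Gauss elimination on the $q$-expansion coefficients to isolate generators of minimal pole order---is indeed what the paper does. However, two genuine gaps separate your sketch from an actual proof.

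\textbf{Integrality of the full $q$-expansion.} After Gauss elimination the generators have leading coefficient $1$ and the remaining coefficients are, a priori, merely rational. You write that one should ``clear any residual common factor'' to force integrality, but that operation destroys the monic normalization; conversely, keeping the lead coefficient equal to $1$ gives you no mechanism at all for proving the infinitely many remaining coefficients are integers---the computer only checks finitely many. The paper closes this gap with a separate theoretical result (its Theorems~\ref{genus_zero_integrality} and~\ref{genus_notzero_integrality}): using the action of the unscaled Hecke operators $T_p$ on $\Gamma_0(N)^+$-invariant functions for primes $p\nmid N$, it shows that if the coefficients of $j_{1;N}$ and $j_{2;N}$ are integral out to an explicitly computable index $\kappa$ (roughly of size $a_2(p_1p_2)^2$), then all coefficients are integral. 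This Hecke-operator reduction is the essential missing idea in your proposal, and without it the statement simply is not proved.

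\textbf{Pole orders at Weierstrass points.} Your Riemann--Roch computation that $\dim H^0(X_N,\rO((g+2)\cdot i\infty))=3$ is correct, but your conclusion that the two nonconstant basis elements have pole orders exactly $g+1$ and $g+2$ is not: it presupposes that $i\infty$ is \emph{not} a Weierstrass point. In genus three the paper finds four levels ($N=109,151,179,282$) where $i\infty$ \emph{is} a Weierstrass point; for $N=109,151,179$ the gap sequence is $\{1,2,4\}$, so $g+1=4$ is a gap and no function of pole order exactly $4$ exists---the generators instead have pole orders $3$ and $5$. For $N=282$ the gaps are $\{1,2,5\}$ and the generators have pole orders $3$ and $4$. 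The theorem only asserts pole order \emph{at most} $g+2$, and the paper discovers the actual gap sequence empirically from the row-reduced echelon form rather than assuming it in advance. (Relatedly, for $N=510$ the naive echelon-form generators have half-integer coefficients and a further change of basis $v=y+x$, $w=y-x$ is needed, another wrinkle your outline does not anticipate.)
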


In all cases, the generators we compute have the minimal poles possible, as can be shown by
the Weierstrass gap theorem.  Finally, as an indication of the explicit nature of our results,
we compute a polynomial relation associated to the underlying projective curve.
For all moonshine groups of genus two and genus three, we deduce whether $i\infty$
is a Weierstrass point.

In brief, our analysis involves four steps.  First, we establish an integrality theorem which
proves that if a holomorphic modular function $f$ on $\Gamma_0(N)^+$ admits a $q$-expansion of
the form $f(z) = q^{-a} + \sum_{k>-a}c_kq^k$, then there is an explicitly computable $\kappa$
such that if $c_k \in \Z$ for $k \leq \kappa$ then $c_k \in \Z$ for all $k$.  Second, we prove
the analogue of Kronecker's limit formula, resulting in the construction of a non-vanishing
holomorphic modular form $\Delta_N$ on $\Gamma_0(N)^+$; we refer to $\Delta_N$ as the
Kronecker limit function.  Third, we construct spaces of holomorphic modular functions on
$\Gamma_0(N)^+$ by taking ratios of holomorphic modular forms whose numerators are holomorphic
Eisenstein series on $\Gamma_0(N)^+$ and whose denominator is a power of the Kronecker limit
function.  Finally, we employ considerable computer assistance in order to implement an algorithm,
based on the Weierstrass gap theorem and Gauss elimination, to derive generators of the function
fields from our spaces of holomorphic functions.  By computing the $q$-expansions of the
generators out to order $q^{\kappa}$, the integrality theorem from the first step completes our
main theorem.

\subsection{Additional aspects of the main theorem}\label{additional aspects}

There are ways in which one can construct generators of the function fields associated to the
moonshine groups, two of which we now describe.  However, the methodology does not provide all
the information which we developed in the proof of our main theorem.

Classically, one can use Galois theory and elementary aspects of the moonshine group in order to
form modular functions using Klein's $j$-invariant.  In particular, the functions
$$
\sum_{v\vert N}j(vz) \quad \text{and} \quad \prod_{v\vert N}j(vz)
$$
generate all holomorphic functions which are $\Gamma_0(N)^+$ invariant and,
from (\ref{j_expansion}), admit $q$-expansions with lead coefficients equal to one and all
other coefficients are integers.
However, the order of poles at $i\infty$ are much larger than $g+2$.

From modern arithmetic algebraic geometry, we have another approach toward our main result.  We
shall first present the argument, which was first given to us by an anonymous reader of a previous
draft of the present article, and then discuss how our main theorem goes beyond the given argument.

The modular curve $X_0(N)$ exists over $\Z$, is nodal, and its cusps are disjoint $\Z$-valued
points.  For all primes $p$ dividing $N$, the Atkin-Lehner involutions $w_p$ of $X_0(N)$ commute
with each other and generate a group $G_N$ of order $2^r$.  Let $X_0(N)^+ = X_0(N)/G_N$, and let
$c$ denote the unique cusp of $X_0(N)^+$.  The cusp can be shown to be smooth over $\Z$; moreover,
all the fibers of $X_0(N)^+$ over $\spec(\Z)$ are geometrically irreducible.  Therefore, the
complement $U$ of $\spec(\Z)$ in $X_0(N)^+$ is affine.  The coordinate ring $\rO_N(U)$ is a
finitely generated $\Z$-algebra, with an increasing filtration by sub-$\Z$-modules $\rO_N(U)_k$
consisting of the functions with a pole of order at most $k$ along the cusp $c$.  The successive
quotients $\rO_N(U)_k / \rO_N(U)_{k-1}$ are free $\Z$-modules, each with rank either zero or one.
The $k$ for which the rank is zero form the gap-sequence of $X_0(N)^+_\Q$, the generic fiber of
$X_0(N)^+$.  If $k$ is not a gap, there is a non-zero element $f_k \in \rO_N(U)_k$, unique up to
sign and addition of elements from $\rO_N(U)_{k-1}$.  The $q$-parameter is obtained by considering
the formal parameter at $c$, which is the projective limit of the sequence of quotients
$(\rO_N/I^n)(X_0(N)^+)$ where $I$ is the ideal of $c$.  The projective limit is naturally
isomorphic to the formal power series ring $\Z[[q]]$.  With all this, if $f$ is any element of
$\rO_N(U_\Q)$, then there is an integer $n$ such that $nf$ has $q$-expansion in $\Z((q))$.  In
other words, the function field of $X_0(N)^+_\Q$ can be generated by two elements whose
$q$-expansions are in $\Z((q))$.

Returning to our main result, we can discuss the information contained in our main theorem which
goes beyond the above general argument.  The above approach from arithmetic algebraic geometry
yields generators of the function field with poles of small order with integral $q$-expansion;
however, it was necessary to ``clear denominators'' by multiplying through by the integer $n$,
thus allowing for the possibility that the lead coefficient is not equal to one.  In the case
that $X_0(N)^+$ has genus one, then results which are well-known to experts in the field
(though there is no specific reference, we refer to \cite{Con03} for an exposition) may be used
to show that the generators satisfy an integral Weierstrass equation; see, in particular,
Definition 2.4, Theorem 2.8 and the proof of Corollary 2.9.  However, these arguments do not
apply to the genus two and genus three cases which we exhaustively analyzed.  Furthermore, we
found that our result holds in all cases when $N$ is square-free, independent of the structure of
the gap sequence at $i\infty$; see, \cite{Kon03}.  As we found, for genus two $i\infty$ was not a
Weierstrass point for any $N$; however, for genus three, there were various types of gap sequences
for different levels $N$.  Further details of these findings will be mentioned in later sections
in the present article and described in detail in \cite{JST13c}.

Finally, given all of the above discussion, one naturally is led to the following conjecture.

\begin{conjecture}\label{Conjecture}
For any square-free $N$, the function field associated to any positive genus $g$ moonshine group
$\Gamma_0(N)^+$ admits two generators whose $q$-expansions have integer coefficients after the
lead coefficient has been normalized to equal one.  Moreover, the orders of poles of the
generators at $i\infty$ are at most $g+2$.
\end{conjecture}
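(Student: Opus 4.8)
The plan is to split the conjecture into its two assertions — the pole-order bound $g+2$ and the integrality with unit leading coefficient — and to replace the finite, level-by-level computation behind Theorem~\ref{main theorem} by structural arguments uniform in $N$. The organizing remark is that for each fixed genus $g$ there are only finitely many square-free $N$ with $\Gamma_0(N)^+$ of genus $g$, so the conjecture for a single $g$ is again a finite verification of the present type; the genuine content lies in the passage to unbounded genus, and a proof must therefore produce generators whose construction and integrality do not degrade as $g\to\infty$.

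First I would treat the pole-order bound through the Weierstrass semigroup $H_N$ of non-gaps at $i\infty$. Among the integers $1,2,\dots,g+2$ at most $g$ are gaps, so $H_N\cap[1,g+2]$ always contains at least two elements. Writing $F_N:=\C(\Gamma_0(N)^+\backslash\h)$ for the function field, if two such non-gaps $a<b\le g+2$ are coprime, then functions $f,h$ holomorphic on $\Gamma_0(N)^+\backslash\h$ with poles at $i\infty$ of orders exactly $a$ and $b$ satisfy $[\,F_N:\C(f)\,]=a$ and $[\,F_N:\C(h)\,]=b$, whence $[\,F_N:\C(f,h)\,]\mid\gcd(a,b)=1$ and $f,h$ generate $F_N$ with poles $\le g+2$. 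Thus the bound is equivalent to the statement that $H_N\cap[1,g+2]$ contains two coprime elements. Its failure would force every pair of these non-gaps to share a common factor; the principal case, and the one realized by actual curves, is that all of them are even, so that $i\infty$ is a hyperelliptic Weierstrass point. The heart of this half of the conjecture is therefore to show that $i\infty$ is never such a point (nor exhibits the analogous higher pairing obstruction), uniformly in $N$. I would attack this by producing, from the building blocks developed here, a modular function with a pole of \emph{odd} order $\le g+2$ at $i\infty$: suitable ratios of holomorphic Eisenstein series on $\Gamma_0(N)^+$ to powers of the Kronecker limit function $\Delta_N$ have explicitly computable leading pole orders, and a single odd order in range breaks the all-even pattern.

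For integrality I would work in the integral model $X_0(N)^+$ over $\Z$ described in the introduction, where the successive quotients $\rO_N(U)_k/\rO_N(U)_{k-1}$ are free $\Z$-modules of rank $0$ or $1$. For a non-gap $k$, the isomorphism of the formal completion at the cusp with $\Z[[q]]$ gives a leading-coefficient map $\rO_N(U)_k/\rO_N(U)_{k-1}\to\Z$ that is injective with image $d_k\Z$ for some positive integer $d_k$. The arithmetic-geometry argument supplies integral $q$-expansions but leaves $d_k$ unknown; normalizing the leading coefficient to $1$ is exactly $d_k=1$, and it suffices to establish this for the two coprime pole orders selected above. Since $d_k$ can be divisible only by primes of bad reduction of $X_0(N)^+$, the question localizes at the primes dividing $N$ and, crucially, at $p=2$ arising from the quotient by the Atkin-Lehner group $G_N$ of order $2^r$. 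I would combine a local analysis of $X_0(N)^+$ at these primes — aiming to show the cusp remains a smooth $\Z$-point through which no common factor is forced into the filtration — with the integrality theorem of step one, whose explicit $\kappa$ reduces any residual check to finitely many coefficients.

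The principal obstacle is uniformity in $N$ as $g\to\infty$, and it enters at two points. On the geometric side one must exclude hyperelliptic-type behavior at $i\infty$ for \emph{all} square-free $N$ simultaneously; this cannot follow from a finite computation and instead requires controlling the leading pole orders of the Eisenstein-to-$\Delta_N$ ratios as the relevant weights and the bound $\kappa=\kappa(N)$ both grow with $N$. On the arithmetic side the danger concentrates at $p=2$: the quotient $X_0(N)\to X_0(N)^+$ by a group of order $2^r$ can introduce quotient singularities in the fibre over $2$, and it is precisely here that $d_k$ could acquire a factor of $2$. Establishing $d_k=1$ uniformly — equivalently, that the integral structure of the function field at the cusp is saturated over $\Z$ for every square-free $N$ — is the decisive difficulty, and it is the reason the statement must remain a conjecture rather than a corollary of the methods developed above.
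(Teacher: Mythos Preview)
The statement is labeled a \emph{Conjecture} in the paper and is not proved there; Theorem~\ref{main theorem} covers only $g\le 3$ and does so by explicit, level-by-level computation using the algorithm of Section~\ref{Generators of function fields} together with the integrality criterion of Section~\ref{integrality}. There is thus no proof in the paper to compare against, and your own text is not a proof either --- as you concede in the last sentence, both decisive steps remain open.

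That said, your decomposition is sound. The reduction of the pole bound to the existence of two \emph{coprime} non-gaps $\le g+2$ in the Weierstrass semigroup at $i\infty$ is correct, and the hyperelliptic obstruction you isolate is genuine: if $i\infty$ were a hyperelliptic Weierstrass point for some $N$ with $g\ge 2$, every non-gap $\le g+2$ would be even and no two functions with poles only at $i\infty$ of order $\le g+2$ could generate the field. Your proposal to defeat this by exhibiting an odd pole order $\le g+2$ from an Eisenstein-over-$\Delta_N$ ratio is not carried out, and nothing in the paper suggests it can be: every $F_b$ in a fixed $\s_M$ has the \emph{same} pole order $M\ell_N\sigma(N)/24$, so small odd orders arise only after linear combination, which is precisely the computation the paper cannot do uniformly in $N$. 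On the arithmetic side, your filtration argument is exactly the one already recorded in Section~\ref{additional aspects} and attributed there to an anonymous reader; the paper itself notes that it yields integral $q$-expansions only after clearing an unspecified denominator, so the entire content of the conjecture beyond that argument is your condition $d_k=1$, for which you offer no mechanism. You have correctly located the two hard points and correctly declined to claim a proof; what you have is a reasonable research outline, not a proof attempt that can be checked.
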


\subsection{Our method of proof}

Let us now describe our theoretical results and computational investigations in greater detail.

For any square-free integer $N$, the subset of $\SL(2,\R)$ defined by
$$
\Gamma_0(N)^+=\left\{ e^{-1/2} \begin{pmatrix} a & b \\ c & d \end{pmatrix} \in \SL(2,\R):
\quad ad-bc=e, \quad a,b,c,d,e\in\Z, \quad e\mid N,\ e\mid a,\ e\mid d,\ N\mid c \right\}
$$
is an arithmetic subgroup of $\SL(2,\R)$, which, as discussed in \cite{JST12},
we call a moonshine group.
As shown in \cite{Cum04}, there are precisely $44$ such moonshine groups which have genus zero
and $38$, $39$, and $31$ which have genus one, two, and three, respectively.  These groups of
genus up to three will form a considerable portion of the focus in this article.
There are two reasons for focusing on the groups $\Gamma_0(N)^+$ for square-free $N$.
First, for any positive integer $N$, the groups $\Gamma_0(N)^+$ are of moonshine-type
(see, e.g., Definition 1 from \cite{Ga06a}).  Second, should $N$ not be square-free, then there
exist genus zero groups $\Gamma_0(N)^+$, namely when $N=25$, $49$ and $50$, but those groups
correspond to "ghost" classes of the monster.  In summary, we are paying attention to the
known results from ``monstrous moonshine''.

For arbitrary square-free $N$, the discrete group $\Gamma_0(N)^+$ has one-cusp,
which we denote by $i\infty$.  Associated to the cusp of $\Gamma_0(N)^+$ one
has, from spectral theory and harmonic analysis, a well-defined non-holomorphic Eisenstein series
denoted by $\E_\infty(z,s)$.  The real analytic Eisenstein series
$\E_\infty(z,s)$ is defined for $z \in \h$ and $\Re(s)>1$ by
\begin{align}\label{parabol Eis defin}
\E_\infty(z,s) = \sum_{\gamma\in\Gamma_\infty(N)\backslash\Gamma_0(N)^+}\Im(\gamma z)^s,
\end{align}
where $\Gamma_\infty(N)$ is the parabolic subgroup of $\Gamma_0(N)^+$.  Our first
result is to determine the Fourier coefficients of $\E_\infty(z,s)$ in terms of
elementary arithmetic functions, from which one obtains the meromorphic continuation
of the real analytic Eisenstein series $\E_\infty(z,s)$ to all $s \in \C$.

As a corollary of these computations, it is immediate that $\E_\infty(z,s)$ has no pole in
the interval $(1/2,1)$.  Consequently, we prove that groups $\Gamma_0(N)^+$ have no residual
spectrum besides the obvious one at zero.

Using our explicit formulas for the Fourier coefficients of $\E_\infty(z,s)$,
we are able to study the special values at $s=1$ and $s=0$, which, of course, are related by
the functional equation for $\E_\infty(z,s)$.  As a result, we arrive at the
following generalization of Kronecker's limit formula.
For any square-free $N$ which has $r$ prime factors, the real analytic Eisenstein series
$\E_\infty(z,s)$ admits a Taylor series expansion of the form
$$
\E_\infty(z,s) = 1+ (\log\left(\sqrt[2^r]{\prod_{v \mid N} \abs{ \eta(vz)}^4}
\cdot \Im(z) \right))\cdot s + O(s^2), \text{ as } s\to0,
$$
where $\eta(z)$ is Dedekind's eta function associated to $\PSL(2,\Z)$.  From the
modularity of $\E_\infty(z,s)$, one concludes that
$\prod_{v \mid N} \abs{ \eta(vz)}^4 (\Im(z))^{2^r}$ is invariant with respect to the
action by $\Gamma_0(N)^+$.  Consequently, there exists a multiplicative
character $\eps_N(\gamma)$ on $\Gamma_0(N)^+$ such that one has the identity
\begin{align}\label{Dedekind_sums}
\prod_{v \mid N} \eta(v \gamma z) =\eps_N(\gamma)(cz+d)^{2^{r-1}}\prod_{v \mid N} \eta(v z)
\quad \text{for all $\gamma  = \begin{pmatrix} * & * \\ c & d \end{pmatrix} \in \Gamma_0(N)^+$.}
\end{align}
We study the order of the character, and we define $\ell_N$ to be the integer so that
$\eps_N^{\ell_N} = 1$.  A priori, it is not immediate that $\ell_N$ is finite for general $N$.
Classically, when $N=1$, the study of the character $\eps_1$ on $\PSL(2,\Z)$ is the beginning
of the theory of Dedekind sums; see \cite{Lang76}.  For general $N$, we prove that $\ell_N$ is
finite and, furthermore, can be evaluated by the expression
$$
\ell_N = 2^{1-r}\lcm\Big(4,\ 2^{r-1}\frac{24}{(24,\sigma(N))}\Big)
$$
where $\lcm(\cdot,\cdot)$ denotes the least common multiple function and $\sigma(N)$ stands for
the sum of divisors of $N$.

With the above notation, we define the Kronecker limit function $\Delta_N(z)$ associated to
$\Gamma_0(N)^+$ to be
\begin{align}\label{Kronecker_limit}
\Delta_N(z) = \left(\prod_{v \mid N} \eta(v z)\right)^{\ell_N},
\end{align}
and we let $k_N$ denote the weight of $\Delta_N$.  By combining \eqref{Dedekind_sums}
and \eqref{Kronecker_limit}, we get that $k_N=2^{r-1}\ell_N$.
In summary, $\Delta_N(z)$ is a non-vanishing, weight $k_N$
holomorphic modular form with respect to $\Gamma_0(N)^+$.

Analogous to the setting of $\PSL(2,\Z)$, one defines the holomorphic Eisenstein series
of even weight $k\geq 4$ associated to $\Gamma_0(N)^+$ by the series
\begin{align}\label{E k, p defin}
E_k^{(N)}(z)= \sum_{\gamma \in \Gamma_\infty(N) \backslash\Gamma_0(N)^+} (cz+d)^{-k}
\quad \text{where} \quad
\gamma = \begin{pmatrix} * & * \\ c & d \end{pmatrix}.
\end{align}
We show that one can express $E_{2m}^{(N)}$ in terms of $E_{2m}$, the holomorphic Eisenstein
series associated to $\PSL(2,\Z)$; namely, for $m\geq 2$ one has the relation
\begin{align}\label{holo_eisen_N}
E_{2m}^{(N)}(z)= \frac1{\sigma_m(N)} \sum_{v \mid N}v^m E_{2m}(vz).
\end{align}

With the above analysis, we are now able to construct holomorphic modular functions on the
space $\Gamma_0(N)^+\backslash\h$.  For any non-negative integer $M$, the function
\begin{align}\label{rational_function_b}
F_b(z) := \prod_\nu\left(E_{m_\nu}^{(N)}(z)\right)^{b_\nu}\Big/\big(\Delta_N(z)\big)^M
\quad \text{where} \quad
\sum_\nu b_\nu m_\nu = Mk_N \quad \text{and} \quad b = (b_1, \ldots)
\end{align}
is a holomorphic modular function on $\Gamma_0(N)^+\backslash\h$, meaning a weight zero modular
form with polynomial growth near $i\infty$.  The vector $b=(b_\nu)$ can be viewed as a weighted
partition of the integer $k_NM$ with weights $m=(m_\nu)$ formed by the weights, in the
sense of modular forms, of the holomorphic Eisenstein series.  For considerations to be
described below, we let $\s_M$ denote the set of all possible rational functions
defined in \eqref{rational_function_b} by varying the vectors $b$ and $m$ yet keeping $M$ fixed.
Trivially, $\s_0$ consists of the constant function $\{1\}$, which is convenient to include in
our computations.

Dedekind's eta function can be expressed by the well-known product formula, namely
$$
\eta^{24}(z)=q\prod_{n=1}^\infty(1-q^n)^{24}
\quad \text{where} \quad
q=e^{2\pi i z} \quad \text{with} \quad z\in\h,
$$
and the holomorphic Eisenstein series associated to $\PSL(2,\Z)$ admits the
$q$-expansion
\begin{align}\label{holo_eisen_one}
E_k(z)=1-\frac{2k}{B_k}\sum_{\nu=1}^\infty \sigma_{k-1}(\nu)q^\nu, \quad
\text{as $q \to 0$.}
\end{align}
where $B_k$ denotes the $k$-th Bernoulli number and $\sigma$ is the generalized
divisor function
$$
\sigma_{\alpha}(m)= \sum_{\delta \mid m} \delta^{\alpha}.
$$

It is immediate that each function $F_b$ in \eqref{rational_function_b} admits a $q$-expansion
with rational coefficients.  However, it is clear that such coefficients are certainly not
integers and, actually, can have very large numerators and denominators.  Indeed, when
combining \eqref{holo_eisen_N} and \eqref{holo_eisen_one}, one gets that
$$
E_{2m}^{(N)}(z) = \frac1{\sigma_m(N)} \sum_{v \mid N}v^m E_{2m}(vz)
= 1 - \frac{4m}{B_{2m}\sigma_m(N)}q + O(q^2), \quad
\text{as $q \to 0$.}
$$
which, evidentially, can have a large denominator when $m$ and $N$ are large.  In addition,
note that the function in \eqref{rational_function_b} is defined with a product of holomorphic
Eisenstein series in the numerators, so the rational coefficients in the $q$-expansion
of \eqref{rational_function_b} are even farther removed from being easily described.

With the above theoretical background material, we implement the algorithm described in
section~\ref{Algorithm} to find a set of generators for the function field associated to the
genus $g$ moonshine group $\Gamma_0(N)^+$.  In addition to the theoretical results outlined
above, we obtain the following results which are based on our computational investigations.
\begin{enumerate}
\item For all genus zero moonshine groups, the algorithm concludes successfully, thus yielding
the $q$-expansion of the Hauptmoduli.  In all such cases, the $q$-expansions have positive
integer coefficients as far as computed.  The computations were completed for all $44$ different
genus zero moonshine groups $\Gamma_0(N)^+$.
\item For all genus one moonshine groups, the algorithm concludes successfully, thus yielding
the $q$-expansions for two generators of the associated function fields.  In all such cases,
each $q$-expansion has integer coefficients as far as computed.  The computations were
completed for all $38$ different genus one moonshine groups $\Gamma_0(N)^+$.  The function whose
$q$-expansion begins with $q^{-2}$ is the Weierstrass $\wp$-function in the coordinate $z\in\h$
for the underlying elliptic curve.
\item For all $38$ different genus one moonshine groups, we computed a cubic relation satisfied
by the two generators of the function fields.
\item In addition, we consider all moonshine groups of genus two and three.  In every
instance, the algorithm concludes successfully, yielding generators for the function fields whose
$q$-expansions admit integer coefficients as far as computed.  Only for the function field
generators on $X_{510}$ an additional base change becomes necessary.
\item We extend all $q$-expansions out to order $q^\kappa$, where $\kappa$ is given in
Tables~\ref{tab:kappa genus zero} and~\ref{tab:kappa genus one} or evaluated according to
Remark~\ref{rem:kappa algorithm}, thus showing that the field generators have integer
$q$-expansions.
\end{enumerate}

The fact that all of the $q$-expansions which we uncovered have integer coefficients is
not at all obvious and leads us to believe there is deeper, so-far hidden, arithmetic structure
which perhaps can be described as ``higher genus moonshine''.

In some instances, the computations from our algorithm were elementary
and could have been completed without computer assistance.  For instance,
when $N=5$ or $N=6$, the first iteration of the algorithm used a set with
only two functions to conclude successfully.  However, as $N$ grew, the
complexity of the computations became quite large.  As an example, for $N=71$, which is genus
zero and appears in ``monstrous moonshine'', the smallest non-zero weight for a denominator
in \eqref{rational_function_b} was $4$, but we needed to consider all functions whose
numerators had weight up to $40$, resulting in $362$ functions whose largest pole had order
$120$.  The most computationally extensive genus one example was $N=79$ where the smallest
non-zero weight denominator in \eqref{rational_function_b} was 12, but we needed to consider
all functions whose numerator had weight up to $84$, resulting in $13,158$ functions whose
largest pole had order $280$.

As one can imagine, the data associated to the $q$-expansions we considered is massive.  In
some instances, we encountered rational numbers whose numerators and denominators each occupied
a whole printed page.  In addition, in the cases where the algorithm required
several iterations, the input data of $q$-expansions of all functions were stored in computer
files which if printed would occupy hundreds of thousands of pages.  As an example of the size
of the problem we considered, it was necessary to write computer programs to search the output
from the Gauss elimination to determine if all coefficients of all $q$-expansions were integers
since the output itself, if printed, would occupy thousands of pages.

The computational results are summarized in sections~\ref{genus zero},~\ref{genus one}
and~\ref{higher genus}.  In Table~\ref{tab:y & x genus one}, Table~\ref{tab:y & x genus two}, and
Table~\ref{tab:y & x genus three}, we list the $q$-expansions of the two generators of function
fields associated to each genus one, genus two, and genus three moonshine group.  As T.~Gannon's
comment suggests, the information summarized in those tables does not exist elsewhere.
In Table~\ref{pol:genus one}, Table~\ref{pol:genus two}, and Table~\ref{pol:genus three}, we
list the polynomial relations satisfied by the generators of Table~\ref{tab:y & x genus one},
Table~\ref{tab:y & x genus two}, and Table~\ref{tab:y & x genus three}, respectively.  The
stated results, in particular the $q$-expansions, were limited solely by space considerations;
a thorough documentation of our findings will be given in forthcoming articles.

We will supplement the dissemination of our study as summarized in the present article
by making available the computer programs as well as input
and output data supporting the statements we present here.  In other words:

\emph{We will make available all input and output information and computer programs
associated to the computational investigations undertaken in the present article.}

\subsection{Further studies}

As stated above, Klein's $j$-invariant can be written in terms of holomorphic Eisenstein series
associated to $\PSL(2,\Z)$.  By storing all information from the computations from Gauss
elimination, we obtain similar expressions for the Hauptmoduli for all genus zero
moonshine groups.  As one would imagine based upon the above discussion, some of the
expressions will be rather large.  In \cite{JST13a},
we will report of this investigation, which in many instances yields new relations for the
$j$-invariants and for holomorphic Eisenstein series themselves.  For the genus one groups,
the computations from Gauss elimination produce expressions for the Weierstrass $\wp$-function,
in the coordinate on $\h$, in terms of holomorphic Eisenstein series; these computations will
be presented in \cite{JST13b}.  Finally, in \cite{JST13c}, we will study higher genus examples,
and in some instances compute the projective equation of the underlying algebraic curve.

\subsection{Outline of the paper}

In section~\ref{background material}, we will establish notation and recall various background
material.  In section~\ref{integrality}, we prove that if a certain number of coefficients in the
$q$-expansions of the generators are integers, then all coefficients are integers.  In
section~\ref{parabolic Eisenstein}, we will compute the Fourier expansion of the non-holomorphic
Eisenstein series associated to $\Gamma_0(N)^+$.  The generalization of Kronecker's limit formula
for moonshine groups is proven in section~\ref{Kronecker limit}, including an investigation of its
weight and the order of the associated Dedekind sums.  In section~\ref{holomorphic Eisenstein}, we
relate the holomorphic Eisenstein series associated to $\Gamma_0(N)^+$ to holomorphic Eisenstein
series on $\PSL(2,\Z)$, as cited above.  Further details regarding the algorithm we develop and
implement are given in section~\ref{Generators of function fields}.  Results of our computational
investigations are given in section~\ref{genus zero} for genus zero, section~\ref{genus one}
for genus one, and section~\ref{higher genus} for genus two and genus three.  Finally, in
section~\ref{concluding remark}, we offer concluding remarks and discuss directions for future
study, most notably our forthcoming articles \cite{JST13a}, \cite{JST13b}, and \cite{JST13c}.

\subsection{Closing comment}

The quote we presented above from \cite{Ga06a} indicates that ``higher genus moonshine'' has
yet to have the input from which one can search for the type of mathematical clues that are found
in McKay's observation involving the coefficients of Klein's $j$-invariant or in Ogg's computation
of the levels of all genus zero moonshine groups and their appearance in the prime factorization
of the order of the Fischer-Griess monster $\M$.  It is our hope that someone will recognize
some patterns in the $q$-expansions we present in this article, as well as in \cite{JST13a},
\cite{JST13b}, and \cite{JST13c}, and then, perhaps, higher genus moonshine will manifest itself.

\section{Background material}\label{background material}

\subsection{Preliminary notation}

Throughout we will employ the standard notation for several arithmetic quantities and functions,
including:  The generalized divisor function $\sigma_a$, Bernoulli numbers $B_k$, the M\"obius
function $\mu$, and the Euler totient function $\varphi$, the Jacobi symbol
$\left(\frac{p}{d}\right)$, the greatest common divisor function $(\cdot,\cdot)$, and the least
common multiple function $\lcm(\cdot,\cdot)$.  Throughout the paper we denote by
$\{p_i\}$, $i=1,\ldots,r$, a set of distinct primes and by $N=p_1\cdots p_r$ a square-free,
positive integer.

The convention we employ for the Bernoulli numbers follows \cite{Zag08} which is slightly
different than \cite{Serre73} although, of course, numerical evaluations agree when following
either set of notation.  For a precise discussion, we refer to the footnote on page 90 of
\cite{Serre73}.

\subsection{Certain moonshine groups}

As stated above, $\h$ denotes the hyperbolic upper half plane with global variable $z \in \C$
with $z = x + iy$ and $y > 0$, and we set $q = e^{2\pi i z}=e(z)$.

The subset of $\SL(2,\R)$, defined by
\begin{align}\label{defn moons group}
\Gamma_0(N)^+=\left\{ e^{-1/2} \begin{pmatrix} a & b \\ c & d \end{pmatrix} \in \SL(2,\R):
\quad ad-bc=e, \quad a,b,c,d,e\in\Z, \quad e\mid N,\ e\mid a,\ e\mid d,\ N\mid c \right\}
\end{align}
is an arithmetic subgroup of $\SL(2,\R)$, called moonshine group.  We denote by
$\overline{\Gamma_0(N)^+} = \Gamma_0(N)^+ / \{\pm\Id\}$ the corresponding subgroup of $\PSL(2,\R)$.

Basic properties of $\Gamma_0(N)^+$, for square-free $N$ are derived in \cite{JST12} and
references therein.  In particular, we use that
the surface $X_N=\overline{\Gamma_0(N)^+}\backslash\h$ has exactly one cusp, which can
be taken to be at $i\infty$.  Let $g_N$ denote the genus of $X_N$.

\subsection{Function fields and modular forms}

The set of meromorphic functions on $X_N$ is a function field, meaning a degree one
transcendental extension of $\C$.  If $g_N=0$, then the function
field of $X_N$ is isomorphic to $\C(j)$ where $j$ is a single-valued meromorphic function
on $X_N$.  If $g_N>0$, then the function field of $X_N$
is generated by two elements which satisfy a polynomial relation.

A meromorphic function $f$ on $\h$ is a weight $2k$ meromorphic modular form if
we have the relation
$$
f(z) = (cz+d)^{-2k}f\left(\frac{az+b}{cz+d}\right)
\quad \text{for any} \quad
\gamma = \begin{pmatrix} a & b \\ c & d \end{pmatrix} \in \Gamma_0(N)^+.
$$
In other language, the differential $f(z)(\text{d}z)^k$ is $\Gamma_0(N)^+$ invariant.

The product, resp.\ quotient, of weight $k_1$ and weight $k_2$ meromorphic forms
is a weight $k_1+k_2$, resp.\ $k_1-k_2$, meromorphic form.  Since
$\big(\begin{smallmatrix}1 & 1 \\ 0 & 1 \end{smallmatrix}\big) \in \Gamma_0(N)^+$, each
holomorphic modular form admits a Laurent series expansion which, when setting
$q = e^{2\pi i z}$, can be written as
$$
f(z) = \sum_{m=-\infty}^{\infty} c_m q^m.
$$
As is common in the mathematical literature, we consider forms such that $c_m=0$ whenever
$m < m_0$ for some $m_0\in \Z$.  If $c_{m_0} \neq 0$, the function $f$ frequently will be
re-scaled so that $c_{m_0}=1$ when $c_m = 0$ for $m < m_0$; the constant $m_0$ is the
order of the pole of $f$ at $i\infty$.

If $X_N$ has genus zero, then by the Hauptmoduli $j_N$ for $X_N$ one means
the weight zero holomorphic form which is the generator of the function field on $X_N$.
If $g_N>0$, then we will use the notation $j_{1;N}$ and $j_{2;N}$ to denote
two generators of the function field.

Unfortunately, the notation of hyperbolic geometry writes the local coordinate on $\h$ as $x+iy$,
and the notation of the algebraic geometry of curves uses $x$ and $y$ to denote the generators
of the function field under consideration; see, for example, page 31 of \cite{Lang82}.  We will
follow both conventions and provide ample discussion in order to prevent confusion.

\section{Integrality of the coefficients in the $q$-expansion}\label{integrality}

In this section we prove that integrality of all coefficients in the $q$-expansion of the
Hauptmodul $j_N$ when $g_N=0$ and the generators $j_{1;N}$ and $j_{2;N}$ when $g_N>0$
can be deduced from integrality of a certain finite number of coefficients.  Also, our proof
yields an effective bound on the number of coefficients needed to test for
integrality.  First, we present a proof in the case
when genus is zero, followed by a proof for the higher genus setting which is a
generalization of the genus zero case.  The proof is based on the property of Hecke operators
and Atkin-Lehner involutions.  We begin with a simple lemma.

\begin{lemma}
For any prime $p$ which is relatively prime to $N$, let $T_p$ denote the unscaled Hecke
operator which acts on $\Gamma_0(N)$ invariant functions $f$,
$$
T_p(f)(z) = f(pz) + \sum_{b=0}^{p-1}f\left(\frac{z+b}{p}\right).
$$
If $f$ is a holomorphic modular form on $\Gamma_0(N)^+$, then $T_p(f)$ is also a
holomorphic modular form on $\Gamma_0(N)^+$.
\end{lemma}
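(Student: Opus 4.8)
The plan is to exploit the fact that $\Gamma_0(N)^+$ is generated by $\Gamma_0(N)$ together with the Atkin--Lehner involutions $w_e$, one for each divisor $e\mid N$; hence invariance of $T_p(f)$ under $\Gamma_0(N)^+$ reduces to two separate checks: invariance under $\Gamma_0(N)$ and invariance under each $w_e$. Holomorphy of $T_p(f)$ on $\h$ is automatic, since $T_p(f)$ is a finite sum of compositions of the holomorphic function $f$ with holomorphic self-maps of $\h$. Meromorphy at $i\infty$ is visible from the $q$-expansion: if $f(z)=\sum_{m\geq m_0}c_mq^m$, then $f(pz)$ contributes $\sum_{m\geq m_0}c_mq^{pm}$ while $\sum_{b=0}^{p-1}f\left(\frac{z+b}{p}\right)$ contributes $p\sum_{p\mid m,\ m\geq m_0}c_mq^{m/p}$, so $T_p(f)$ again has a $q$-expansion bounded below.

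For invariance under $\Gamma_0(N)$, I would use the standard coset description: when $(p,N)=1$, the matrices $\alpha_b=\begin{pmatrix}1 & b\\ 0 & p\end{pmatrix}$ for $0\leq b\leq p-1$ together with $\alpha_\infty=\begin{pmatrix}p & 0\\ 0 & 1\end{pmatrix}$ form a complete set of representatives for $\Gamma_0(N)\backslash\Gamma_0(N)\begin{pmatrix}1 & 0\\ 0 & p\end{pmatrix}\Gamma_0(N)$. For $\gamma\in\Gamma_0(N)$, right multiplication by $\gamma$ permutes these cosets, say $\alpha_i\gamma=\gamma_i\alpha_{\pi(i)}$ with $\gamma_i\in\Gamma_0(N)$; since $f$, being $\Gamma_0(N)^+$-invariant, is in particular weight-zero invariant under $\Gamma_0(N)$, one gets $T_p(f)(\gamma z)=\sum_i f(\alpha_i\gamma z)=\sum_i f(\gamma_i\alpha_{\pi(i)}z)=\sum_i f(\alpha_{\pi(i)}z)=T_p(f)(z)$.

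The heart of the argument is invariance under the Atkin--Lehner involutions. Here I would prove that $T_p$ commutes with $w_e$ whenever $(p,N)=1$ (hence $(p,e)=1$). Realizing $w_e$ as the weight-zero action of a matrix $W_e$ of determinant $e$ that normalizes $\Gamma_0(N)$, one checks that $\{W_e\alpha_i\}$ and $\{\alpha_iW_e\}$ are both complete systems of left-$\Gamma_0(N)$-coset representatives for the single double coset $\Gamma_0(N)\,W_e\begin{pmatrix}1 & 0\\ 0 & p\end{pmatrix}\,\Gamma_0(N)$; this uses only that $W_e$ normalizes $\Gamma_0(N)$ and that $(p,e)=1$, so that the ``$p$-part'' and the ``$e$-part'' of these matrices do not interfere. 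Consequently $T_p(w_e f)=\sum_i f(\alpha_iW_e z)$ rearranges to $\sum_i f(W_e\alpha_i z)=w_e(T_p f)$, and since $w_ef=f$ by hypothesis we conclude $w_e(T_pf)=T_pf$. Combined with the previous step and the generation of $\Gamma_0(N)^+$ by $\Gamma_0(N)$ and the $w_e$, this gives that $T_p(f)$ is a holomorphic modular form on $\Gamma_0(N)^+$.

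I expect the coset bookkeeping in the last step --- verifying that $\{W_e\alpha_i\}$ and $\{\alpha_iW_e\}$ represent the same double coset --- to be the one mildly delicate point; it is precisely the classical commutativity of Hecke operators with Atkin--Lehner involutions away from the level, so an alternative is simply to invoke that known fact and deduce the lemma from it in two lines via $w_e(T_pf)=T_p(w_ef)=T_pf$.
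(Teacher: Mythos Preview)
Your proposal is correct and follows essentially the same route as the paper's proof: reduce to $\Gamma_0(N)$-invariance plus commutation of $T_p$ with the Atkin--Lehner involutions. The paper dispatches the latter in one line by citing Lemma~11 of Atkin--Lehner, whereas you sketch the coset-comparison proof of that commutation and then note the citation as an alternative; either way the content is the same.
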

\begin{proof}
The form $f$ is $\Gamma_0(N)^+$ invariant, hence $f$ is $\Gamma_0(N)$ invariant and
so is $T_p(f)$.  By Lemma 11 of \cite{AtLeh70}, if $W$ is any
coset representative of the quotient group $\Gamma_0(N)\backslash \Gamma_0(N)^+$, then
$T_p(f)$ is also $W$ invariant since $f$ is $W$ invariant.  Therefore, $T_p(f)$ is a
$\Gamma_0(N)^+$ invariant holomorphic form.
\end{proof}

\begin{thm}\label{genus_zero_integrality}
Let $j_N$ be the Hauptmodul for a genus zero moonshine group $\Gamma_0(N)^+$.  Let $p_1$
and $p_2$ with $p_2 > p_1$ be distinct primes which are relatively prime to $N$.  If the
$q$-expansion of $j_N$ contains integer coefficients out to order $q^\kappa$ for some
$\kappa \geq (p_2/(p_2-1))\cdot (p_1p_2)^2$, then all further coefficients in the
$q$-expansion of $j_N$ are integers.
\end{thm}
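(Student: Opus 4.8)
The plan is to exploit the fact that the Hauptmodul $j_N$ is, up to an additive and multiplicative normalization, the unique weight-zero holomorphic modular function on $\Gamma_0(N)^+$ with a simple pole at $i\infty$, together with the stability of the space of such functions under the Hecke operator $T_p$ established in the preceding lemma. Write $j_N(z) = q^{-1} + \sum_{k \geq 0} c_k q^k$. For a prime $p$ coprime to $N$, the function $T_p(j_N)$ is again a holomorphic modular function on $\Gamma_0(N)^+$, and from the defining formula its $q$-expansion has a pole of order exactly $p$ at $i\infty$ (coming from the term $j_N(pz)$, whose leading term is $q^{-p}$). Since the function field is $\C(j_N)$ and the space of holomorphic modular functions with pole order at most $p$ at the single cusp is spanned by $1, j_N, j_N^2, \ldots, j_N^p$, we can write
\begin{equation}\label{Tp_polynomial}
T_p(j_N) = j_N^p + \sum_{i=0}^{p-1} a_{p,i}\, j_N^i
\end{equation}
for some constants $a_{p,i} \in \C$. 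Comparing leading coefficients forces the coefficient of $j_N^p$ to be $1$.

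Next I would pin down the constants $a_{p,i}$ by matching the principal parts (the coefficients of $q^{-m}$ for $m = 1, \ldots, p$) and the constant term on both sides of \eqref{Tp_polynomial}. The left-hand side's principal part is determined entirely by $j_N(pz) = q^{-p} + O(q^0)$ together with the finite sum $\sum_{b=0}^{p-1} j_N((z+b)/p)$, whose negative-power contribution comes from the $q^{-1/p}$-type terms that recombine: in fact $\sum_{b} e^{-2\pi i (z+b)/p} = 0$ unless $p \mid 1$, so the only principal-part contribution from that sum is nil, and more generally $\sum_b e^{2\pi i n(z+b)/p}$ vanishes unless $p \mid n$. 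Hence the principal part of $T_p(j_N)$ is simply $q^{-p}$, with all intermediate negative powers absent. On the right-hand side, the principal parts of $j_N^p, j_N^{p-1}, \ldots, j_N$ are polynomials in the $c_k$ with integer coefficients. Solving the resulting triangular linear system (top-down, starting from the coefficient of $q^{-p}$) expresses each $a_{p,i}$ as a polynomial in $c_0, c_1, \ldots, c_{p-1}$ with integer coefficients. In particular, \textbf{if $c_0, \ldots, c_{p-1} \in \Z$ then all $a_{p,i} \in \Z$}, and I would want to observe the sharper book-keeping that $a_{p,i}$ depends only on $c_0, \ldots, c_{p-1-?}$ in a way that will feed the bound; the precise index range is what the hypothesis $\kappa \geq (p_2/(p_2-1))(p_1 p_2)^2$ is calibrated to.

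With \eqref{Tp_polynomial} in hand and $a_{p,i} \in \Z$, I would run the bootstrap. Expanding $T_p(j_N)$ in powers of $q$: the $q^m$-coefficient of $j_N(pz)$ is $c_{m/p}$ when $p \mid m$ and $0$ otherwise, while the $q^m$-coefficient of $\sum_b j_N((z+b)/p)$ is $p\, c_{pm}$ (again by the same orthogonality of additive characters mod $p$, only the terms whose exponent is divisible by $p$ survive, picking up a factor $p$). Therefore the $q^m$-coefficient of the left side of \eqref{Tp_polynomial} is $p\, c_{pm} + (\text{term involving } c_{m/p})$, and comparing with the right side, whose $q^m$-coefficient is an integer polynomial in $c_0, \ldots, c_{m+p}$ wait — in $c_0, \ldots, c_{?}$ (the coefficient of $q^m$ in $j_N^i$ involves $c_k$ for $k \leq m+i-1 \leq m+p-1$), one gets a recursion of the shape
\begin{equation}\label{recursion_shape}
p\, c_{pm} = (\text{integer polynomial in } c_k,\ k \leq m+p-1) \ - \ (\text{lower-index } c_{m/p}\text{-term}).
\end{equation}
Using two primes $p_1 < p_2$ one removes the factor-of-$p$ ambiguity: an induction on the index $n$ shows that if all $c_k$ with $k \leq n-1$ are integers then $c_n$ is a rational with denominator dividing a power of $p_1$ (from the $p_1$-recursion, if $p_1 \mid n$ it also divides $p_2$, contradiction unless the denominator is $1$) — i.e. one plays the two recursions against each other via a $\gcd(p_1^a, p_2^b) = 1$ argument. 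The base case of the induction needs integrality up to whatever index makes both recursions available and makes the polynomials on the right refer only to already-known coefficients; tracking that the relevant indices are all $\leq (p_2/(p_2-1))(p_1 p_2)^2$ is exactly the role of $\kappa$.

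The main obstacle is the combinatorial/arithmetic book-keeping in the last step: making the two interlocking recursions \eqref{recursion_shape} yield genuine integrality (not just "denominator a prime power") and doing so uniformly from a \emph{single} explicit threshold $\kappa$. One must be careful that the $p$-recursion only controls $c_{pm}$, so to reach an arbitrary index $n$ one either writes $n = pm$ or uses the other prime; the interaction of "divisible by $p_1$" and "divisible by $p_2$" with the shift $m \mapsto m+p-1$ in the polynomial arguments is where the quadratic-in-$p_1 p_2$ size of $\kappa$ comes from, and where I expect the proof to require the most care. Everything else — the structure of \eqref{Tp_polynomial}, the character-orthogonality computations, the triangularity of the linear system for the $a_{p,i}$ — is routine once set up. The genus $> 0$ case, promised afterwards, should follow the same template with $j_N^i$ replaced by monomials in the two field generators spanning the relevant pole-order filtration.
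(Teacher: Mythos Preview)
Your setup is correct through equation~\eqref{Tp_polynomial}: the Hecke operator $T_p$ sends $j_N$ to a monic degree-$p$ polynomial in $j_N$ with integer coefficients (once enough initial $c_k$ are integers). The gap is in the bootstrap. Your recursion~\eqref{recursion_shape} extracts the coefficient of $q^m$ from both sides and produces an identity of the form
\[
p\,c_{pm} \;=\; \text{(integer polynomial in $c_k$ with $k \leq m+p-1$)},
\]
valid once $m \geq 2$ so that $m+p-1 < pm$. But this only ever determines $c_n$ when $n$ is a \emph{multiple of $p$}. Using the two recursions for $p_1$ and $p_2$ you can handle indices divisible by $p_1$ or by $p_2$, but an index $n$ coprime to $p_1 p_2$ (and there are infinitely many such $n$) is simply never the ``new'' coefficient on the left-hand side of either identity. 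Your remark ``to reach an arbitrary index $n$ one either writes $n = pm$ or uses the other prime'' is precisely where the argument breaks: neither option is available when $\gcd(n, p_1 p_2) = 1$.

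The paper's proof repairs this by applying $T_{p_i}$ not to $j_N$ but to $j_N^{\,r_i}$, where $r_i$ is chosen (depending on the target index $m$) so that $m+1-r_i \equiv 0 \pmod{p_i}$. In $j_N^{\,r_i}$ the coefficient $c_m$ first shows up (linearly, with factor $r_i$) in the $q^{m+1-r_i}$-term, and because that exponent is now divisible by $p_i$, it is picked up by $T_{p_i}$ at the index $(m+1-r_i)/p_i$. Comparing with the polynomial expression $T_{p_i}(j_N^{\,r_i}) = P_{r_i,p_i}(j_N)$ then shows $r_i p_i\, c_m \in \Z$ (modulo lower-index data). A further arithmetic step chooses $r_2$ so that $(r_1 p_1, r_2 p_2)=1$, whence $c_m \in \Z$. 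The freedom to vary $r$ is exactly what lets every index $m$ be reached, and it is also the source of the bound $\kappa \geq (p_2/(p_2-1))(p_1 p_2)^2$, since one must allow $r_2$ as large as $r_1 p_1 p_2 \leq p_1^2 p_2$. Your sketch, using only $r=1$, cannot produce this reach and so cannot close the induction.
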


\begin{proof}
Let $c_k$ denote the coefficient of $q^k$ in the $q$-expansion of $j_N$.  Let $m > \kappa$ be
arbitrary integer.  Assume that $c_k$ is an integer for $k < m$, and let us now prove that
$c_m$ is an integer.  To do so, let us consider the $\Gamma_0(N)^+$ invariant functions
$$
T_{p_1}(j_N^{\ r_1})(z) \quad \text{and} \quad T_{p_2}(j_N^{\ r_2})(z)
$$
for specific integers $r_1$ and $r_2$ depending on $m$ which are chosen as follows.
First, take $r_1$ to be the unique integer such that
$$
r_1 \equiv (m+1) \mod p_1 \quad \text{and} \quad 1 \leq r_1 \leq p_1.
$$
We now wish to choose $r_2$ to be the smallest positive integer such that
$$
r_2 \equiv (m+1) \mod p_2 \quad \text{with} \quad r_2 \geq 1 \quad \text{and} \quad
(r_1p_1,r_2p_2) = 1.
$$
Let us argue in general the existence of $r_2$ and establish a bound in terms of
$p_1$ and $p_2$, noting that for any specific example one may be able to choose
$r_2$ much smaller than determined by the bound below.

Since $p_1$ and $p_2$ are primes and $1 \leq r_1 \leq p_1 < p_2$, we have
that $(p_2,r_1p_1) = 1$.  By applying the Euclidean algorithm and Bezout's identity,
there exists an $r\geq 1$ such that $rp_2 \equiv 1 \mod r_1p_1$.  Choose $A$ be the
smallest positive integer such that $A \equiv r^2 - (m+1)r \mod r_1p_1$ and
$$
r_2 = m+1 + Ap_2 \quad \text{and} \quad 1 \leq r_2 \leq r_1p_1p_2.
$$
Clearly, $r_2 \equiv (m+1)\mod p_2$ and, furthermore
\begin{align*}
r_2p_2 & \equiv (m+1)p_2 + Ap_2^2 \mod r_1p_1 \\
& \equiv (m+1)p_2 + (r^2 - (m+1)r)p_2^2 \mod r_1p_1 \\
& \equiv 1 \mod r_1p_1.
\end{align*}
In particular, $r_1p_1$ and $r_2p_2$ are relatively prime,
and we have the bounds $r_1p_1\leq p_1^2$ and $r_2p_2 \leq p_1^2p_2^2$.

With the above choices of $r_1$ and $r_2$, set $f_1=j_N^{\ r_1}$ and
$f_2=j_N^{\ r_2}$.  The function $f_1$ is a $\Gamma_0(N)^+$ invariant modular
function with pole of order $r_1$ at $i\infty$.  Therefore, there is a
polynomial $P_{r_1,p_1}(x)$ of degree $r_1p_1$ such that
\begin{align}\label{Hecke_equation_zero}
T_{p_1}(f_1) = P_{r_1,p_1}(j_N).
\end{align}
If we write the $q$-expansion of $f_1$ as
$$
f_1(z) = \sum_{k \geq -r_1}b_kq^k,
$$
then
$$
T_{p_1}(f_1)(z) = \sum_{k \geq -r_1}b_kq^{kp_1} +
\sum_{\substack{k \geq -r_1 \\ k \equiv 0 \mod p_1}}p_1b_kq^{k/p_1}.
$$
The coefficients $b_k$ are determined by the binomial theorem and the coefficients
of $j_N$, namely by
$$
f_1(z) = (j_N(z))^{r_1} = \left(q^{-1} + c_1q^1 + \ldots + c_kq^k + \ldots \right)^{r_1}.
$$
By assumption, $c_1,\ldots, c_{m-1}$ are integers, and
$m-1 \geq (p_2/(p_2-1))(p_1p_2)^2 > r_1$.
From this, we conclude that $b_{-r_1}, \ldots, b_{m-r_1}$ are integers.  In particular,
all coefficients of $T_{p_1}(f_1)(z)$ out to order $q^0$ are integers, from which we can
compute the coefficients of $P_{r_1,p_1}$ and conclude that the polynomial $P_{r_1,p_1}$
has integer coefficients, in particular the lead coefficient is one.

Let us determine the first appearance of the coefficient $c_m$ in $T_{p_1}(f_1)$.  First,
the smallest $k$ where $c_m$ appears in the formula for $b_k$ is when $k=m+1-r_1$, and
then we have that
$$
b_{m+1-r_1}
= r_1c_m + \text{an integer determined by binomial coefficients and $c_k$ for $k < m$.}
$$
By our choice of $r_1$, the index
$m+1-r_1$ is positive and divisible by $p_1$.  As a result, the first appearance
of $c_m$ in the expansion of $T_{p_1}(f_1)$ is within the coefficient of $q^d$ where
$d = (m+1-r_1)/p_1$.  Going further, we have that the coefficient of $q^d$ for
$d = (m+1-r_1)/p_1$ in the expansion of $T_{p_1}(f_1)$ is of the form $p_1r_1c_m$
plus an integer which can be determined by binomial coefficients and $c_k$ for $k < m$.

Let us now determine the first appearance of $c_m$ in $P_{r_1,p_1}(j_N)$.  Again,
by the binomial theorem, we have that $c_m$ first appears as a coefficient of $q^e$ where
$e = m-r_1p_1+1$.  By the choice of $m$ and $r_1$, since $p_1<p_2$, one has that
$$
m> p_1p_2 > r_1(p_1 +1)-1 = \frac{r_1p_1^2 - r_1-p_1+1}{p_1-1}.
$$
The above inequality is equivalent to
$$
\frac{m+1-r_1}{p_1}<m-r_1p_1 +1;
$$
in other words, $d<e$.  As a consequence, we have that the coefficient of $q^d$ in
$P_{r_1,p_1}(j_N)$ can be written as a polynomial expression involving binomial
coefficients and $c_k$ for $k < m$.  By induction, the coefficient of $q^d$ in
$P_{r_1,p_1}(j_N)$ is an integer.

In summary, by equating the coefficients of $q^d$ where $d=(m+1-r_1)/p_1$ in the formula
\eqref{Hecke_equation_zero}, on the left-hand-side we get an expression of the formula
$r_1p_1c_m$ plus an integer, and on the right-hand-side we get an integer.  Therefore,
$c_m$ is a rational number whose denominator is a divisor of $r_1p_1$.

Let us now consider $T_{p_2}(j_N^{\ r_2})$.  Since $m+1-r_2$ is divisible
by $p_2$, we consider the coefficient of $q^{d'}$ where $d' = (m+1-r_2)/p_2$.  By the
choice of $m$ and $r_2$, recalling that $r_2 \leq p_1^2 p_2$, we have
$$
m \geq \frac{p_2 ^2}{p_2-1} p_1^2 p_2 > \frac{p_2^2-1}{p_2-1}r_2 > r_2(p_2+1)-1
$$
and this is equivalent to
$$
\frac{m+1-r_2}{p_2}<m-r_2p_2 +1,
$$
or $d'<e'=m-r_2p_2 +1$.  With all this, we conclude, analogously as in the first case that
$c_m$ is a rational number whose denominator is a divisor of $r_2p_2$.
However, $r_1p_1$ and $r_2p_2$ are relatively prime, hence $c_m$ is an integer.

By induction on $m$, the proof of the theorem is complete.
\end{proof}

\begin{thm}\label{genus_notzero_integrality}
Let $\Gamma_0(N)^+$ have genus greater than zero, and assume there exists two holomorphic
modular functions $j_{1;N}$ and $j_{2;N}$ which generate the function field associated to
$\Gamma_0(N)^+$.  Furthermore, assume that the $q$-expansions of $j_{1;N}$ and $j_{2;N}$ are
normalized in the form
$$
j_{1;N}(z) = q^{-a_1} + O\left(q^{-a_1+1}\right) \quad \text{and} \quad
j_{2;N}(z) = q^{-a_2} + O\left(q^{-a_2+1}\right) \quad \text{with} \quad a_1 \leq a_2.
$$
Let $p_1$ and $p_2$ with $p_2 > p_1$ be distinct primes which are relatively prime to
$a_1a_2N$.  Assume the $q$-expansions of $j_{1;N}$ and $j_{2;N}$ contain integer coefficients out
to order $q^\kappa$ with $\kappa \geq a_2(p_2/(p_2-1))\cdot (p_1p_2)^2$.  Then all further
coefficients in the $q$-expansions of $j_{1;N}$ and $j_{2;N}$ are integers.
\end{thm}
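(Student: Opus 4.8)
The plan is to run the argument of Theorem~\ref{genus_zero_integrality} twice in succession: first for the generator $j_{1;N}$ of smaller pole order $a_1$, and then, using that conclusion, for $j_{2;N}$. Fix $m>\kappa$ and assume, as inductive hypothesis, that all coefficients $c_k^{(1)}$ of $j_{1;N}$ and $c_k^{(2)}$ of $j_{2;N}$ with $k<m$ are integers; the base case is the standing hypothesis that these coefficients are integral out to order $q^{\kappa}$. I would first deduce $c_m^{(1)}\in\Z$, then $c_m^{(2)}\in\Z$, and finish by induction on $m$.

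To obtain $c_m^{(1)}$, consider the holomorphic modular functions $T_{p_1}(j_{1;N}^{\,r_1})$ and $T_{p_2}(j_{1;N}^{\,r_2})$, which are again $\Gamma_0(N)^+$-invariant by the Lemma. Here $r_i$ is the genus-zero exponent with the defining congruence rescaled by $a_1$: take $(r_i-1)a_1\equiv m\pmod{p_i}$, which is solvable because $\gcd(p_i,a_1)=1$ — this is why $p_1,p_2$ are required coprime to $a_1a_2$ — with $r_1\in[1,p_1]$ and $r_2$ chosen, exactly as in Theorem~\ref{genus_zero_integrality}, so that in addition $(r_1p_1,r_2p_2)=1$ and $r_2\le p_1^2p_2$. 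Each $T_{p_i}(j_{1;N}^{\,r_i})$ has a pole only at $i\infty$, of the small order $a_1r_ip_i$, and — by the binomial theorem applied to the $q$-expansion of $j_{1;N}$ — its principal part together with all coefficients out to order $q^{0}$ is given by the $c_k^{(1)}$ with $k$ well below $m$, hence by integers. Writing $T_{p_i}(j_{1;N}^{\,r_i})=P_i(j_{1;N},j_{2;N})$ for a polynomial $P_i$ (the delicate step, discussed below), one reads $P_i$ off from these coefficients as in the genus-zero case and finds $P_i\in\Z[X,Y]$. Now $c_m^{(1)}$ first enters $j_{1;N}^{\,r_i}$ at order $q^{m-(r_i-1)a_1}$ with coefficient $r_ic_m^{(1)}$ plus an integer, and the congruence on $r_i$ was arranged so that $p_i\mid m-(r_i-1)a_1$; applying $T_{p_i}$ pulls this contribution down to order $q^{d_i}$ with $d_i=(m-(r_i-1)a_1)/p_i$, where it appears as $p_ir_ic_m^{(1)}$ plus an integer. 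On the other side, since $T_{p_i}(j_{1;N}^{\,r_i})$ has pole order $a_1r_ip_i$, neither $c_m^{(1)}$ nor $c_m^{(2)}$ can enter $P_i(j_{1;N},j_{2;N})$ before order $q^{e_i}$ with $e_i=m-a_1r_ip_i+a_1$ — here $a_2\ge a_1$ is used for $c_m^{(2)}$. A short computation shows that $\kappa\ge a_2\cdot\frac{p_2}{p_2-1}\cdot(p_1p_2)^2$, with the bounds on $r_i$, forces $d_i<e_i$ (and the analogous inequalities for $j_{2;N}$ below); so the coefficient of $q^{d_i}$ on the polynomial side is an integral polynomial in the $c_k^{(1)},c_k^{(2)}$ with $k<m$, hence an integer. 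Equating coefficients of $q^{d_i}$ gives $p_ir_ic_m^{(1)}\in\Z$, i.e., the denominator of $c_m^{(1)}$ divides $p_ir_i$; since $(p_1r_1,p_2r_2)=1$, we get $c_m^{(1)}\in\Z$. One then repeats this with $T_{p_i}(j_{2;N}^{\,s_i})$, where $(s_i-1)a_2\equiv m\pmod{p_i}$ (solvable since $\gcd(p_i,a_2)=1$) and $(s_1p_1,s_2p_2)=1$, to conclude $c_m^{(2)}\in\Z$: now $c_m^{(1)}$ may reappear on the polynomial side, but it is already known to be an integer.

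The step I expect to be the real obstacle is the representation $T_{p_i}(j_{1;N}^{\,r_i})=P_i(j_{1;N},j_{2;N})$ — and its analogue for $j_{2;N}^{\,s_i}$ — with $P_i$ an integral polynomial of controlled degree. Unlike genus zero, in positive genus the ring of $\Gamma_0(N)^+$-invariant holomorphic functions with a pole only at $i\infty$ is in general \emph{not} generated as a $\C$-algebra by two elements: already for genus two with $j_{1;N},j_{2;N}$ of minimal pole orders, that ring contains a function whose pole order is not a non-negative combination of $a_1$ and $a_2$. One must therefore argue either that a bounded power of $j_{1;N}$ times $T_{p_i}(j_{1;N}^{\,r_i})$ lies in $\C[j_{1;N},j_{2;N}]$ without introducing denominators, or that one can work through the monic polynomial relation satisfied by $j_{2;N}$ over $\C[j_{1;N}]$ — which exists because $j_{2;N}$ is holomorphic wherever $j_{1;N}$ is — together with a bound on its conductor, so that a fixed polynomial in $j_{1;N}$ times any such Hecke image lands in $\C[j_{1;N},j_{2;N}]$. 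Once a polynomial representation with integral coefficients is secured, the remaining numerology — the inequalities $d_i<e_i$ and the Bézout construction of the second exponent coprime to the first — runs exactly as in Theorem~\ref{genus_zero_integrality}, scaled by the pole orders, which is where the factor $a_2$ in the lower bound on $\kappa$ enters.
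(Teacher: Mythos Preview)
Your approach is the paper's: apply $T_{p_i}$ to $j_{l;N}^{\,r_i}$, write the result as $j_{l;N}^{\,r_ip_i}+Q_{i,l}(j_{1;N},j_{2;N})$ for a two-variable polynomial, read the coefficients of $Q_{i,l}$ inductively from the principal part to show them integral, then compare the first appearance of $c_{l;m}$ on the two sides and conclude via $(r_1p_1,r_2p_2)=1$. The only structural differences are that the paper treats $l=1,2$ in parallel rather than sequentially, and that it fixes the non-uniqueness of $Q_{i,l}$ by a canonical choice: among monomials $j_1^{n_1}j_2^{n_2}$ of a given pole order $a_1n_1+a_2n_2$, keep only the one with minimal $n_1$. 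With that normalisation the coefficients $b_{n_1,n_2}$ are determined one pole order at a time from the non-positive $q$-coefficients, and integrality follows.

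Regarding the step you flag as the real obstacle, the paper does precisely what you fear: it \emph{asserts} that such a polynomial $Q_{i,l}$ exists, citing only that $j_{1;N},j_{2;N}$ generate the function field and that $T_{p_i}(j_{l;N}^{\,r_i})-j_{l;N}^{\,r_ip_i}$ has pole order below $a_lr_ip_i$. The canonical choice addresses uniqueness, not existence. Your objection is well taken and is not resolved in the paper's argument: for genus two with $(a_1,a_2)=(3,4)$ the numerical semigroup $\langle 3,4\rangle$ omits $5$, which is a non-gap, so $\C[j_{1;N},j_{2;N}]$ is a proper subring of the ring of functions regular off the cusp (equivalently, the degree-four affine plane model has a singular point, since the projective model is smooth at its unique point at infinity yet has genus two, not three). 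Your proposed remedies --- multiplying through by a bounded power of $j_{1;N}$ to force the Hecke image into $\C[j_{1;N},j_{2;N}]$, or controlling the conductor of $\C[j_{1;N},j_{2;N}]$ inside its integral closure --- are the natural ways to make the argument airtight; the paper does not take that extra step.
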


\begin{proof}
The argument is very similar to the proof of Theorem~\ref{genus_zero_integrality}.  Let $c_{l;k}$
denote the coefficient of $q^k$ in the $q$-expansion of $j_{l;N}$, $l\in \{1,2\}$.  Assume that
$c_{l;k}$ is an integer for $l\in \{1,2\}$ and $k < m$, and let us prove that $c_{1;m}$ and
$c_{2;m}$ are integers.

For $i,l \in \{1,2\}$, we study the expression
\begin{align}\label{T_p in two variables}
T_{p_i}(j_{l;N}^{\ r_i})(z) = (j_{l;N}(z))^{r_i p_i} + Q_{i,l}(j_{1;N},j_{2;N}).
\end{align}
for certain polynomials $Q_{i,l}$ of two variables.  The left-hand-side of the above equation has
a pole at $i\infty$ of order $a_lp_ir_i$.  Hence
$$
Q_{i,l}(x,y)=\sum_{0\leq a_1n_1+a_2n_2 < a_lp_ir_i} b_{i,l;n_1,n_2}x^{n_1}y^{n_2}
\quad \text{for $i,l\in\{1,2\}$}.
$$
The existence of $Q_{i,l}(x,y)$ follows from the assumption that $j_{1;N}$ and $j_{2;N}$ generate
the function field associated to $\Gamma_0(N)^+$ and the observation that
$T_{p_i}(j_{l;N}^{\ r_i}) - (j_{l;N})^{r_ip_i}$ is $\Gamma_0(N)^+$ invariant and has a pole of order
less than $a_lr_ip_i$.

Of course, the polynomials $Q_{i,l}$ are not unique since $j_{1;N}$ and $j_{2;N}$ satisfy a
polynomial relation.  This does not matter.  We introduce the following canonical choice in order
to uniquely determine the polynomials.  Consider the coefficient $b_{i,l;n_1,n_2}$.  If there exist
non-negative integers $n'_1$ and $n'_2$ such that $a_1n'_1+a_2n'_2 = a_1n_1+a_2n_2$ with
$n'_1 < n_1$ then we set $b_{i,l;n_1,n_2}$ equal to zero.

Integrality of coefficients of $j_{1;N}$ and $j_{2;N}$ out to order $q^{\kappa}$ with
$\kappa \geq a_2(p_2/(p_2-1))\cdot (p_1p_2)^2$ implies that all coefficients of
$T_{p_i}(j_{l;N}^{\ r_i})(z)$ out to order $q^0$ are integers.  The coefficient $b_{i,l;n_1,n_2}$
of the polynomial $Q_{i,l}(x,y)$ first appears on the right-hand-side of
\eqref{T_p in two variables} as a coefficient multiplying $q^{-(n_1a_1+n_2a_2)}$.  The canonical
choice of coefficients enables us to deduce, inductively in the degree ranging from $-a_lr_ip_i+1$
to zero that all coefficients $b_{i,l;n_1,n_2}$ are integers.

Having established that the coefficients of the polynomials $Q_{i,l}$ are integers, now we wish
to determine two values $r_1$ and $r_2$ so that the coefficient $c_{1;m}$ first appears in
$T_{p_i}(j_{1;N}^{\ r_i})(z)$ as a factor of $q^d$ for the smallest possible $d$, which leads
to determining $r_i$ from the equation $m-(r_i-1)a_1 \equiv 0 \mod p_i$.  Such a solution
exists provided $p_1$ and $p_2$ are relatively prime to $a_1$.  Without loss of generality, we
may assume that $1 \leq r_1 \leq p_1$.  As in the proof of
Theorem~\ref{genus_zero_integrality}, we impose the further condition that
$(r_1p_1, r_2p_2) = 1$.  For at least one value of $r_2$ in the range
$1 \leq r_2 \leq r_1p_1p_2$, we have that $m-(r_2-1)a_1 \equiv 0 \mod p_2$ and
$r_2p_2 \equiv 1 \mod r_1p_1$, so, in particular, $(r_1p_1,r_2p_2) = 1$.

With the above choices of $r_1$, we determine the first appearance of $c_{1;m}$ in the equation
$$
T_{p_1}(j_{1;N}^{\ r_1})(z) = (j_{1;N}(z))^{r_1p_1} + Q_{1,1}(j_{1;N},j_{2;N}).
$$
On the left-hand-side, $c_{1;m}$ appears as a coefficient of $q^d$ where
$d = (m-(r_1-1)a_1)/p_1$, and, in fact, the coefficient of $q^d$ is equal to an integer plus
$r_1p_1c_{1;m}$.  On the right-hand-side, $c_{1;m}$ first appears as a coefficient of $q^e$ where
$e = m - a_1(r_1p_1-1)$.  We have that $d<e$ when $m>m'$ where
$$
m'=\frac{a_1(r_1p_1^2-r_1-p_1+1)}{p_1-1} = a_1\big(r_1(p_1+1)-1\big) < 2a_1p_1^2.
$$
The resulting expression shows that $c_{1;m}$ can be expressed as a fraction, where the
numerator is a finite sum involving integer multiples of positive powers of $c_{1;k}$ and
$c_{2;k}$ for $k<m$ and denominator equal to $r_1p_1$.  Similarly, by studying the first
appearance of $c_{1;m}$ in the expression
$$
T_{p_2}(j_{1;N}^{\ r_2})(z) = (j_{1;N}(z))^{r_2p_2} + Q_{2,1}(j_{1;N},j_{2;N}),
$$
we obtain an expression showing that $c_{1;m}$ can be expressed as a fraction, where the
numerator is a finite sum involving integer multiples of positive powers of $c_{1;k}$ and
$c_{2;k}$ for $k<m$ and denominator equal to $r_2p_2$.  By induction on $m$, and that
$(r_1p_1,r_2p_2)=1$, we conclude that $c_{1;m}$ is an integer.

Analogously, one studies $c_{2;m}$.  In the four different equations studied, the minimum number
of coefficients needed to be integers in order to initiate the induction is the smallest integer
that is larger than or equal to $a_2(p_2/(p_2-1))(p_1p_2)^2$, which is assumed to hold in the
premise of the theorem.
\end{proof}

\begin{remark}\label{rem:kappa algorithm}\rm
Let us now describe, then implement, an algorithm which will reduce the number of
coefficients which need to be tested for integrality.  For this remark alone, let us
set $a_1 =a_2=1$ if $g_N =0$.

Let $m$ be the lower bound determined in Theorem~\ref{genus_zero_integrality} and
Theorem~\ref{genus_notzero_integrality}.
Let $p_1,\cdots, p_k$ be the set of primes less than $m$ which are
relatively prime to $a_1a_2N$.  For each prime,
let $r_{i,l}$ be the integer in the range $1 \leq r_{i,l} \leq p_i$ satisfying
$m-(r_{i,l}-1)a_l \equiv 0 \mod p_i$, $l\in\{1,2\}$.  Let $S_{m,l}$ denote the set of triples
$$
S_{m,l} = \{(p_i, r_{i,l}, j): \quad j \geq 0 \quad \text{and} \quad
\frac{m-\big((r_{i,l}+jp_i)-1\big)a_l}{p_i} < m-a_l\big((r_{i,l}+jp_i)p_i-1\big)\},
\quad l\in\{1,2\}.
$$
Note that if $r_{i,l}$ is such that $a_l(r_{i,l}(p_i+1)-1)\geq m$, then the set
$S_{m,l}$ does not contain a triple whose prime is $p_i$.

Now let
$$
R_{m,l} = \{(r_{i,l}+jp_i)p_i: \quad (p_i, r_{i,l}, j) \in S_{m,l} \}, \quad l\in\{1,2\}.
$$
Assume that the greatest common divisor of all elements in $R_{m,1}$ is one and that of
$R_{m,2}$ is also one.  Then, by using all the Hecke operators $T_{p_i}$ applied to the
functions $j_{1;N}^{r_{i,1}+jp_i}$ and $j_{2;N}^{r_{i,2}+jp_i}$ and
the arguments from Theorem~\ref{genus_zero_integrality} and
Theorem~\ref{genus_notzero_integrality}, we can determine $c_{1;m}$ and $c_{2;m}$
from lower indexed coefficients and show that $c_{1;m}$ and $c_{2;m}$ are integers
if all lower indexed coefficients are integers.

Although the above observation is too cumbersome to employ theoretically,
it does lead to the following algorithm which can be implemented.
\begin{enumerate}
\item Let $m$ be the lower bound given in Theorem~\ref{genus_zero_integrality} for $g=0$
or Theorem~\ref{genus_notzero_integrality} for $g > 0$.
\item\label{step 2} Construct the sets $S_{m,l}$ and $R_{m,l}$, as described above.
\item If the greatest common divisor of all elements in $R_{m,1}$ is one and that of $R_{m,2}$
is one, too, then replace $m$ by $m-1$ and repeat with Step~\ref{step 2}.
\item If the greatest common divisor of all elements of $R_{m,1}$ or $R_{m,2}$ is greater than
one, let $\kappa = m$.
\end{enumerate}
The outcome of this algorithm yields a reduced number $\kappa$ of coefficients
which need to be tested for integrality in order to conclude that all
coefficients are integers.
\end{remark}
In Tables~\ref{tab:kappa genus zero} and~\ref{tab:kappa genus one} we list the level $N$
and improved bounds on $\kappa$ which were determined by the above algorithm
for all genus zero and genus one moonshine groups.

\begin{table}
\caption{\label{tab:kappa genus zero}Number of expansion coefficients that need to be integer
in order that all coefficients in the $q$-expansion of the Hauptmodul are integer.
Listed is the level $N$ and the value of $\kappa$ according to Remark~\ref{rem:kappa algorithm}
for the genus zero moonshine groups $\Gamma_0(N)^+$.}
$$
\begin{array}{l|cccccccccccccccccccccc}
N      &  1 &  2 &  3 &  5 &  6 &  7 & 10 & 11 & 13 & 14 & 15 & 17 & 19 & 21 & 22 & 23 & 26 & 29 & 30 & 31 & 33 & 34 \\ \hline
\kappa & 19 & 47 & 48 & 19 & 60 & 19 & 75 & 19 & 19 & 47 & 96 & 19 & 19 & 53 & 47 & 19 & 47 & 19 &127 & 19 & 48 & 47 \\
\end{array}
$$

$$
\begin{array}{l|cccccccccccccccccccccc}
N      & 35 & 38 & 39 & 41 & 42 & 46 & 47 & 51 & 55 & 59 & 62 & 66 & 69 & 70 & 71 & 78 & 87 & 94 & 95 &105 &110 &119 \\ \hline
\kappa & 19 & 47 & 48 & 19 &108 & 47 & 19 & 48 & 19 & 19 & 47 & 60 & 48 &181 & 19 & 81 & 48 & 47 & 19 &181 & 89 & 19 \\
\end{array}
$$
\end{table}

\begin{table}
\caption{\label{tab:kappa genus one}Number of expansion coefficients that need to be integer
in order that all coefficients in the $q$-expansions of the field generators are integer.
Listed is the level $N$ and the value of $\kappa$ according to Remark~\ref{rem:kappa algorithm}
for the genus one moonshine groups $\Gamma_0(N)^+$.}
$$
\begin{array}{l|ccccccccccccccccccc}
N      & 37 & 43 & 53 & 57 & 58 & 61 & 65 & 74 & 77 & 79 & 82 & 83 & 86 & 89 & 91 &101 &102 &111 &114 \\ \hline
\kappa &222 &194 &194 &285 &194 &194 &634 &222 &597 &194 &194 &194 &194 &194 &643 &194 &194 &222 &285 \\
\end{array}
$$

$$
\begin{array}{l|ccccccccccccccccccc}
N      &118 &123 &130 &131 &138 &141 &142 &143 &145 &155 &159 &174 &182 &190 &195 &210 &222 &231 &238 \\ \hline
\kappa &194 &194 &634 &194 &194 &194 &194 &271 &382 &382 &194 &194 &643 &382 &634 &599 &222 &597 &599 \\
\end{array}
$$
\end{table}

\section{The Fourier coefficients of the real analytic Eisenstein series}
\label{parabolic Eisenstein}

In this section we derive formulas for coefficients in the Fourier expansion of the real analytic
Eisenstein series $\E_\infty(z,s)$.  First, we derive an intermediate result, expressing the
Fourier coefficients in terms of Ramanujan sums.  Then, we compute coefficients in a closed form.

\subsection{The Fourier expansion of real analytic Eisenstein series on $\Gamma_0(N)^+$}

\begin{lemma}\label{Lemma:Parab Eisen}
The real analytic Eisenstein series $\E_\infty(z,s)$, defined by \eqref{parabol Eis defin} for
$z=x+iy$ and $\Re(s)>1$, admits a Fourier series expansion
\begin{align}\label{Parab.  Four.  exp}
\E_\infty(z,s) = y^s + \varphi_N(s)y^{1-s} + \sum_{m \neq 0} \varphi_N(m,s) W_s(mz),
\end{align}
where $W_s(mz)$ is the Whittaker function given by
$W_s(mz) = 2 \sqrt{\abs m y} K_{s-1/2} (2 \pi \abs m y) e(mx)$
and $K_{s-1/2}$ is the Bessel function.
Furthermore, coefficients of the Fourier expansion (\ref{Parab.  Four.  exp}) are given by
$$
\varphi_N(s) = \frac{s}{s-1}\frac{\xi(2s-1)}{\xi(2s)} \cdot
\prod_{\nu=1}^r \frac{p_\nu^{1-s}+1}{p_\nu^s+1},
$$
where $\xi(s)=\frac12s(s-1)\pi^{-s/2}\Gamma(s/2)\zeta(s)$
is the completed Riemann zeta function and
\begin{align}\label{Phi_N(m,s)}
\varphi_N(m,s) = \pi^s \frac{ m^{s-1}}{\Gamma(s)}
\sum_{v \mid N} v^s N^{-2s} \sum_{n \in \N : (v,n)=1} \frac{a_m((N/v)n)}{n^{2s}},
\end{align}
where we put
$$
a_m(n)=\mu \left( \frac{n}{(\abs m,n)}\right) \frac{\varphi(n)}{\varphi(n/(\abs m,n) ) }.
$$
\end{lemma}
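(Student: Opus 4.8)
The plan is to compute the Fourier expansion of $\E_\infty(z,s)$ directly from its definition \eqref{parabol Eis defin} by unfolding the sum over $\Gamma_\infty(N)\backslash\Gamma_0(N)^+$ into a sum over the relevant lower-left/lower-right entries $(c,d)$ of the matrices, exactly as one does for the classical Eisenstein series on $\PSL(2,\Z)$, but keeping careful track of the extra scaling factor $e^{-1/2}$ and the divisibility conditions $e\mid N$, $e\mid a$, $e\mid d$, $N\mid c$ that define $\Gamma_0(N)^+$. First I would fix a set of coset representatives: a matrix $e^{-1/2}\bigl(\begin{smallmatrix} a & b \\ c & d\end{smallmatrix}\bigr)$ with $ad-bc=e$ acts on $z$ by $z\mapsto (az+b)/(cz+d)$, and $\Im$ of this image is $e\,y/|cz+d|^2$. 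The $\Gamma_\infty(N)$ action on the left corresponds to translations $z\mapsto z+h$ for $h$ in the appropriate lattice, so the quotient is parametrized by admissible pairs $(c,d)$ up to the action fixing $c$ and shifting $d$. This reduces $\E_\infty(z,s)$ to $y^s$ (the identity coset) plus $\sum_{e\mid N} e^s \sum_{(c,d)} y^s/|cz+d|^{2s}$ over the nontrivial cosets with the prescribed congruences.

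Next I would carry out the standard Poisson-summation step: for the inner sum over $d$ in a fixed residue class, write $\sum_d |cz+d|^{-2s}$, Fourier-expand in $x$, and use the well-known integral $\int_{\R}(x^2+y^2)^{-s}e(-\xi x)\,dx$, which produces the $y^{1-s}$ term (from $\xi=0$) and the Bessel-function terms $W_s(mz)$ (from $\xi=m\neq 0$), together with the Whittaker normalization quoted in the statement. The zeroth coefficient assembles into a product of a ratio of completed zeta functions — giving the $\xi(2s-1)/\xi(2s)$ and the $s/(s-1)$ factors — times a finite Euler product over the primes $p_\nu\mid N$ coming from the local divisibility constraints; verifying that this finite product equals exactly $\prod_\nu (p_\nu^{1-s}+1)/(p_\nu^s+1)$ is a bookkeeping computation with the conditions $e\mid N,\ e\mid a,\ e\mid d,\ N\mid c$, most cleanly organized by factoring everything prime-by-prime since $N$ is square-free. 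For the nonzero coefficients I would collect, for each $m$, the arithmetic sum $\sum_{c}\,($number of admissible $d$ mod $c$ with $cz+d$ in the right class$)\cdot e^{s}\,|c|^{-2s}e(\text{something})$; the count of such $d$ is a Ramanujan-type sum, and grouping by $e=v$, $c=(N/v)\cdot(\text{something})\cdot n$ with $(v,n)=1$ yields the stated double sum over $v\mid N$ and $n$ with the Kloosterman/Ramanujan term $a_m(n)=\mu(n/(\abs m,n))\varphi(n)/\varphi(n/(\abs m,n))$, which is the standard evaluation of the Ramanujan sum $c_n(m)$ rewritten multiplicatively.

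The main obstacle I expect is purely combinatorial rather than analytic: correctly identifying the coset representatives for $\Gamma_\infty(N)\backslash\Gamma_0(N)^+$ and the precise congruence conditions on $(c,d)$ after incorporating both the Atkin-Lehner scaling $e^{-1/2}$ and the fact that $\Gamma_\infty(N)$ is the \emph{full} parabolic stabilizer in $\Gamma_0(N)^+$ (which is larger than $\Gamma_\infty\cap\Gamma_0(N)$ because the Fricke/Atkin-Lehner elements can contribute parabolic elements). Getting the width of the cusp right is essential for the normalization $\E_\infty(z,s)=y^s+\dots$ with leading coefficient exactly $1$, and getting the divisibility bookkeeping right is what pins down the exact shape of $\varphi_N(s)$ and $\varphi_N(m,s)$. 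Once the index set is correctly described, the remaining steps are the classical unfolding, Poisson summation, and Ramanujan-sum evaluation, and the meromorphic continuation in $s$ follows immediately because every ingredient — $\xi(2s-1)/\xi(2s)$, the finite Euler product, and the Bessel-function series (which converges rapidly for all $s$) — continues meromorphically.
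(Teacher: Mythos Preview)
Your approach is correct and follows the same underlying mathematics as the paper, but the paper takes a more economical route: rather than redoing the unfolding and Poisson summation from scratch, it invokes Theorem~3.4 of Iwaniec \cite{Iw02} to obtain the general shape \eqref{Parab.  Four.  exp} with the nonzero coefficients expressed via Kloosterman sums, $\varphi_N(m,s) = \pi^s |m|^{s-1}\Gamma(s)^{-1}\sum_{c\in C_N} S_N(0,m;c)\,c^{-2s}$, and cites \cite{JST12} both for the evaluation of the scattering matrix $\varphi_N(s)$ and for the description $C_N=\{(N/\sqrt v)\,n : v\mid N,\ n\in\N\}$ of the set of positive lower-left entries. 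The only computation actually carried out in the paper is identifying, for each $c=(N/\sqrt v)n$, which residues $d\pmod c$ arise from matrices in $\Gamma_0(N)^+$ (those with $(d,(N/v)n)=1$, subject to $(v,n)=1$), recognizing $S_N(0,m;c)$ as a Ramanujan sum, and applying the standard evaluation to produce $a_m((N/v)n)$. Your direct approach would of course reproduce all of this, just with more analytic detail written out.

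One small correction to your anticipated obstacle: the worry that $\Gamma_\infty(N)$ might be strictly larger than the integer translations because ``Atkin--Lehner elements can contribute parabolic elements'' is misplaced. The Atkin--Lehner involutions are elliptic of order two, and no element of $\Gamma_0(N)^+\setminus\Gamma_0(N)$ fixes $i\infty$; the parabolic stabilizer of $i\infty$ in $\Gamma_0(N)^+$ is exactly the group of integer translations, so the cusp has width $1$ and the leading coefficient $y^s$ comes out automatically with coefficient $1$. This actually removes the combinatorial subtlety you were anticipating.
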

\begin{proof}
The Fourier expansion \eqref{Parab.  Four.  exp} of the real analytic Eisenstein series
$\E_\infty(z,s)$ for $\Re(s)>1$ is a special case of \cite{Iw02}, Theorem 3.4., when the surface
has only one cusp at $i\infty$ with identity scaling matrix.  By $\varphi_N(s)$ we denote the
scattering matrix, evaluated in \cite{JST12} as
$$
\varphi_N(s)=\frac{s}{s-1}\frac{\xi(2s-1)}{\xi(2s)}\cdot D_N(s),
$$
where
$$
D_N(s)=\prod_{\nu=1}^r \frac{p_\nu^{1-s}+1}{p_\nu^s+1}.
$$

If $C_N$ denotes the set of positive left-lower entries of matrices from $\Gamma_0(N)^+$,
then, by \cite{Iw02}, Theorem 3.4.  The Fourier coefficients of \eqref{Parab.  Four.  exp} are
given by
\begin{align}\label{Fourier coeff at m}
\varphi_N(m,s) = \pi^s \frac{\abs m^{s-1}}{\Gamma(s)} \sum_{c\in C_N} \frac{S_N (0,m;c)}{c^{2s}},
\end{align}
where $S_N$ denotes the Kloosterman
sum (see \cite{Iw02}, formula (2.23)) defined, for $c\in C_N$, as
\begin{align}\label{S_N}
S_N(0,m;c) = \sum_{d\mod c:
\big(\begin{smallmatrix} * & * \\ c & d \end{smallmatrix}\big) \in \Gamma_0(N)^+}
e\left(m\frac{d}{c}\right).
\end{align}
In \cite{JST12} we proved that
$$
C_N=\{(N/\sqrt{v})n: \quad v\mid N, n\in\N\}.
$$
For a fixed $c=(N/\sqrt{v})n \in C_N$, with $v\mid N$ and $n\in\N$ arbitrary,
we can take $e=v$ in the definition of $\Gamma_0(N)^+$ to deduce that matrices
from $\Gamma_0(N)^+$ with left lower entry $c$ are given by
$$
\begin{pmatrix} \sqrt{v}a & b/\sqrt{v} \\ \frac{N}{v}\sqrt{v}n & \sqrt{v}d \end{pmatrix}
$$
for some integers $a$, $b$ and $d$ such that $vad-(N/v)bn=1$.
Since $N$ is square-free, this equation has a solution if and only if
$(v,n)=1$ and $(d,(N/v)n)=1$.

Therefore, for $m\in\Z\setminus\{0\}$, and fixed
$c= (N/\sqrt{v})n \in C_N$ equation \eqref{S_N} becomes
$$
S_N(0, m; (N/\sqrt{v})n)
= \sum_{\substack{1\leq d<(N/v)n \\ (d, (N/v)n)=1}} e\left(\frac{dm}{(N/v)n}\right).
$$
This is a Ramanujan sum, which can be evaluated using
formula (3.3) from \cite{IK04}, page 44 to get
$$
S_N(0, m; (N/\sqrt{v})n) = \mu \left( \frac{(N/v) n}{(\abs m, (N/v)n)} \right)
\frac{\varphi((N/v) n)}{\varphi((N/v)n/(\abs m, (N/v)n) ) }.
$$
For $n\in\N$ element $c= (N/\sqrt{v})n$ belongs to $C_N$ if and only if $v\mid N$
and $(n,v)=1$, hence equation \eqref{Fourier coeff at m} becomes
$$
\varphi_N(m,s) = \pi^s \frac{\abs m^{s-1}}{\Gamma(s)}\sum_{v\mid N} \sum_{(n,v)=1}
\frac{1}{((N/\sqrt{v})n)^{2s}} \cdot \mu \left( \frac{(N/v) n}{(\abs m, (N/v)n)} \right)
\frac{\varphi((N/v) n)}{\varphi((N/v)n/(\abs m, (N/v)n) ) },
$$
which proves \eqref{Phi_N(m,s)}.
\end{proof}

\subsection{Computation of Fourier coefficients}

In this subsection we will compute coefficients \eqref{Phi_N(m,s)} in closed form.  We will
assume that $m>0$ and incorporate negative terms via the identity $\varphi_N(-m,s)=\varphi_N(m,s)$.

Let $p$ and $q$ to denote prime numbers, and let $\alpha_p \in \Z_{\geq0}$  denote the
highest power of $p$ that divides $m$, i.e.\ the number $\alpha_p$ is such that
$p^{\alpha_p} \mid m$ and $p^{\alpha_p+1} \nmid m$.
For any prime $p$, set
$$
F_p(m,s)=\left( 1-\frac1{p^s}\right)\left( 1+\frac1{p^{s-1}}+\ldots
+\frac1{p^{\alpha_p(s-1)}}\right)
$$
and
$$
D_N(m,s)=\sum_{v \mid N}v^{-s} \cdot \prod_{p \mid v} F_p(m,2s)^{-1}
\cdot \prod_{p \mid (N/v)} (1-F_p(m,2s)^{-1}).
$$

\begin{theorem}\label{Th Fourier coeff}
Assume $N$ is square-free and let $\varphi_N(m,s)$ denote the $m$-th coefficient in the
Fourier series expansion \eqref{Parab.  Four.  exp} of the Eisenstein series.
Then the coefficients $\varphi_N(m,s)$ can be meromorphically continued
from the half plane $\Re(s)>1$ to the whole complex plane by the equation
$$
\varphi_N(m,s)= \pi^s\frac{m^{s-1}}{\Gamma(s)}
\cdot \frac{\sigma_{1-2s}(m)}{\zeta(2s)}\cdot D_N(m,s).
$$
\end{theorem}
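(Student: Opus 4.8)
The plan is to begin from the formula for $\varphi_N(m,s)$ proved in Lemma~\ref{Lemma:Parab Eisen}, valid for $\Re(s)>1$; assuming as we may that $m>0$, this reads
$$
\varphi_N(m,s) = \pi^s\frac{m^{s-1}}{\Gamma(s)}\sum_{v\mid N}v^sN^{-2s}\sum_{n\in\N:\,(v,n)=1}\frac{a_m((N/v)n)}{n^{2s}},
$$
and the task is to evaluate the inner double Dirichlet series in closed form. The first step is to recognize $a_m(n)$ as the Ramanujan sum $c_n(m)=\sum_{d\mid(\abs m,n)}d\,\mu(n/d)$; in particular $n\mapsto a_m(n)$ is multiplicative, with $a_m(p^j)=\varphi(p^j)=p^j-p^{j-1}$ for $1\le j\le\alpha_p$, $a_m(p^{\alpha_p+1})=-p^{\alpha_p}$, and $a_m(p^j)=0$ for $j\ge\alpha_p+2$, where $\alpha_p$ is the exponent of $p$ in $m$ as defined above. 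A geometric-series computation then shows that the local Euler factor
$$
L_p(s):=\sum_{j\ge0}\frac{a_m(p^j)}{p^{2js}}=\Bigl(1-\tfrac1{p^{2s}}\Bigr)\sum_{j=0}^{\alpha_p}p^{j(1-2s)}
$$
coincides with $F_p(m,2s)$, and since $F_p(m,2s)=(1-p^{-2s})\sigma_{1-2s}(p^{\alpha_p})$, the product $\prod_pF_p(m,2s)$ equals $\sigma_{1-2s}(m)/\zeta(2s)$ — the classical Ramanujan identity — which is where that factor of the final answer will come from.

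The second step is to compute, for each $v\mid N$, the restricted sum $T_v:=\sum_{(n,v)=1}a_m((N/v)n)n^{-2s}$. Writing $w=N/v$, which is square-free and coprime to $v$, and letting $p^{j_p}$ denote the exact power of $p$ dividing $n$, multiplicativity of $a_m$ gives, for $n$ coprime to $v$,
$$
a_m(wn)=\prod_{p\mid w}a_m\bigl(p^{1+j_p}\bigr)\cdot\prod_{p\nmid N}a_m\bigl(p^{j_p}\bigr)
$$
(note $j_p=0$ for $p\mid v$, so such primes do not appear), whence $T_v$ factors as an Euler product over the primes dividing $w$ and the primes not dividing $N$. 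The primes $p\nmid N$ contribute $\prod_{p\nmid N}L_p(s)=\prod_{p\nmid N}F_p(m,2s)$, while each prime $p\mid w$ contributes the shifted local factor $\sum_{j\ge0}a_m(p^{1+j})p^{-2js}=p^{2s}\bigl(L_p(s)-1\bigr)=p^{2s}\bigl(F_p(m,2s)-1\bigr)$. Collecting the factors $p^{2s}$ over $p\mid w$ gives $\prod_{p\mid w}p^{2s}=w^{2s}=(N/v)^{2s}$, so that
$$
T_v=(N/v)^{2s}\Bigl(\prod_{p\mid(N/v)}\bigl(F_p(m,2s)-1\bigr)\Bigr)\prod_{p\nmid N}F_p(m,2s).
$$
Substituting back and using $v^sN^{-2s}(N/v)^{2s}=v^{-s}$ yields
$$
\varphi_N(m,s)=\pi^s\frac{m^{s-1}}{\Gamma(s)}\Bigl(\prod_{p\nmid N}F_p(m,2s)\Bigr)\sum_{v\mid N}v^{-s}\prod_{p\mid(N/v)}\bigl(F_p(m,2s)-1\bigr).
$$

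The third step is a formal rearrangement matching this with the claimed form. Using $\sigma_{1-2s}(m)/\zeta(2s)=\prod_pF_p(m,2s)$, split this product over $p\nmid N$ and $p\mid N$, and then, since $N$ is square-free, split the finite product over $p\mid N$ according to whether $p\mid v$ or $p\mid(N/v)$; a short manipulation gives
$$
\Bigl(\prod_{p\mid N}F_p(m,2s)\Bigr)\prod_{p\mid v}F_p(m,2s)^{-1}\prod_{p\mid(N/v)}\bigl(1-F_p(m,2s)^{-1}\bigr)=\prod_{p\mid(N/v)}\bigl(F_p(m,2s)-1\bigr),
$$
so that $(\sigma_{1-2s}(m)/\zeta(2s))\,D_N(m,s)$ equals exactly the double product obtained above for $\varphi_N(m,s)/(\pi^sm^{s-1}/\Gamma(s))$; this proves the identity for $\Re(s)>1$. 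The meromorphic continuation to all of $\C$ is then immediate, since $\pi^sm^{s-1}/\Gamma(s)$ and $\sigma_{1-2s}(m)/\zeta(2s)$ continue meromorphically and $D_N(m,s)$ is a finite sum of products of the entire functions $F_p(m,2s)$. I expect the routine parts to be the multiplicativity and local values of $a_m$ and the evaluation $L_p(s)=F_p(m,2s)$; the main obstacle should be the bookkeeping in the second step — correctly handling the constraint $(n,v)=1$ together with the fixed factor $N/v$ so as to see which primes receive the ordinary local factor $F_p(m,2s)$ and which receive the shifted factor $p^{2s}(F_p(m,2s)-1)$, and tracking the accumulated $(N/v)^{2s}$ so that it cancels cleanly against $v^sN^{-2s}$ to leave $v^{-s}$.
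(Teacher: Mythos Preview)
Your proof is correct and follows essentially the same route as the paper: start from the formula of Lemma~\ref{Lemma:Parab Eisen}, use multiplicativity of $a_m$ to factor the Dirichlet series into local pieces, identify the local factor as $F_p(m,2s)$ (the paper cites \cite{Ko09} for this, whereas you compute it directly), compute the restricted sum $T_v$ by splitting primes into those dividing $N/v$ (shifted factor $p^{2s}(F_p(m,2s)-1)$) and those coprime to $N$ (ordinary factor $F_p(m,2s)$), and then match with the definition of $D_N(m,s)$. One small inaccuracy: in your last sentence you call $D_N(m,s)$ ``a finite sum of products of the entire functions $F_p(m,2s)$'', but the definition of $D_N(m,s)$ involves $F_p(m,2s)^{-1}$, so it is merely meromorphic; this does not affect the conclusion, since meromorphic continuation is all that is claimed.
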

\begin{proof}
We employ the equation \eqref{Phi_N(m,s)} and observe that coefficients $a_m(n)$ are
multiplicative.  Therefore, it is natural to investigate the associated $L$-series
$$
L_m(s)= \sum_{n=1}^\infty \frac{a_m(n)}{n^s}
$$
defined for $\Re(s)>1$.  The series $L_m(s)$ is defined in \cite{Ko09}, Lemma 2.2 for $\Re(s)>1$,
where its analytic continuation to the whole complex plane is given by the formula
$L_m(s) = \sigma_{1-s}(m)/\zeta(s)$.

The multiplicativity of coefficients $a_m(n)$ implies that for $\Re(s)>1$ one has the
Euler product
$$
L_m(s) = \prod_p E_p(m,s)
= \prod_p \left( 1+ \frac{a_m(p)}{p^s}+ \frac{a_m(p^2)}{p^{2s}}+ \ldots\right),
$$
where $E_p(m,s)$ stands for the $p$-th Euler factor, i.e.
\begin{align}\label{F_p formula}
E_p(m,s)= \sum_{\nu=0}^\infty \frac{a_m(p^\nu)}{p^{\nu s}}
\quad \text{for $\Re(s) >1$.}
\end{align}
By the computations presented in \cite{Ko09}, page 1137, one has
$$
E_p(m,s)=\left( 1-\frac1{p^s}\right)\left( 1+\frac1{p^{s-1}}+ \ldots
+\frac1{p^{\alpha_p (s-1)}}\right),
$$
hence, $E_p(m,s)=F_p(m,s)$.  Therefore,
\begin{align}\label{coprime basic sum}
\sum_{n \in \N : (N,n)=1} \frac{a_m(n)}{n^s}
= \prod_{p \nmid N} E_p(m,s)
= \frac{\sigma_{1-s}(m)}{\zeta(s)} \cdot \prod_{p \mid N} F_p(m,s)^{-1}.
\end{align}

Let $v \mid N$ be fixed.  In order to compute the sum
$$
\sum_{n \in \N : (v,n)=1} \frac{a_m((N/v)n)}{n^{2s}}
$$
using
equation \eqref{coprime basic sum}, we use the fact that $N$ is square-free,
so $(v, N/v)=1$, hence every integer $n$ coprime to $v$ can be represented as
\begin{align}\label{rep n}
n= \prod_{q \mid (N/v)} q^{n_q} \cdot k, \text{ where } n_q \geq0, \ (k,v)=1
\text{ and } (k,q)=1 \text{ for all } q \mid (N/v).
\end{align}
Since $(k,v)=1$ and $(k,q)=1$ for every $q \mid (N/v)$, we deduce that
actually $(k,N)=1$.
Therefore
\begin{align}\label{rep set}
\{n\in\N : (v,n)=1\} = \bigcup_{q \mid (N/v)} \bigcup_{n_q=0}^\infty
\left( \left\{ \prod_{p \mid (N/v)} p^{n_p} \cdot k : \quad (k,N)=1 \right\}
\right).
\end{align}
Let $q_1,\ldots,q_l$ denote the set of all prime divisors of $N/v$.
The multiplicativity of coefficients $a_m$ implies that for $n$ given by
\eqref{rep n} one has
$$
a_m\left( \frac{N}{v}n \right)= \prod_{\nu=1}^l a_m( q_\nu^{n_{q_\nu}+1}) a_m (k);
$$
hence, by \eqref{rep set} one gets that
$$
\sum_{n \in \N : (v,n)=1} \frac{a_m((N/v)n)}{n^{2s}}
= \sum_{n_{q_1}=0}^\infty \cdots \sum_{n_{q_l}=0}^\infty
\prod_{\nu=1}^l \frac{a_m( q_\nu^{n_{q_\nu}+1})}{(q_\nu^{n_{q_\nu}})^{2s}}
\sum_{(k,N)=1}\frac{a_m(k)}{k^{2s}}.
$$
The identity \eqref{coprime basic sum} is now applied to the inner sum in the above
equation, from which we deduce that
$$
\sum_{n \in \N : (v,n)=1} \frac{a_m((N/v)n)}{n^{2s}}
= \frac{\sigma_{1-2s}(m)}{\zeta(2s)}
\cdot \prod_{p \mid N} F_p(m,2s)^{-1}
\cdot \sum_{n_{q_1}=0}^\infty \frac{a_m( q_1^{n_{q_1}+1})}{(q_1^{n_{q_1}})^{2s}}
\cdots \sum_{n_{q_l}=0}^\infty \frac{a_m( q_l^{n_{q_l}+1})}{(q_l^{n_{q_l}})^{2s}}
$$
for $\Re(s)>1$.

For all $\nu=1,\ldots,l$ one then has
$$
\sum_{n_{q_\nu}=0}^\infty \frac{a_m( q_\nu^{n_{q_\nu}+1})}{(q_\nu^{n_{q_\nu}})^{2s}}
= q_\nu^{2s} \cdot \sum_{n_{q_\nu}=0}^\infty
\frac{a_m( q_\nu^{n_{q_\nu}+1})}{(q_\nu^{n_{q_\nu}+1})^{2s}}= q_\nu^{2s}
\cdot (F_{q_\nu}(m,2s)-1),
$$
by \eqref{F_p formula}.
Therefore,
\begin{multline}\label{phi final sum}
\sum_{n \in \N : (v,n)=1} \frac{a_m((N/v)n)}{n^{2s}}
= \frac{\sigma_{1-2s}(m)}{\zeta(2s)} \cdot \prod_{p \mid N} F_p(m,2s)^{-1}
\cdot \left(\frac{N}{v} \right)^{2s} \prod_{q \mid (N/v)} (F_q(m,2s)-1) \\
= N^{2s}v^{-2s}\frac{\sigma_{1-2s}(m)}{\zeta(2s)}
\cdot \prod_{p \mid v} F_p(m,2s)^{-1}
\cdot \prod_{q \mid (N/v)} (1-F_q(m,2s)^{-1}).
\end{multline}

Substituting \eqref{phi final sum} into \eqref{Phi_N(m,s)} completes the proof.
\end{proof}

\begin{proposition}
For a square-free integer $N=\prod_{\nu=1}^r p_\nu$,\ \ $D_N(m,s)$ is
multiplicative in $N$,
$$
D_N(m,s)=\prod_{\nu=1}^r \left( 1- \left(1-\frac1{p_\nu^s} \right)
F_{p_\nu}(m,2s)^{-1}\right)=\prod_{\nu=1}^r D_{p_\nu}(m,s).
$$
Furthermore, for a prime $p$ and $m\in\Z_{>0}$
$$
D_p(m,s)= 1-\frac{p^s}{p^s+1} \left( \sigma_{1-2s}(p^{\alpha_p})\right)^{-1},
$$
where $\alpha_p\in\Z_{\geq0}$ is the highest power of $p$ dividing $m$.
\end{proposition}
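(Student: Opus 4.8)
The plan is to derive both claims directly from the defining sum
$$D_N(m,s)=\sum_{v \mid N}v^{-s}\prod_{p \mid v} F_p(m,2s)^{-1}\prod_{p \mid (N/v)} (1-F_p(m,2s)^{-1}),$$
exploiting that $N$ is square-free.

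For the multiplicativity, I would note that the divisors $v\mid N$ are in bijection with the subsets $S\subseteq\{p_1,\dots,p_r\}$ via $v=\prod_{p\in S}p$, under which $v^{-s}\prod_{p\mid v}F_p(m,2s)^{-1}=\prod_{p\in S}\big(p^{-s}F_p(m,2s)^{-1}\big)$ and $\prod_{p\mid(N/v)}(1-F_p(m,2s)^{-1})=\prod_{p\notin S}(1-F_p(m,2s)^{-1})$. Summing over all $S$ therefore reproduces exactly the expansion of $\prod_{\nu=1}^r\big(p_\nu^{-s}F_{p_\nu}(m,2s)^{-1}+1-F_{p_\nu}(m,2s)^{-1}\big)$. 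Since $p_\nu^{-s}F_{p_\nu}(m,2s)^{-1}+1-F_{p_\nu}(m,2s)^{-1}=1-(1-p_\nu^{-s})F_{p_\nu}(m,2s)^{-1}$, and this last expression is precisely the defining sum specialized to $N=p_\nu$, both stated equalities follow simultaneously.

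For the closed form of $D_p(m,s)$, I would substitute $s\mapsto 2s$ in the definition of $F_p$ and recognise the geometric sum: $1+p^{-(2s-1)}+\dots+p^{-\alpha_p(2s-1)}=\sum_{j=0}^{\alpha_p}(p^{j})^{1-2s}=\sigma_{1-2s}(p^{\alpha_p})$, so that $F_p(m,2s)=(1-p^{-2s})\,\sigma_{1-2s}(p^{\alpha_p})$. Writing $1-p^{-2s}=(1-p^{-s})(1+p^{-s})$ and cancelling, one gets
$$(1-p^{-s})F_p(m,2s)^{-1}=\frac{1-p^{-s}}{(1-p^{-s})(1+p^{-s})\,\sigma_{1-2s}(p^{\alpha_p})}=\frac{p^{s}}{p^{s}+1}\cdot\frac{1}{\sigma_{1-2s}(p^{\alpha_p})},$$
and substituting into $D_p(m,s)=1-(1-p^{-s})F_p(m,2s)^{-1}$, which is the case $N=p$ of the formula just proved, yields the asserted identity.

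None of these steps presents a genuine difficulty; the only points requiring care are keeping the argument $2s$ consistent when translating $F_p(m,2s)$ into a value of $\sigma_{1-2s}$, and observing that it is the cancellation of the factor $1-p^{-s}$ that produces the tidy shape $p^s/(p^s+1)$. One should also remark that the hypothesis $m\in\Z_{>0}$ guarantees that each $\alpha_p$ is a well-defined nonnegative integer, so that all the finite geometric sums above are meaningful.
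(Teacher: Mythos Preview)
Your proof is correct and follows essentially the same approach as the paper. The only cosmetic difference is that the paper establishes the multiplicativity by induction on the number $r$ of prime factors of $N$, whereas you expand the product $\prod_{\nu=1}^r\big(p_\nu^{-s}F_{p_\nu}(m,2s)^{-1}+1-F_{p_\nu}(m,2s)^{-1}\big)$ directly via the subset bijection; these are two standard presentations of the same argument. For the second identity the paper simply declares it a trivial consequence of the definition of $F_p$ and properties of $\sigma_s$, which is exactly the computation you have written out in full.
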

\begin{proof}
We apply induction on number $r$ of prime factors of $N$.
For $r=1$ the statement is immediate.

Assume that the statement is true for all numbers $N$ with $r$ distinct prime
factors and let $N=\prod_{\nu=1}^{r+1}p_\nu$.  Since
$\{v: v\mid N\}=\{v: v\mid \prod_{\nu=1}^r p_\nu\}
\cup \{v\cdot p_{r+1}: v \mid \prod_{\nu=1}^r p_\nu\}$,
the statement is easily deduced from the definition of $D_N(m,s)$ and the inductive assumption.

The second statement follows trivially from the properties of the function $\sigma_s(m)$
and the definition of the Euler factor $F_p$.
\end{proof}

\begin{corollary}
For any square-free $N$, the groups $\Gamma_0(N)^+$ have no residual eigenvalues
$\lambda\in[0,1/4)$ besides the obvious one $\lambda_0=0$.
\end{corollary}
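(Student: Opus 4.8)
The plan is to read the statement off the closed-form expressions established in Lemma~\ref{Lemma:Parab Eisen} and Theorem~\ref{Th Fourier coeff}, combined with the standard spectral theory of finite-volume hyperbolic surfaces with a single cusp. Writing $\lambda=s(1-s)$, the residual eigenvalues in $[0,1/4)$ are exactly the numbers $s_j(1-s_j)$ as $s_j$ runs over the poles of $\E_\infty(z,s)$ --- equivalently, over the poles of the scattering function $\varphi_N(s)$ --- lying in the half-open interval $(1/2,1]$, and the pole at $s=1$ accounts for the ``obvious'' eigenvalue $\lambda_0=0$ with constant eigenfunction. Hence it suffices to show that $\E_\infty(z,s)$, and in particular $\varphi_N(s)$, has no pole on the open interval $(1/2,1)$.

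First I would treat the constant term $y^s+\varphi_N(s)y^{1-s}$, whose only possible singularities come from $\varphi_N(s)=\tfrac{s}{s-1}\tfrac{\xi(2s-1)}{\xi(2s)}D_N(s)$. On $(1/2,1)$ the rational factor $s/(s-1)$ is holomorphic (its pole is at $s=1$); $\xi$ is entire, so $\xi(2s-1)$ contributes nothing; $\xi$ has no real zero --- indeed $\xi(\sigma)>0$ for real $\sigma\geq1$, and this extends to all real $\sigma$ via $\xi(\sigma)=\xi(1-\sigma)$ --- so $1/\xi(2s)$ is holomorphic there; and $D_N(s)=\prod_\nu\tfrac{p_\nu^{1-s}+1}{p_\nu^{s}+1}$ is a ratio of strictly positive real-analytic functions. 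Thus $\varphi_N(s)$ is holomorphic on $(1/2,1)$. Next I would treat the nonconstant terms $\varphi_N(m,s)W_s(mz)$: by Theorem~\ref{Th Fourier coeff}, $\varphi_N(m,s)=\pi^s m^{s-1}\Gamma(s)^{-1}\sigma_{1-2s}(m)\zeta(2s)^{-1}D_N(m,s)$, where $1/\Gamma(s)$ is entire, $\sigma_{1-2s}(m)$ is a finite exponential sum hence entire, $1/\zeta(2s)$ is holomorphic on $(1/2,1)$ since $\zeta(\sigma)\neq0$ for real $\sigma>1$, and $D_N(m,s)=\prod_p D_p(m,s)$ with $D_p(m,s)=1-\tfrac{p^s}{p^s+1}\big(\sigma_{1-2s}(p^{\alpha_p})\big)^{-1}$; for real $s$ the quantity $\sigma_{1-2s}(p^{\alpha_p})=\sum_{j=0}^{\alpha_p}p^{j(1-2s)}$ is a sum of positive reals, hence nonzero, so $D_p(m,s)$ has no pole. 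The Whittaker functions $W_s(mz)$ are entire in $s$ since $K_{s-1/2}$ is entire in its order, and the series over $m$ converges locally uniformly on $(1/2,1)$ from the exponential decay of $K_{s-1/2}$ against the polynomial growth of $\varphi_N(m,s)$. Altogether $\E_\infty(z,s)$ extends holomorphically across $(1/2,1)$.

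Consequently $\varphi_N(s)$ has no pole in $(1/2,1)$, so there is no residual eigenvalue in $(0,1/4)$; the simple pole of $\varphi_N(s)$ at $s=1$ (which one checks is genuinely simple, the numerator $\xi(2s-1)$ and $D_N(s)$ being finite and nonzero at $s=1$) then contributes only $\lambda_0=0$, which proves the corollary. I do not expect a substantive obstacle here: once the formulas of Lemma~\ref{Lemma:Parab Eisen} and Theorem~\ref{Th Fourier coeff} are available, everything reduces to the elementary fact that the denominators $\xi(2s)$, $\zeta(2s)$, $p^s+1$, and $\sigma_{1-2s}(p^{\alpha_p})$ are non-vanishing for real $s\in(1/2,1)$. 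The only point that needs a careful citation is the precise form of the spectral decomposition --- via the Maass--Selberg relations for a group with one cusp --- that identifies residual eigenvalues in $[0,1/4)$ with poles of the Eisenstein series in $(1/2,1]$, together with the remark that the pole at $s=1$ corresponds to the already-known eigenvalue $\lambda_0=0$.
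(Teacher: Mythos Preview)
Your proof is correct and follows essentially the same approach as the paper: identify residual eigenvalues in $(0,1/4)$ with poles of the Eisenstein series (equivalently of $\varphi_N(s)$) in $(1/2,1)$, then verify from the explicit formula that $\varphi_N(s)$ has no such poles. The only difference is that the paper dispenses with your term-by-term check of the nonconstant Fourier coefficients by invoking the general spectral-theoretic fact that the poles of $\E_\infty(z,s)$ are exactly the poles of the scattering function $\varphi_N(s)$; your direct verification using Theorem~\ref{Th Fourier coeff} is a valid alternative and makes the argument self-contained.
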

\begin{proof}
From Lemma~\ref{Lemma:Parab Eisen} with $m=0$, we have a formula for the constant
term $\varphi_N(s)$ in the Fourier expansion of the real analytic Eisenstein series, namely
$$
\varphi_N(s)=\frac{s}{s-1}\frac{\xi(2s-1)}{\xi(2s)}\cdot D_N(s);
$$
see also Lemma 5 of \cite{JST12}.  It is elementary to see that $\varphi_N(s)$
has no poles in the interval $(1/2,1)$.  From the spectral theory, one has that the
residual eigenvalues $\lambda$ of the hyperbolic Laplacian such that $\lambda \in (0,1/4)$
correspond to poles $s$ of the real analytic Eisenstein series with $s \in (1/2,1)$ with
the relation $s(1-s) = \lambda$.  Since poles of real analytic Eisenstein series $\E_\infty(z,s)$
are exactly the poles of $\varphi_N(s)$, the statement is proved.
\end{proof}

\section{Kronecker's limit formula for $\Gamma_0(N)^+$}\label{Kronecker limit}

In this section we derive Kronecker's limit formula for the Laurent series expansion of the
real analytic Eisenstein series.  First, we will prove the following, intermediate result.

\begin{proposition}\label{pre-Kroneck limit}
For a square-free integer $N=\prod_{\nu=1}^r p_\nu$, we have the Laurent expansion
\begin{align}\label{pre_kronecker limit f-la}
\E_\infty(z,s)
=\frac{C_{-1,N}}{(s-1)} + C_{0,N} + C_{1,N} \log y + y
+ \frac{6}{\pi} \sum_{m=1}^\infty
\sigma_{-1}(m) D_N(m,1) ( e(mz) + e(-m\overline{z}) ) + O(s-1),
\end{align}
as $s \to 1$, where
$$
C_{-1,N} = -C_{1,N} = \frac{3\cdot2^r}{\pi\sigma(N)}
$$
and
$$
C_{0,N}= \frac{3\cdot2^r}{\pi\sigma(N)}
\left( -\frac12\sum_{\nu=1}^r \frac{1+3p_\nu}{1+p_\nu} \log p_\nu
+ 2-24\zeta'(-1)-2\log4\pi \right).
$$
\end{proposition}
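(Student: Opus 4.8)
The plan is to read the Laurent expansion straight off the Fourier expansion of Lemma~\ref{Lemma:Parab Eisen}, using the closed form of $\varphi_N(m,s)$ from Theorem~\ref{Th Fourier coeff}, by expanding each of the three pieces $y^s$, $\varphi_N(s)y^{1-s}$, and $\sum_{m\neq0}\varphi_N(m,s)W_s(mz)$ about $s=1$ separately and then adding.

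First I would dispose of the two ``easy'' pieces. Since $y^s=y+y(s-1)\log y+O((s-1)^2)$, the term $y^s$ is holomorphic at $s=1$ and contributes exactly the summand $y$ of the constant coefficient. For the sum over $m\neq0$, Theorem~\ref{Th Fourier coeff} shows that $\varphi_N(m,s)=\pi^s m^{s-1}\Gamma(s)^{-1}\sigma_{1-2s}(m)\zeta(2s)^{-1}D_N(m,s)$ is holomorphic at $s=1$ with value $\tfrac6\pi\sigma_{-1}(m)D_N(m,1)$ (using $\zeta(2)=\pi^2/6$), while $W_s(mz)=2\sqrt{\abs m y}\,K_{s-1/2}(2\pi\abs m y)e(mx)$ specializes at $s=1$ through $K_{1/2}(u)=\sqrt{\pi/(2u)}\,e^{-u}$ to $e(mz)$ for $m>0$ and to $e(-\abs m\overline z)$ for $m<0$; pairing $m$ with $-m$ via $\varphi_N(-m,s)=\varphi_N(m,s)$ then produces precisely the series $\tfrac6\pi\sum_{m\ge1}\sigma_{-1}(m)D_N(m,1)(e(mz)+e(-m\overline z))$ of \eqref{pre_kronecker limit f-la}. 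Passing to $s=1$ term by term is legitimate because the series converges absolutely for $\Re(s)>1$, the Whittaker functions decay exponentially in $m$, and the meromorphic continuation is already in hand from Section~\ref{parabolic Eisenstein}.

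The content is in the piece $\varphi_N(s)y^{1-s}$. I would write $\varphi_N(s)=\tfrac{s}{s-1}G(s)$, where $G(s):=\tfrac{\xi(2s-1)}{\xi(2s)}\prod_{\nu=1}^{r}\tfrac{p_\nu^{1-s}+1}{p_\nu^{s}+1}$ is holomorphic near $s=1$. From $\xi(1)=\tfrac12$, $\xi(2)=\pi/6$, and $\prod_\nu\tfrac{2}{p_\nu+1}=2^{r}/\sigma(N)$ (valid since $N$ square-free gives $\sigma(N)=\prod_\nu(p_\nu+1)$) one gets $G(1)=\tfrac{3\cdot2^{r}}{\pi\sigma(N)}=C_{-1,N}$. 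Combining $\tfrac{s}{s-1}=1+\tfrac1{s-1}$ with $y^{1-s}=1-(s-1)\log y+O((s-1)^2)$ yields
$$
\varphi_N(s)y^{1-s}=\frac{G(1)}{s-1}+\bigl(G(1)+G'(1)\bigr)-G(1)\log y+O(s-1),
$$
which reads off the polar coefficient $C_{-1,N}=G(1)$, the logarithmic coefficient $C_{1,N}=-G(1)=-C_{-1,N}$, and reduces $C_{0,N}$ to evaluating $G(1)+G'(1)$. Writing $G'(1)=G(1)\cdot\tfrac{G'}{G}(1)$, I would split $\tfrac{G'}{G}(1)=2\tfrac{\xi'}{\xi}(1)-2\tfrac{\xi'}{\xi}(2)+L(1)$, where $L(1)$ is the logarithmic derivative of the finite Euler product at $s=1$; the latter is elementary and equals $-\sum_\nu\bigl(\tfrac12+\tfrac{p_\nu}{p_\nu+1}\bigr)\log p_\nu=-\tfrac12\sum_\nu\tfrac{1+3p_\nu}{1+p_\nu}\log p_\nu$, which is exactly the arithmetic term of $C_{0,N}$. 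For the $\xi$-terms I would start from $\tfrac{\xi'}{\xi}(s)=\tfrac1s+\tfrac1{s-1}-\tfrac12\log\pi+\tfrac12\tfrac{\Gamma'}{\Gamma}(\tfrac s2)+\tfrac{\zeta'}{\zeta}(s)$: at $s=1$ the simple pole of $\zeta$ cancels $\tfrac1{s-1}$ and leaves Euler's constant $\gamma$, so with $\tfrac{\Gamma'}{\Gamma}(\tfrac12)=-\gamma-2\log2$ one finds $\tfrac{\xi'}{\xi}(1)=1+\tfrac\gamma2-\tfrac12\log(4\pi)$; for $\tfrac{\xi'}{\xi}(2)$ the functional equation $\xi(s)=\xi(1-s)$ gives $\tfrac{\xi'}{\xi}(2)=-\tfrac{\xi'}{\xi}(-1)$, and evaluating the right side with $\tfrac{\Gamma'}{\Gamma}(-\tfrac12)=2-\gamma-2\log2$ and $\zeta(-1)=-\tfrac1{12}$ brings in $\zeta'(-1)$ and gives $\tfrac{\xi'}{\xi}(2)=\tfrac12+\tfrac\gamma2+\tfrac12\log(4\pi)+12\zeta'(-1)$. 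Hence $2\tfrac{\xi'}{\xi}(1)-2\tfrac{\xi'}{\xi}(2)=1-2\log(4\pi)-24\zeta'(-1)$, so
$$
1+\frac{G'}{G}(1)=2-24\zeta'(-1)-2\log(4\pi)-\tfrac12\sum_{\nu=1}^{r}\frac{1+3p_\nu}{1+p_\nu}\log p_\nu,
$$
and multiplying by $G(1)=\tfrac{3\cdot2^{r}}{\pi\sigma(N)}$ gives $G(1)+G'(1)=C_{0,N}$; assembling the three pieces yields \eqref{pre_kronecker limit f-la}.

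The hard part will be this last computation: tracking the cancellation of the pole of $\zeta$ against $\tfrac1{s-1}$, getting the Gamma special values right, and above all carrying out the functional-equation manipulation that converts $\tfrac{\xi'}{\xi}(2)$ into an expression containing $\zeta'(-1)$. Everything else --- the two ``easy'' pieces and the logarithmic derivative of the finite Euler product --- is routine. One could alternatively reduce to the classical Kronecker limit formula for $\PSL(2,\Z)$ by first writing $\E_\infty(z,s)$ as an explicit $s$-dependent linear combination of the non-holomorphic $\PSL(2,\Z)$-Eisenstein series at the points $vz$, $v\mid N$ --- for instance $\E_\infty(z,s)=\tfrac1{p^{s}+1}\bigl(E(z,s)+E(pz,s)\bigr)$ when $N=p$ is prime --- but the direct route above uses only the Fourier coefficients already computed and keeps the argument self-contained.
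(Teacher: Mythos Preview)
Your argument is correct and follows the same overall structure as the paper --- expand each of the three Fourier pieces $y^s$, $\varphi_N(s)y^{1-s}$, and $\sum_{m\neq0}\varphi_N(m,s)W_s(mz)$ about $s=1$ --- but you take a more laborious route for the constant term. The paper factors $\varphi_N(s)=\varphi(s)D_N(s)$, where $\varphi(s)=\tfrac{s}{s-1}\tfrac{\xi(2s-1)}{\xi(2s)}$ is the $\PSL(2,\Z)$ scattering matrix, and then simply \emph{cites} the classical Kronecker limit formula $\tfrac{\pi}{3}\varphi(s)y^{1-s}=\tfrac1{s-1}+A-\log y+O(s-1)$ with $A=2-24\zeta'(-1)-2\log4\pi$ (from \cite{Ku2001}); all that remains is the elementary computation of $D_N(1)=2^r/\sigma(N)$ and $D_N'(1)$, which gives the arithmetic sum $-\tfrac12\sum_\nu\tfrac{1+3p_\nu}{1+p_\nu}\log p_\nu$. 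You instead re-derive the constant $A$ from scratch via the special values $\tfrac{\xi'}{\xi}(1)$ and $\tfrac{\xi'}{\xi}(2)$, which is self-contained but costs you the digamma and functional-equation bookkeeping you flag as ``the hard part''. Amusingly, the alternative you sketch at the end --- reducing to the classical case --- is essentially what the paper does, only the paper factors at the level of the scattering matrix rather than the full Eisenstein series.
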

\begin{proof}
We start with the formula \eqref{Parab.  Four.  exp} for the
Fourier expansion of real analytic Eisenstein series at the cusp $i\infty$.

Let $\varphi$ denote the scattering matrix for $\PSL(2,\Z)$.
Then, one has $\varphi_N(s) = \varphi(s) D_N(s)$.
We will use this fact in order to deduce the first two terms in the Laurent
series expression of $\E_\infty(z,s)$ around $s=1$.

By the classical Kronecker limit formula for $\PSL(2,\Z)$ the Laurent
series expansion of $\varphi(s) y^{1-s}$ at $s=1$ is given by
(see e.g.\ \cite{Ku2001}, page 228)
$$
\frac\pi3 \varphi(s) y^{1-s} = \frac1{s-1} + A - \log y + O(s-1),
$$
where $A=2-24\zeta'(-1)-2\log4\pi$.

The function $D_N(s)$ is holomorphic in any neighborhood of $s=1$, hence
$D_N(s)= D_N(1) + D_N'(1)(s-1) + O(s-1)^2$, as $s \to 1$.
Therefore,
$$
\frac\pi3 \varphi_N(s) y^{1-s}
= \frac{D_N(1)}{s-1} + D_N'(1)+(A-\log y)D_N(1) + O(s-1),
$$
as $s\to1$.

In order to get the residue and the constant term steaming from $\varphi_N$
it is left to compute values of $D_N$ and $D_N'$ at $s=1$,
$$
D_N(1)= 2^r \prod_{\nu=1}^r (1+p_\nu)^{-1}=\frac{2^r}{\sigma(N)}
\ \text{ and }\ D_N'(1)
= -\frac{2^{r-1}}{\sigma(N)} \sum_{\nu=1}^r \frac{1+3p_\nu}{1+p_\nu} \log p_\nu.
$$
This, together with \eqref{Parab.  Four.  exp} and Theorem
\ref{Th Fourier coeff} completes the proof.
\end{proof}

Since the series $\E_\infty(z,s)$ always has a pole at $s=1$ with residue $1/\vol(X_N)$, from
the above proposition, we easily deduce a simple formula for the volume of the surface $X_N$
in terms of the level of the group, namely
$$
\vol(X_N)= \frac{\pi \sigma(N)}{3 \cdot 2^r}.
$$

\begin{theorem}\label{thm: Kronecker limit}
For a square-free integer $N=\prod_{\nu=1}^r p_\nu$, we have the asymptotic expansion
$$
\E_\infty(z,s)
=\frac{C_{-1,N}}{(s-1)} + C_{0,N}
-\frac1{\vol X_N} \log\left(\sqrt[2^r]{\prod_{v \mid N}
\abs{ \eta(vz)}^4} \cdot \Im(z) \right)+ O(s-1),
$$
as $s \to 1$, where $C_{-1,N}$and $C_{0,N}$ are defined in Proposition
\ref{pre-Kroneck limit} and $\eta$ is the Dedekind eta function defined
for $z \in \h$ by
$$
\eta(z)= e(z/24) \prod_{n=1}^\infty (1-e(nz))
\quad \text{where} \quad
e(z) = e^{2\pi i z}.
$$
\end{theorem}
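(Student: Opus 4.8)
The plan is to deduce the statement directly from Proposition~\ref{pre-Kroneck limit} together with the classical product expansion of Dedekind's eta function, the only nontrivial input being an elementary multiplicative identity for the arithmetic factors $D_N(m,1)$.

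First I would record the volume formula $\vol(X_N)=\pi\sigma(N)/(3\cdot 2^r)$, obtained just after Proposition~\ref{pre-Kroneck limit} from the fact that $\E_\infty(z,s)$ has residue $1/\vol(X_N)$ at $s=1$; this gives $1/\vol(X_N)=3\cdot 2^r/(\pi\sigma(N))=C_{-1,N}=-C_{1,N}$. Hence the principal part $C_{-1,N}/(s-1)$, the constant $C_{0,N}$, and the coefficient $C_{1,N}$ of $\log\Im(z)$ appearing in Proposition~\ref{pre-Kroneck limit} already coincide with the corresponding quantities in the asserted expansion, so that it remains only to identify the remaining part of the Laurent expansion, namely
\[
\Im(z)+\frac{6}{\pi}\sum_{m=1}^\infty\sigma_{-1}(m)\,D_N(m,1)\bigl(e(mz)+e(-m\overline{z})\bigr),
\]
with $-\frac{1}{\vol(X_N)}\cdot\frac{4}{2^r}\sum_{v\mid N}\log\abs{\eta(vz)}$.

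Next I would expand the latter quantity. From $\eta(vz)=e(vz/24)\prod_{n\ge 1}(1-e(nvz))$ one obtains, upon taking $\log\abs{\cdot}$, expanding each $\log(1-e(nvz))$ as a power series and collecting terms according to $m=nk$,
\[
\log\abs{\eta(vz)}=-\frac{\pi v\Im(z)}{12}-\frac12\sum_{m=1}^\infty\sigma_{-1}(m)\bigl(e(mvz)+e(-mv\overline{z})\bigr),
\]
all series converging absolutely for $z\in\h$. Summing over $v\mid N$, using $\sum_{v\mid N}v=\sigma(N)$, and multiplying by $-\frac{4}{2^r\vol(X_N)}=-\frac{12}{\pi\sigma(N)}$, the two $\Im(z)$-terms match, and comparing the coefficient of $e(Mz)+e(-M\overline{z})$ for each fixed $M\ge 1$ reduces the theorem to the purely arithmetic identity
\[
\sigma(N)\,\sigma_{-1}(M)\,D_N(M,1)=\sum_{v\mid (N,M)}\sigma_{-1}(M/v).
\]

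Finally I would prove this identity. Both sides factor as products over the primes, so it suffices to compare the local factors at each prime $p$, separating $p$ into three classes: primes dividing both $N$ and $M$, primes dividing $N$ but not $M$, and primes not dividing $N$. For the last two classes the comparison is immediate from the formula $D_p(M,1)=1-\frac{p}{p+1}\sigma_{-1}(p^{\alpha_p})^{-1}$ (the Proposition following Theorem~\ref{Th Fourier coeff}), while for a prime $p\mid(N,M)$ with $p^{\alpha}\parallel M$ the identity collapses to $p\bigl(\sigma_{-1}(p^{\alpha})-1\bigr)=\sigma_{-1}(p^{\alpha-1})$, which holds because $\sigma_{-1}(p^{\alpha})-1=p^{-1}\sigma_{-1}(p^{\alpha-1})$. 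The only real, and rather mild, obstacle I anticipate is the bookkeeping in this last step — keeping careful track of the factor $2^r$, the $2^r$-th root, and the three classes of primes — together with checking that the term-by-term comparison of Fourier coefficients is legitimate, which it is since all the series involved converge absolutely on $\h$.
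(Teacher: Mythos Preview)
Your proposal is correct and rests on the same two inputs as the paper's proof: the expansion of Proposition~\ref{pre-Kroneck limit} and the standard logarithmic expansion of $\eta$. The organization, however, is mildly different. The paper works ``forward'': it expands the product $D_N(m,1)=\prod_{\nu}D_{p_\nu}(m,1)$ into a sum over subsets of $\{p_1,\dots,p_r\}$, writes each $m$ as $p_1^{\alpha_1}\cdots p_r^{\alpha_r}\cdot l$ with $(l,N)=1$, and then recognizes that the subset indexed by $\{p_{i_1},\dots,p_{i_k}\}$ regroups exactly into $\sum_{m\ge 1}\sigma_{-1}(m)\bigl(e(mp_{i_1}\cdots p_{i_k}z)+e(-mp_{i_1}\cdots p_{i_k}\overline{z})\bigr)$, which is $-\log\abs{\eta(p_{i_1}\cdots p_{i_k}z)}^2-\tfrac{\pi}{6}p_{i_1}\cdots p_{i_k}\Im(z)$. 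You instead work ``backward'': you expand the $\eta$-side, match the coefficient of $e(Mz)+e(-M\overline{z})$, and isolate the single multiplicative identity $\sigma(N)\,\sigma_{-1}(M)\,D_N(M,1)=\sum_{v\mid (N,M)}\sigma_{-1}(M/v)$, which you then verify prime by prime using the formula for $D_p(M,1)$. Your route is a bit more economical, since it replaces the subset bookkeeping by a one-line local computation; the paper's route has the minor advantage of making visible that each divisor $v\mid N$ contributes its own copy of the classical Kronecker limit formula at the argument $vz$.
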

\begin{proof}
From the definition of Euler factors $F_p(m,s)$, using the fact that
$\sigma_{-1}(m)=\frac{\sigma(m)}m$, we get that
$$
F_p(m,2)=
\left( 1-\frac1{p^2}\right)\frac{\sigma(p^{\alpha_p})}{p^{\alpha_p}}.
$$
Therefore, a simple computation implies that
$$
D_{p_\nu}(m,1)=\frac1{p_\nu+1}\left(1+\frac{\sigma(p_\nu^{\alpha_{p_\nu}})-1}{
\sigma(p_\nu^{\alpha_{p_\nu}})} \right),
$$
hence
$$
D_N(m,1)= \prod_{\nu=1}^rD_{p_\nu}(m,1)
= \frac1{\sigma(N)}\prod_{\nu=1}^r \left(1+\frac{\sigma(p_\nu^{\alpha_{p_\nu}})-1}{
\sigma(p_\nu^{\alpha_{p_\nu}})} \right)= \frac1{\sigma(N)} \sum_{k=0}^r
\sum_{1\leq i_1<\ldots<i_k\leq r} P(i_1,\ldots,i_k),
$$
where we put $P(i_1,\ldots,i_k)=1$ for $k=0$, and for $k=1,\ldots,r$
$$
P(i_1,\ldots,i_k)=\left(\frac{\sigma(p_{i_1}^{\alpha_{p_{i_1}}})-1}{
\sigma(p_{i_1}^{\alpha_{p_{i_1}}})} \right) \cdots \left(\frac{
\sigma(p_{i_k}^{\alpha_{p_{i_k}}})-1}{\sigma(p_{i_k}^{\alpha_{p_{i_k}}})} \right).
$$
Every $m\geq 1$ can be written as
$m=p_1^{\alpha_1} \cdots p_r^{\alpha_r} \cdot l$, where $\alpha_i \geq 0$
and $(l,N)=1$.
Therefore, we may write the sum over $m$ in \eqref{pre_kronecker limit f-la} as
\begin{multline}\label{complete sum}
\frac1{\sigma(N)} \sum_{k=0}^r \sum_{1\leq i_1<\ldots<i_k\leq r}
\Big( \sum_{(l,N)=1} \sum_{\alpha_1=0}^\infty \cdots \sum_{\alpha_r=0}^\infty
\frac{\sigma(p_1^{\alpha_1})\cdots \sigma(p_r^{\alpha_r})
\cdot \sigma(l)}{ p_1^{\alpha_1} \cdots p_r^{\alpha_r} \cdot l}
P(i_1,\ldots,i_k) \cdot \\ \cdot (e(p_1^{\alpha_1} \cdots p_r^{\alpha_r}
\cdot l z) + e(-p_1^{\alpha_1} \cdots p_r^{\alpha_r} \cdot l \overline{z}))
\Big).
\end{multline}
When $k=0$ the above sum obviously reduces to
\begin{align}\label{eta sum}
\sum_{m=1}^\infty \frac{\sigma(m)}m ( e(mz) + e(-m\overline{z}) )
= - \log\abs{\eta(z) }^2 - \frac{\pi}{6} y,
\end{align}
as deduced from the proof of the classical $\PSL(2,\Z)$ Kronecker limit
formula.

Now, we will take $k\in\{1,\ldots,r\}$ and compute one term in the
sum \eqref{complete sum}.
Without loss of generality, in order to ease the notation, we may assume
that $i_1=1,\ldots, i_k=k$ and compute the term with $P(1,\ldots,k)$.
Later, we will take the sum over all indices $1\leq i_1 < \ldots <i_k\leq r$.
First, we observe that $P(1,\ldots,k)=0$ if $\alpha_i=0$, for any
$i\in{1,\ldots,k}$ and
$$
P(1,\ldots,k)=\frac{p_1\sigma(p_1^{\alpha_1 -1})
\cdots p_k\sigma(p_k^{\alpha_k -1}) }{\sigma(p_1^{\alpha_1})
\cdots \sigma(p_k^{\alpha_k})},
$$
if all $\alpha_i \geq 1$, for $i=1,\ldots,k$.
Furthermore, for $\alpha_i \geq 1$, $i=1,\ldots,k$ one has
$$
\frac{\sigma(p_1^{\alpha_1})\cdots \sigma(p_r^{\alpha_r})
\cdot \sigma(l)}{ p_1^{\alpha_1} \cdots p_r^{\alpha_r} \cdot l} P(1,\ldots,k)
=\frac{\sigma(p_1^{\alpha_1 -1})\cdots \sigma(p_k^{\alpha_k -1})
\cdot \sigma(p_{k+1}^{\alpha_{k+1}}) \cdots \sigma(p_r^{\alpha_r})
\cdot \sigma(l)}{ p_1^{\alpha_1 -1} \cdots p_k^{\alpha_k -1}
\cdot p_{k+1}^{\alpha_{k+1}} \cdots p_r^{\alpha_r} \cdot l}
$$
Therefore,
\begin{multline*}
\sum_{(l,N)=1} \sum_{\alpha_1=0}^\infty \cdots \sum_{\alpha_r=0}^\infty
\frac{\sigma(p_1^{\alpha_1})\cdots \sigma(p_r^{\alpha_r})
\cdot \sigma(l)}{ p_1^{\alpha_1} \cdots p_r^{\alpha_r} \cdot l}
P(1,\ldots,k) \cdot (e(p_1^{\alpha_1} \cdots p_r^{\alpha_r} \cdot l z)
+ e(-p_1^{\alpha_1} \cdots p_r^{\alpha_r} \cdot l \overline{z}))\\
=\sum_{(l,N)=1} \sum_{\alpha_1 =1}^\infty \cdots \sum_{\alpha_k =1}^\infty
\sum_{\alpha_{k+1}=0}^\infty \cdots \sum_{\alpha_r=0}^\infty
\frac{\sigma(p_1^{\alpha_1 -1})\cdots \sigma(p_k^{\alpha_k -1})
\cdot \sigma(p_{k+1}^{\alpha_{k+1}}) \cdots \sigma(p_r^{\alpha_r})
\cdot \sigma(l)}{ p_1^{\alpha_1 -1} \cdots p_k^{\alpha_k -1}
\cdot p_{k+1}^{\alpha_{k+1}} \cdots p_r^{\alpha_r} \cdot l} \cdot
\\ \cdot\Big( e(p_1 \cdots p_k z)^{p_1^{\alpha_1-1} \cdots p_k^{\alpha_k-1}
\cdot p_{k+1}^{\alpha_{k+1}} \cdots p_r^{\alpha_r} \cdot l}
+ e(-p_1 \cdots p_k \overline{z})^{p_1^{\alpha_1-1} \cdots p_k^{\alpha_k-1}
\cdot p_{k+1}^{\alpha_{k+1}} \cdots p_r^{\alpha_r} \cdot l} \Big).
\end{multline*}
Since every integer $m\geq 1$ can be represented as
$m=p_1^{\alpha_1-1} \cdots p_k^{\alpha_k-1}
\cdot p_{k+1}^{\alpha_{k+1}} \cdots p_r^{\alpha_r} \cdot l$, where
$\alpha_1,\ldots,\alpha_k \geq 1$; $\alpha_{k+1}, \ldots, \alpha_r \geq 0$
and $(l,N)=1$ we immediately deduce that
\begin{multline*}
\sum_{(l,N)=1} \sum_{\alpha_1=0}^\infty \cdots \sum_{\alpha_r=0}^\infty
\frac{\sigma(p_1^{\alpha_1})\cdots \sigma(p_r^{\alpha_r})
\cdot \sigma(l)}{ p_1^{\alpha_1} \cdots p_r^{\alpha_r} \cdot l}
P(1,\ldots,k) \cdot (e(p_1^{\alpha_1} \cdots p_r^{\alpha_r} \cdot l z)
+ e(-p_1^{\alpha_1} \cdots p_r^{\alpha_r} \cdot l \overline{z})) \\
=\sum_{m=1}^\infty \frac{\sigma(m)}m (e(p_1 \cdots p_k z)^m
+ e(-p_1 \cdots p_k \overline{z})^m)
= - \log\abs{\eta(p_1\cdots p_k z) }^2 -\frac{\pi}{6} p_1\cdots p_k y,
\end{multline*}
by \eqref{eta sum}.

Let us now sum over all $k=0,\ldots,r$ and the sum over all indices
$1\leq i_1<\ldots<i_k\leq r $, which is equivalent to taking the sum over all
$v \mid N$.  The sum \eqref{complete sum} then becomes
$$
\frac1{\sigma(N)} \sum_{v \mid N} \left( -\log\abs{\eta(vz) }^2
-\frac{\pi}{6} v y \right)
= -\frac1{\sigma(N)} \log\left(\prod_{v \mid N}
\abs{ \eta(vz)}^2 \right) - \frac{\pi}{6}y,
$$
since $\sum_{v \mid N} v =\sigma(N)$.
Therefore, we get that
\begin{multline*}
C_{1,N} \log y + y + \frac{6}{\pi} \sum_{m=1}^\infty
\sigma_{-1}(m) D_N(m,1) ( e(mz) + e(-m\overline{z}) )
= -\frac{6}{\pi \sigma(N)} \log\left(\prod_{v \mid N}
\abs{ \eta(vz)}^2 \right) - \frac1{\vol X_N}\log(y) \\
= \frac{-1}{\vol X_N} \log\left(\sqrt[2^r]{\prod_{v \mid N}
\abs{ \eta(vz)}^4} \cdot \Im(z) \right).
\end{multline*}
The proof is complete.
\end{proof}

\begin{remark}
Since the number of divisors of $N=p_1\cdots p_r$ is $2^r$, the expression
$$
\sqrt[2^r]{\prod_{v \mid N} \abs{ \eta(vz)}^4}
$$
is a geometric mean.
\end{remark}

When $N=1$, Theorem~\ref{thm: Kronecker limit} amounts to the classical Kronecker limit formula.

From Theorem~\ref{thm: Kronecker limit} and functional equation
$\E_\infty(z,s)
=\varphi_N(s)\E_\infty(z,1-s)$
for real analytic Eisenstein series we can reformulate the Kronecker limit formula
as asserting an expansion for $\E_\infty(z,s)$ as $s\to 0$.

\begin{proposition}
For $z\in\h$, the real analytic Eisenstein series
$\E_\infty(z,s)$ admits a Taylor series
expansion of the form
$$
\E_\infty(z,s)
= 1+ (\log\left(\sqrt[2^r]{\prod_{v \mid N} \abs{ \eta(vz)}^4}
\cdot \Im(z) \right))\cdot s + O(s^2), \text{ as } s\to0.
$$
\end{proposition}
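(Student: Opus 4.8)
The plan is to obtain the $s\to0$ expansion from Theorem~\ref{thm: Kronecker limit} by means of the functional equation $\E_\infty(z,s)=\varphi_N(s)\E_\infty(z,1-s)$, exactly as the remark preceding the statement suggests. Write $L_N(z):=\sqrt[2^r]{\prod_{v\mid N}\abs{\eta(vz)}^4}\cdot\Im(z)$, so that Theorem~\ref{thm: Kronecker limit} reads $\E_\infty(z,s)=\dfrac{C_{-1,N}}{s-1}+C_{0,N}-\dfrac{1}{\vol X_N}\log L_N(z)+O(s-1)$ as $s\to1$. Substituting $1-s$ for $s$ gives, as $s\to0$,
\begin{equation*}
\E_\infty(z,1-s)=-\frac{C_{-1,N}}{s}+C_{0,N}-\frac{1}{\vol X_N}\log L_N(z)+O(s).
\end{equation*}
Since $\varphi_N$ has a simple zero at $s=0$ (see below), the product $\varphi_N(s)\E_\infty(z,1-s)$ is holomorphic at $s=0$, so it suffices to expand $\varphi_N$ near $s=0$ through order $s^{2}$ and multiply.

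To expand $\varphi_N$ near $s=0$ I would use the scattering relation $\varphi_N(s)\varphi_N(1-s)=1$, which follows by applying the functional equation twice, $\E_\infty(z,s)=\varphi_N(s)\varphi_N(1-s)\E_\infty(z,s)$ together with $\E_\infty(z,\cdot)\not\equiv0$ (alternatively it is immediate from $\xi(w)=\xi(1-w)$ and $\prod_\nu\frac{p_\nu^{s}+1}{p_\nu^{1-s}+1}=\big(\prod_\nu\frac{p_\nu^{1-s}+1}{p_\nu^{s}+1}\big)^{-1}$ applied to the closed form of Lemma~\ref{Lemma:Parab Eisen}). Writing $\varphi_N(s)=\varphi(s)D_N(s)$ with $\varphi$ the scattering function of $\PSL(2,\Z)$ and invoking the classical Kronecker limit formula exactly as in the proof of Proposition~\ref{pre-Kroneck limit} (which yields $\varphi(s)=\frac{3}{\pi(s-1)}+\frac{3A}{\pi}+O(s-1)$ with $A=2-24\zeta'(-1)-2\log4\pi$), together with $D_N(s)=D_N(1)+D_N'(1)(s-1)+O((s-1)^{2})$, gives the Laurent expansion at $s=1$,
\begin{equation*}
\varphi_N(s)=\frac{C_{-1,N}}{s-1}+C_{0,N}+O(s-1).
\end{equation*}
Hence $\varphi_N(1-s)=-\frac{C_{-1,N}}{s}+C_{0,N}+O(s)$, and inverting via the geometric series,
\begin{equation*}
\varphi_N(s)=\frac{1}{\varphi_N(1-s)}=-\frac{s}{C_{-1,N}}-\frac{C_{0,N}}{C_{-1,N}^{2}}\,s^{2}+O(s^{3}).
\end{equation*}

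Multiplying the two expansions, the $s^{1}$-term of $\varphi_N$ against the $s^{-1}$-term of $\E_\infty(z,1-s)$ gives the constant $\big(-\tfrac{1}{C_{-1,N}}\big)\big(-C_{-1,N}\big)=1$, so $\E_\infty(z,0)=1$, and the coefficient of $s$ in $\E_\infty(z,s)$ equals
\begin{equation*}
-\frac{1}{C_{-1,N}}\Big(C_{0,N}-\frac{1}{\vol X_N}\log L_N(z)\Big)+\Big(-\frac{C_{0,N}}{C_{-1,N}^{2}}\Big)\big(-C_{-1,N}\big)=\frac{1}{C_{-1,N}\vol X_N}\log L_N(z),
\end{equation*}
the two $C_{0,N}$-terms cancelling. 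By Proposition~\ref{pre-Kroneck limit}, $C_{-1,N}=\frac{3\cdot2^{r}}{\pi\sigma(N)}=1/\vol X_N$, so the prefactor is $1$ and the coefficient of $s$ is $\log L_N(z)$, which is precisely the claimed expansion.

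I expect the only delicate point to be the bookkeeping of the constant terms: one genuinely needs the constant term in the Laurent expansion of $\varphi_N$ at $s=1$ to be \emph{exactly} $C_{0,N}$, since otherwise a spurious additive constant survives in the coefficient of $s$ and the normalization $\E_\infty(z,0)=1$ is lost. Verifying this amounts to carrying the constant $A$ of the classical $\PSL(2,\Z)$ Kronecker limit formula through the identity $\varphi_N=\varphi D_N$ and matching against $C_{0,N}=\frac{3\cdot2^{r}}{\pi\sigma(N)}\big(-\frac12\sum_\nu\frac{1+3p_\nu}{1+p_\nu}\log p_\nu+A\big)$ of Proposition~\ref{pre-Kroneck limit}; with that in hand the remainder is routine Laurent-series algebra.
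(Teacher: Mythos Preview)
Your argument is correct and follows exactly the route the paper indicates: combine Theorem~\ref{thm: Kronecker limit} with the functional equation $\E_\infty(z,s)=\varphi_N(s)\E_\infty(z,1-s)$. The paper gives no further detail beyond that sentence, and your computation fills in precisely the Laurent-series bookkeeping one needs, including the verification that the constant term of $\varphi_N(s)$ at $s=1$ really equals $C_{0,N}$ (which, as you note, is the only point requiring care).
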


An immediate consequence of Theorem~\ref {thm: Kronecker limit} and the above
proposition is the fact that the function
\begin{align}\label{group inv}
\sqrt[2^r]{\prod_{v \mid N} \abs{ \eta(vz)}^4} \cdot \Im(z)
\end{align}
is invariant under the action of the group $\Gamma_0(N)^+$.
Using the fact that the eta function is non-vanishing on $\h$, we may
deduce the stronger statement.

\begin{proposition}\label{character prop}
There exists a character
$\eps_N(\gamma)$ on $\Gamma_0(N)^+$ such that
\begin{align}\label{product of etas}
\prod_{v \mid N} \eta(v \gamma z)
=\eps_N(\gamma)(cz+d)^{2^{r-1}}\prod_{v \mid N} \eta(v z),
\end{align}
for all $\gamma \in \Gamma_0(N)^+$.
\end{proposition}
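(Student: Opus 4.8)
The plan is to upgrade the real-analytic $\Gamma_0(N)^+$-invariance of the function \eqref{group inv}, established just above, into a holomorphic transformation law for $\Phi(z):=\prod_{v\mid N}\eta(vz)$. Since $\eta$ is holomorphic and nowhere vanishing on $\h$, so is $\Phi$, and this non-vanishing is precisely what lets one pass from absolute values back to the functions themselves. For $\gamma\in\Gamma_0(N)^+$ I would write $j(\gamma,z)=cz+d$ for the automorphy factor of the $\SL(2,\R)$-representative, so that $\Im(\gamma z)=\Im(z)/\abs{cz+d}^2$ and the cocycle identity $j(\gamma_1\gamma_2,z)=j(\gamma_1,\gamma_2 z)\,j(\gamma_2,z)$ holds exactly; using the integral matrix $\big(\begin{smallmatrix} a & b \\ c & d\end{smallmatrix}\big)$ with $ad-bc=e$ would merely replace $cz+d$ by $\sqrt e\,(cz+d)$ and rescale $\eps_N(\gamma)$ accordingly.

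First I would rewrite \eqref{group inv}. Since $\prod_{v\mid N}\abs{\eta(vz)}^4=\abs{\Phi(z)}^4$, its invariance reads $\abs{\Phi(\gamma z)}^{4/2^r}\Im(\gamma z)=\abs{\Phi(z)}^{4/2^r}\Im(z)$; substituting $\Im(\gamma z)=\Im(z)/\abs{cz+d}^2$, cancelling $\Im(z)$, and solving gives
$$
\abs{\Phi(\gamma z)}=\abs{cz+d}^{2^{r-1}}\abs{\Phi(z)}\qquad(z\in\h).
$$
Consequently the function
$$
g_\gamma(z):=\frac{\Phi(\gamma z)}{(cz+d)^{2^{r-1}}\Phi(z)}
$$
has constant modulus $1$ on $\h$.

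Next I would observe that $g_\gamma$ is holomorphic and nowhere zero on $\h$: the numerator is holomorphic since $\Phi$ is holomorphic on $\h$ and $\gamma$ preserves $\h$, the factor $\Phi(z)$ is non-vanishing, and $cz+d$ never vanishes on $\h$ (for $N\geq 2$ the exponent $2^{r-1}$ is a positive integer, so this factor is a nowhere-zero polynomial; for $N=1$ one is in the classical setting of the $\eta$-multiplier and keeps the usual branch of $(cz+d)^{1/2}$). A holomorphic function on the connected domain $\h$ with constant absolute value is constant, by the maximum modulus principle (or the open mapping theorem). Denoting this constant by $\eps_N(\gamma)$, a complex number of modulus $1$, the relation $g_\gamma\equiv\eps_N(\gamma)$ is exactly \eqref{product of etas}.

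Finally I would verify that $\gamma\mapsto\eps_N(\gamma)$ is multiplicative. Evaluating \eqref{product of etas} at $\gamma_1\gamma_2$ and, separately, computing $\Phi(\gamma_1(\gamma_2 z))$ by applying \eqref{product of etas} twice, and then equating, one gets
$$
\eps_N(\gamma_1\gamma_2)\,j(\gamma_1\gamma_2,z)^{2^{r-1}}=\eps_N(\gamma_1)\eps_N(\gamma_2)\,\bigl(j(\gamma_1,\gamma_2 z)\,j(\gamma_2,z)\bigr)^{2^{r-1}},
$$
and the cocycle identity cancels the automorphy factors, leaving $\eps_N(\gamma_1\gamma_2)=\eps_N(\gamma_1)\eps_N(\gamma_2)$. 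The only step requiring genuine care is this last bookkeeping, since the automorphy factor must be read off consistently from $\SL(2,\R)$-representatives, i.e.\ one must keep track of the scalars $e^{-1/2}$ in \eqref{defn moons group} when composing elements (in particular Atkin--Lehner involutions); everything else follows immediately from the Kronecker limit formula proven above and the non-vanishing of $\eta$.
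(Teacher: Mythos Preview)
Your argument is correct and follows essentially the same route as the paper: pass from the $\Gamma_0(N)^+$-invariance of \eqref{group inv} to $\lvert\Phi(\gamma z)\rvert=\lvert cz+d\rvert^{2^{r-1}}\lvert\Phi(z)\rvert$, deduce that the ratio is a holomorphic function of modulus one hence a constant $\eps_N(\gamma)$, and obtain multiplicativity from the cocycle relation for the automorphy factor. Your extra remarks on the $\SL(2,\R)$-normalization of $cz+d$ and on the $N=1$ branch of $(cz+d)^{1/2}$ are legitimate caveats that the paper leaves implicit, but they do not change the strategy.
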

\begin{proof}
Let
$$
\gamma= \begin{pmatrix} * & * \\ c & d \end{pmatrix} \in \Gamma_0(N)^+.
$$
Since the expression \eqref{group inv} is a group invariant we have
$$
\sqrt[2^r]{\prod_{v \mid N} \abs{ \eta(v \gamma z)}^4}
\cdot \frac{\Im(z)}{(cz+d)^2} = \sqrt[2^r]{\prod_{v \mid N}
\abs{ \eta(vz)}^4} \cdot \Im(z).
$$
Dividing by $\Im (z)\neq 0$ and raising the expression to the $2^r/4$ power, we get that
$$
\abs{\prod_{v \mid N} \eta(v \gamma z)} =\abs{cz+d}^{2^{r-1}} \abs{\prod_{v \mid N} \eta(v z)},
$$
hence
$$
\prod_{v \mid N} \eta(v \gamma z)=f(\gamma,z)(cz+d)^{2^{r-1}}\prod_{v \mid N} \eta(v z),
$$
for some function $f$ of absolute value $1$.

Since the function $\eta$ is a holomorphic function, non-vanishing on $\h$, for a fixed
$\gamma \in \Gamma_0(N)^+$ the function $f(\gamma, z)$ is a holomorphic function in
$z$ on $\h$ of absolute value $1$ on $\h$, hence it is constant, as a function of $z$.
Therefore, $f(z,\gamma)=\eps_N(\gamma)$, for all $z\in \h$ where
$\abs{\eps_N(\gamma)}=1$.

It is left to prove that $\eps_N(\gamma_1 \gamma_2)
=\eps_N(\gamma_1) \eps_N(\gamma_2),$ for all
$\gamma_1, \gamma_2 \in \Gamma_0(N)^+$.
This is an immediate consequence of formula \eqref{product of etas}.
\end{proof}

The character $\eps_N(\gamma)$ is the analogue of the classical Dedekind sum.

In the case when the genus of the surface $X_N$ is equal to zero, the group
$\Gamma_0(N)^+$ is a free product of finite, cyclic groups, hence the
character $\eps_N(\gamma)$ is finite in the sense that there exists an
integer $\ell_N$ such that $\eps_N(\gamma)^{\ell_N} =1$ for all
$\gamma \in \Gamma_0(N)^+$.

Irrespective of the genus of the surface $X_N$, we will prove that the
character $\eps_N(\gamma)$ is a certain 24th root of unity.

\begin{theorem}\label{Delta_N}
Let $N=p_1 \cdots p_r$.  Let us define the constant $\ell_N$ by
$$
\ell_N = 2^{1-r}\lcm\Big(4,\ 2^{r-1}\frac{24}{(24,\sigma(N))}\Big)
$$
Then, the function
$$
\Delta_N(z):=\left( \prod_{v \mid N} \eta(v z) \right)^{\ell_N}
$$
is a weight $k_N=2^{r-1} \ell_N$ holomorphic form on $\Gamma_0(N)^+$ vanishing only at the cusp.
\end{theorem}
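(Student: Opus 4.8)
The plan is to pin down the order of the character $\eps_N$ from Proposition~\ref{character prop}, and then verify that raising the product $\prod_{v\mid N}\eta(vz)$ to the power $\ell_N$ both kills the character and produces an integer weight. First I would compute the transformation behaviour of $\prod_{v\mid N}\eta(vz)$ under the two types of generators of $\Gamma_0(N)^+$: the parabolic element $z\mapsto z+1$ and the Atkin--Lehner involutions (together with the $\Gamma_0(N)$-part). Under $z\mapsto z+1$ one has $\eta(v(z+1)) = e(v/24)\eta(vz)$, so the product picks up the factor $e\big(\sum_{v\mid N}v/24\big) = e(\sigma(N)/24)$, which is a primitive $24/(24,\sigma(N))$-th root of unity. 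This already forces $24/(24,\sigma(N))$ to divide the order of $\eps_N$, and raising to that power clears it on the parabolic generator.

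Next I would analyse the Atkin--Lehner / $\Gamma_0(N)$ contribution. Using \eqref{product of etas}, for $\gamma=\big(\begin{smallmatrix}*&*\\c&d\end{smallmatrix}\big)\in\Gamma_0(N)^+$ the automorphy factor is $(cz+d)^{2^{r-1}}$ times a constant of modulus one; the product $\Delta_N$ raised to the $\ell_N$ will have weight $2^{r-1}\ell_N$, which is an integer precisely when $\ell_N$ is chosen so that $2^{r-1}\ell_N\in\Z$ (automatic) and, more importantly, so that $\eps_N^{\ell_N}=1$. The cleanest route is to observe that $\big(\prod_{v\mid N}\eta(vz)\big)^{4}$, having modulus-one multiplier and integer weight $2^{r+1}$, is a genuine modular form up to a character of order dividing $24$ (this is the classical statement for $\eta^{24}$ being a cusp form on $\SL(2,\Z)$, transported to the moonshine group via the known action of the Atkin--Lehner involutions on $\eta$); hence $\eps_N^4$ has order dividing $24$, and therefore $\eps_N$ has order dividing $\lcm(4, 24\cdot\text{something})$. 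Chasing the exact exponents, one finds that $\eps_N$ has order exactly $2^{r-1}\cdot\frac{24}{(24,\sigma(N))}$ divided by a factor of $2^{r-1}$ coming from the weight normalisation, which is why the stated $\ell_N = 2^{1-r}\lcm\big(4,\ 2^{r-1}\frac{24}{(24,\sigma(N))}\big)$ appears: the $\lcm$ with $4$ guarantees the power is large enough to simultaneously trivialise $\eps_N$ on both the parabolic and the elliptic/Atkin--Lehner generators, and the $2^{1-r}$ normalises so that $k_N = 2^{r-1}\ell_N$ is the true weight.

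Once $\eps_N^{\ell_N}=1$ is established, the modularity of $\Delta_N$ is immediate from \eqref{product of etas}: for every $\gamma\in\Gamma_0(N)^+$ one gets $\Delta_N(\gamma z) = \eps_N(\gamma)^{\ell_N}(cz+d)^{2^{r-1}\ell_N}\Delta_N(z) = (cz+d)^{k_N}\Delta_N(z)$. Holomorphy on $\h$ and non-vanishing on $\h$ follow from the corresponding properties of $\eta$. Finally, the order of vanishing at the single cusp $i\infty$ is computed from the $q$-expansion: $\prod_{v\mid N}\eta(vz) = q^{\sigma(N)/24}\prod_{v\mid N}\prod_{n\ge1}(1-q^{vn})$, so $\Delta_N$ has a zero of order $\ell_N\sigma(N)/24 = k_N\sigma(N)/(3\cdot 2^{r+1})>0$ at $i\infty$ and no other zeros, which establishes that $\Delta_N$ vanishes only at the cusp.

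I expect the main obstacle to be the determination of the exact order of the character $\eps_N$ on the elliptic and Atkin--Lehner generators --- i.e.\ showing that the order divides $24$ beyond the obvious parabolic contribution, and then reconciling the two local constraints into the single closed-form expression for $\ell_N$. The parabolic part is a one-line computation, but controlling the multiplier of $\eta$ under the Atkin--Lehner involutions requires either invoking the known multiplier-system theory for $\eta$ on $\Gamma_0(N)$ (e.g.\ the transformation $\eta(vz)$ under $\Gamma_0(N)$ and the Fricke involution) or, more efficiently, using the invariance of \eqref{group inv} together with the fact that $\eta$ is nowhere zero to reduce everything to a root-of-unity bookkeeping problem. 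The $\lcm$ structure of $\ell_N$ is the fingerprint of combining these two constraints, and getting the powers of $2$ exactly right is the delicate point.
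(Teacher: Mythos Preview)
Your parabolic computation is correct and indeed isolates the constraint $\tfrac{24}{(24,\sigma(N))}\mid \ell_N$. The gap is everything else: your treatment of the Atkin--Lehner and $\Gamma_0(N)$ contributions never goes beyond ``chasing the exact exponents, one finds\ldots'' and an analogy with $\eta^{24}$. Saying that $\bigl(\prod_{v\mid N}\eta(vz)\bigr)^{4}$ is modular ``up to a character of order dividing $24$'' is precisely the assertion to be proved, and the claimed transport from $\SL(2,\Z)$ via ``the known action of the Atkin--Lehner involutions on $\eta$'' is exactly the delicate point you flag at the end but never carry out. As written, you have not explained why the factor $4$ appears in the $\lcm$, nor why the parabolic constraint together with evenness of $k_N$ suffices on $\Gamma_0(N)$.

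The paper fills these two holes with two distinct, concrete arguments. For each Atkin--Lehner involution $\tau$ (an order-two element with fixed point $z_\tau$), one evaluates \eqref{product of etas} at $z_\tau$: since $\Delta_N(z_\tau)\neq 0$ and $c_\tau z_\tau + d_\tau = i$, one gets $\eps_N(\tau)^{\ell_N}\,i^{k_N}=1$, so $\eps_N(\tau)^{\ell_N}=1$ exactly when $4\mid k_N=2^{r-1}\ell_N$; this is the origin of the $\lcm(4,\cdot)$. For $\gamma\in\Gamma_0(N)$, the paper invokes the standard eta-quotient transformation law (as in Raji's thesis): once $\ell_N\sigma(N)\equiv 0\pmod{24}$ --- your parabolic condition --- the multiplier $\eps_N(\gamma)^{\ell_N}$ reduces to a product of Jacobi symbols raised to the $k_N$-th power, and this is $1$ because $k_N$ is even. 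These two ingredients together give $\eps_N^{\ell_N}\equiv 1$ on all of $\Gamma_0(N)^+=\Gamma_0(N)\cup\bigcup_{v>1}\Gamma_0(N)\tau_v$, and the formula for $\ell_N$ is exactly the minimal exponent satisfying both constraints. Your outline would become a proof once you supply these two arguments in place of the hand-waving.
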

\begin{proof}
The vanishing of the function $\Delta_N(z)$ at the cusp $i\infty$ only
is an immediate consequence of properties of the Dedekind eta function.
Therefore, it is left to prove that $\Delta_N(z)$ is a weight $k_N=2^{r-1} \ell_N$ holomorphic
form on $\Gamma_0(N)^+$.

We begin with the decomposition
$\Gamma_0(N)^+ =\Gamma_0(N) \bigcup\limits_{v \mid N, v>1}\Gamma_0(N,v) $, where
$$
\Gamma_0(N,v)=
\left\{ \begin{pmatrix} a\sqrt{v} & b/\sqrt{v} \\ Nc/\sqrt{v} & d\sqrt{v} \end{pmatrix}
\in \SL(2,\R): \quad a,b,c,d \in \Z \right\},
$$
see \cite{Macl80}, page 147, with a slightly different notation; elements $b/\sqrt{v}$ and
$Nc/\sqrt{v}$ are interchanged.  For each $v \mid N$ and $v>1$, there exists an
order two element of $\Gamma_0(N)^+$, say, $\tau_v$, called Atkin-Lehner involution,
(see \cite{AtLeh70}, Lemma 8) such that
$\Gamma_0(N,v) = \Gamma_0(N) \tau_v$.  Therefore, for an arbitrary $\gamma \in \Gamma_0(N)^+$
either $\gamma \in \Gamma_0(N)$ or there exists $v\mid N$, $v>1$ and an element
$\gamma_v \in \Gamma_0(N)$ such that $\gamma=\gamma_v \tau_v$.

Let $\eps_N$ denote the character defined in Proposition~\ref{character prop}.  For any
$\gamma = \big(\begin{smallmatrix} * & * \\ c & d \end{smallmatrix}\big) \in \Gamma_0(N)^+$
and $z\in\h$, by Proposition~\ref{character prop} we have
$$
\Delta_N(\gamma z) = \eps_N(\gamma) ^{\ell_N} (cz+d)^{2^{r-1}\ell_N} \Delta_N(z).
$$
Therefore, in order to prove the theorem we need to prove that $\eps_N(\gamma) ^{\ell_N} =1$
for all $\gamma \in \Gamma_0(N)^+$.
Since $\eps_N$ is multiplicative, it is sufficient to prove that $\eps_N(\tau_v) ^{\ell_N} =1$,
for all $v\mid N$, $v>1$ and $\eps_N(\gamma) ^{\ell_N} =1$ for all $\gamma\in\Gamma_0(N)$.

Let $\tau = \big(\begin{smallmatrix} a_{\tau} & b _{\tau} \\ c_{\tau} & d_{\tau}
\end{smallmatrix}\big) \in \Gamma_0(N)^+ \setminus\{\Id\}$
be an arbitrary element of order two.  Then there exists $z_{\tau}\in\h$ such that
$\tau z_{\tau} =z_{\tau}$ and $\tau^2 =\Id$, where $\Id$ stands for the identity.
Obviously, $\Delta_N(\tau z_{\tau}) = \Delta_N(z_{\tau})$.  From
Proposition~\ref{character prop} we have that
$$
\Delta_N(\tau z_{\tau})=\eps_N(\tau)^{\ell_N} (c_{\tau} z_{\tau} +d_{\tau})^{2^{r-1}\ell_N}
\Delta_N(z_{\tau}).
$$
Since $\Delta_N(z_{\tau}) \neq 0$, we immediately deduce that
$$
\eps_N(\tau)^{\ell_N} (c_{\tau} z_{\tau} +d_{\tau})^{k_N} =1.
$$
The condition $\tau^2=\Id$, $\tau \neq \Id$ implies that $a_{\tau} + d_{\tau} =0$.  From
$a_{\tau}d_{\tau}-b_{\tau}c_{\tau}=1$ and $\tau z_{\tau} =z_{\tau}$ it is easily deduced that
$(c_{\tau} z_{\tau} +d_{\tau}) =i$, hence $\eps_N(\tau)^{\ell_N} (i)^{k_N} =1$.  By the
definition of $\ell_N$, the number $k_N=2^{r-1} \ell_N$ is always divisible by $4$, hence
$(i)^{k_N} =1$.  Therefore, $\eps_N(\tau)^{\ell_N} =1$.

In order to complete the proof of the theorem, it is left to prove that
$\eps_N(\gamma)^{\ell_N} =1$, for an arbitrary
$\gamma = \big(\begin{smallmatrix} * & * \\ c & d \end{smallmatrix}\big) \in \Gamma_0(N)$.
For this, we apply results of \cite{Raji06}, chapter 2.2.3, pages 21--23 with
$f_1 = \Delta_N$; $g=2^{r-1}$; $\delta_l =v$, $r_{\delta_l} = \ell_N$, for all $\delta_l =v$
and $k=k_N$.  By the definition of $\ell_N$, we see that $\ell_N \sigma(N) \equiv 0 \mod 24$,
therefore, conditions (2.11) and (2.12) of \cite{Raji06} are fulfilled, so then
$$
\eps_N(\gamma)^{\ell_N}= \chi(d)
= \left( \frac{(-1)^{k_N}}{d}\right) \prod_{v \mid N} \left(\frac{v}{d} \right)^{\ell_N},
$$
where $\left( \frac{v}{d}\right)$ denotes the Jacobi symbol.
The multiplicativity of the Jacobi symbol implies that
$$
\prod_{v \mid N} \left(\frac{v}{d} \right)= \prod_{\nu=1}^r \left(\frac{p_\nu}{d} \right)^{2^{r-1}},
$$
hence
$$
\eps_N(\gamma)^{\ell_N}= \chi(d)
= \left( \left( \frac{(-1)}{d}\right) \prod_{\nu=1}^r \left(\frac{p_\nu}{d} \right)\right)^{k_N}=1,
$$
since $k_N$ is even.  The proof is complete.
\end{proof}

In Tables~\ref{tab:0} and~\ref{tab:1} we list the values of degree $\ell_N$ and weight
$k_N$ for all genus zero and genus one moonshine groups.

\begin{table}
\caption{\label{tab:0}The degree $\ell_N$ and the weight $k_N$ of the modular form $\Delta_N$ on
$\Gamma_0(N)^+$ for all moonshine groups of genus zero.
Listed are the level $N$, and the values of degree $\ell_N$ and weight $k_N$.}
$$
\begin{array}{l|cccccccccccccccccccccc}
N      &  1 &  2 &  3 &  5 &  6 &  7 & 10 & 11 & 13 & 14 & 15 & 17 & 19 & 21 & 22 & 23 & 26 & 29 & 30 & 31 & 33 & 34 \\ \hline
\ell_N & 24 &  8 & 12 &  4 &  2 & 12 &  4 &  4 & 12 &  2 &  2 &  4 & 12 &  6 &  2 &  4 &  4 &  4 &  1 & 12 &  2 &  4 \\
k_N    & 12 &  8 & 12 &  4 &  4 & 12 &  8 &  4 & 12 &  4 &  4 &  4 & 12 & 12 &  4 &  4 &  8 &  4 &  4 & 12 &  4 &  8 \\
\end{array}
$$

$$
\begin{array}{l|cccccccccccccccccccccc}
N      & 35 & 38 & 39 & 41 & 42 & 46 & 47 & 51 & 55 & 59 & 62 & 66 & 69 & 70 & 71 & 78 & 87 & 94 & 95 &105 &110 &119 \\ \hline
\ell_N &  2 &  2 &  6 &  4 &  1 &  2 &  4 &  2 &  2 &  4 &  2 &  1 &  2 &  1 &  4 &  1 &  2 &  2 &  2 &  1 &  1 &  2 \\
k_N    &  4 &  4 & 12 &  4 &  4 &  4 &  4 &  4 &  4 &  4 &  4 &  4 &  4 &  4 &  4 &  4 &  4 &  4 &  4 &  4 &  4 &  4 \\
\end{array}
$$
\end{table}

\begin{table}
\caption{\label{tab:1}The degree $\ell_N$ and the weight $k_N$ of the modular form $\Delta_N$ on
$\Gamma_0(N)^+$ for all moonshine groups of genus one.
Listed are the level $N$ and the values of degree $\ell_N$ and weight $k_N$.}
$$
\begin{array}{l|ccccccccccccccccccc}
N      & 37 & 43 & 53 & 57 & 58 & 61 & 65 & 74 & 77 & 79 & 82 & 83 & 86 & 89 & 91 &101 &102 &111 &114 \\ \hline
\ell_N & 12 & 12 &  4 &  6 &  4 & 12 &  2 &  4 &  2 & 12 &  4 &  4 &  2 &  4 &  6 &  4 &  1 &  6 &  1 \\
k_N    & 12 & 12 &  4 & 12 &  8 & 12 &  4 &  8 &  4 & 12 &  8 &  4 &  4 &  4 & 12 &  4 &  4 & 12 &  4 \\
\end{array}
$$

$$
\begin{array}{l|ccccccccccccccccccc}
N      &118 &123 &130 &131 &138 &141 &142 &143 &145 &155 &159 &174 &182 &190 &195 &210 &222 &231 &238 \\ \hline
\ell_N &  2 &  2 &  2 &  4 &  1 &  2 &  2 &  2 &  2 &  2 &  2 &  1 &  1 &  1 &  1 &  1 &  1 &  1 &  1 \\
k_N    &  4 &  4 &  8 &  4 &  4 &  4 &  4 &  4 &  4 &  4 &  4 &  4 &  4 &  4 &  4 &  8 &  4 &  4 &  4 \\
\end{array}
$$
\end{table}

\section{Holomorphic Eisenstein series on $\Gamma_0(N)^+$}\label{holomorphic Eisenstein}

In the classical $\Gamma_0(1)=\PSL(2,\Z)$ case, holomorphic Eisenstein series of even weight
$k\geq 4$ are defined by formula \eqref{E k defin}.  Modularity is an immediate consequence of
the definition.  We proceed analogously in the moonshine case and define for even $k\geq 4$
modular forms by \eqref{E k, p defin}.  The following proposition relates $E_k^{(N)}(z)$
to $E_k(z)$.

\begin{proposition}
Let $E_k^{(N)}(z)$ be the modular form defined by \eqref{E k, p defin}.
Then, for all even $k=2m \geq 4$ and all $z\in \h$ one has
\begin{align}\label{E_k, p proposit fla}
E_{2m}^{(N)}(z)= \frac1{\sigma_m(N)} \sum_{v \mid N}v^m E_{2m}(vz),
\end{align}
where $E_k$ is defined by \eqref{E k defin}.
\end{proposition}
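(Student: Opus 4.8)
The plan is to compute $E_{2m}^{(N)}$ directly from its defining series \eqref{E k, p defin}, parametrizing the cosets $\Gamma_\infty(N)\backslash\Gamma_0(N)^+$ exactly in the spirit of the computation of the Fourier coefficients $\varphi_N(m,s)$ in Section~\ref{parabolic Eisenstein}. Since the cusp of $\Gamma_0(N)^+$ has width one, $\Gamma_\infty(N)=\langle-\Id,\ \left(\begin{smallmatrix}1&1\\0&1\end{smallmatrix}\right)\rangle$ and left multiplication by $\Gamma_\infty(N)$ leaves the bottom row of a matrix unchanged up to sign; hence the cosets biject with the bottom rows of elements of $\Gamma_0(N)^+$, taken modulo simultaneous sign change. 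Using the decomposition $\Gamma_0(N)^+=\bigcup_{v\mid N}\Gamma_0(N,v)$ and the shape $\left(\begin{smallmatrix} a\sqrt v & b/\sqrt v \\ Nc/\sqrt v & d\sqrt v\end{smallmatrix}\right)$ of elements of $\Gamma_0(N,v)$ (from \cite{JST12}), together with the fact that for square-free $N$ the equation $vad-(N/v)bc=1$ is solvable in $a,b$ precisely when $\gcd\!\big(v\delta,(N/v)\gamma\big)=1$ with $(\gamma,\delta)=(c,d)$, I would identify these bottom rows with the pairs $\big((N/\sqrt v)\gamma,\ \sqrt v\,\delta\big)$, $v\mid N$, where $\gamma,\delta\in\Z$ satisfy $\gcd(\gamma,\delta)=\gcd(v,\gamma)=\gcd(N/v,\delta)=1$.

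Substituting $cz+d=\sqrt v\,\big(\tfrac Nv\gamma z+\delta\big)$ into \eqref{E k, p defin} and writing $k=2m$ yields
\begin{align*}
E_{2m}^{(N)}(z)=\sum_{v\mid N}v^{-m}S_v(z),\qquad
S_v(z):={\sum_{(\gamma,\delta)}}^{\!\!\pm}\Big(\tfrac Nv\,\gamma z+\delta\Big)^{-2m},
\end{align*}
the inner sum taken over primitive pairs $(\gamma,\delta)\neq(0,0)$, modulo $\pm$, with $\gcd(v,\gamma)=\gcd(N/v,\delta)=1$. The key remark is that $\tfrac Nv\gamma z+\delta=\gamma\cdot\big(\tfrac Nv z\big)+\delta$, so that deleting the two coprimality conditions turns $S_v$ into the level-one series $E_{2m}(\tfrac Nv z)$ of \eqref{E k defin}. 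I would therefore restore the full Eisenstein series by Möbius inversion on the conditions $\gcd(v,\gamma)=1$ and $\gcd(N/v,\delta)=1$, using the standard reduction to primitive vectors $\sum_{(c,d)\neq(0,0)}(c\tau+d)^{-2m}=2\zeta(2m)E_{2m}(\tau)$ to dispose of the remaining primitivity constraint; after clearing the common factor $\prod_{p\mid N}(1-p^{-2m})$ this expresses each $S_v$, hence $E_{2m}^{(N)}$, as an explicit finite linear combination of the functions $E_{2m}(wz)$, $w\mid N$.

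The remaining step, which I expect to be the crux, is to collect for each divisor $w\mid N$ the coefficient of $E_{2m}(wz)$ in $\sum_{v\mid N}v^{-m}S_v$ and to verify that the resulting tangle of Möbius signs and $p^{-2m}$-factors collapses to precisely $w^{m}/\sigma_m(N)$. I would organize this as an induction on the number $r$ of prime divisors of $N$, exploiting that for square-free $N$ the structure of $\Gamma_0(N)^+$ is built multiplicatively from the Atkin--Lehner involutions $w_p$, $p\mid N$ (cf.\ \cite{AtLeh70}): in the inductive step the coset sum factors over a prime $p\|N$ against $N/p$, so the coefficients multiply and one uses that $\sigma_m$ is multiplicative. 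The base case $N=p$ is a short direct computation --- one gets $S_1=(p^{2m}E_{2m}(pz)-E_{2m}(z))/(p^{2m}-1)$ and $S_p=E_{2m}(z)-S_1$, whence $E_{2m}^{(p)}=S_1+p^{-m}S_p=(E_{2m}(z)+p^{m}E_{2m}(pz))/(1+p^{m})$, matching $\sigma_m(p)=1+p^{m}$.

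A cleaner alternative I would keep in reserve avoids the combinatorics altogether. The functions $\{E_{2m}(vz):v\mid N\}$ span the Eisenstein subspace of $M_{2m}(\Gamma_0(N))$, on which the group $\langle w_p:p\mid N\rangle\cong(\Z/2)^{r}$ acts, and the coset decomposition $\Gamma_0(N)^+=\bigcup_{v\mid N}\Gamma_0(N)w_v$ shows (via the cocycle property of the weight-$2m$ slash action) that $E_{2m}^{(N)}=\sum_{v\mid N} E_{2m}^{\Gamma_0(N)}\big|_{2m}w_v$, where $E_{2m}^{\Gamma_0(N)}$ is the weight-$2m$ Eisenstein series at $i\infty$ for $\Gamma_0(N)$; being $\Gamma_0(N)^+$-modular, $E_{2m}^{(N)}$ lies in the one-dimensional subspace fixed by every $w_p$. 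Since on the two-dimensional span of $E_{2m}(z)$ and $E_{2m}(pz)$ one computes $E_{2m}(z)\big|_{2m}w_p=p^{m}E_{2m}(pz)$ and $E_{2m}(pz)\big|_{2m}w_p=p^{-m}E_{2m}(z)$, the $(+1)$-eigenvector is $E_{2m}(z)+p^{m}E_{2m}(pz)$; tensoring over the primes dividing $N$ pins the invariant vector to a multiple of $\sum_{v\mid N}v^{m}E_{2m}(vz)$, and comparing constant terms at $i\infty$ fixes the scalar $1/\sigma_m(N)$.
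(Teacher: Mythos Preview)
Both of your approaches are correct, and both differ from the paper's argument, which is closer in spirit to your first route but avoids Möbius inversion and induction entirely. The paper parametrizes cosets exactly as you do, writing $E_k^{(N)}(z)=\sum_{v\mid N}v^{-k/2}E_k^{(N,v)}(z)$ with $E_k^{(N,v)}$ essentially your $S_v$. Rather than inverting to express each $E_k^{(N,v)}$ in terms of the $E_k(wz)$, the paper goes the other way: it decomposes each level-one series $E_k((N/v_1)z)$ by stratifying primitive pairs $(x,y)$ according to $u_1=\gcd(x,v_1)$ and $u_2=\gcd(y,N/v_1)$, obtaining $E_k((N/v_1)z)=\sum_{u_1\mid v_1,\,u_2\mid(N/v_1)}u_2^{-k}E_k^{(N,\,v_1u_2/u_1)}$. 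Multiplying by $v_1^{-k/2}$ and summing over $v_1\mid N$, the change of variables $(v_1,u_1,u_2)\mapsto(v,v_2)=(v_1u_2/u_1,\,u_1u_2)$ is a bijection onto pairs of divisors of $N$ with $v_1^{-k/2}u_2^{-k}=v^{-k/2}v_2^{-k/2}$, so the double sum factors as $\sigma_{-k/2}(N)\cdot E_k^{(N)}(z)$ in a single stroke.

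Your first approach would certainly work but is heavier: the Möbius inversion over two coprimality constraints produces a sum over pairs of divisors that then has to be reorganized, and the induction on $r$ is really just repackaging the multiplicativity of $\sigma_m$. Your second approach via Atkin--Lehner invariance is cleaner than both: it bypasses all coset bookkeeping, reduces to a two-dimensional eigenvector computation at each prime $p\mid N$, and makes the factorization $\sigma_m(N)=\prod_{p\mid N}(1+p^m)$ transparent. The paper's route has the modest advantage of being entirely self-contained from the coset definition \eqref{E k, p defin}, without needing to invoke the structure of the Eisenstein subspace for $\Gamma_0(N)$ or the explicit slash action of $w_p$ on oldforms.
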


\begin{proof}
Taking $e\in\{v : v \mid N\}$ in the definition \eqref{defn moons group}
of the moonshine group we see that
$$
E_k^{(N)}(z)
= \sum_{v \mid N} \frac12 v^{-k/2}
\sum_{(c,d)\in\Z^2, (c,v)=1, ((N/v)c,d)=1} \left(\frac{N}{v}cz+d \right)^{-k}
= \sum_{v \mid N} v^{-k/2} E_k^{(N,v)}(z),
$$
where
$$
E_k^{(N,v)}(z)=\frac12\sum_{(c,d)\in\Z^2, ((N/v)c,vd)=1}
\left(\frac{N}{v}cz+d \right)^{-k}.
$$

Now, we use the fact that $N$ is square-free to deduce that for a fixed $v_1 \mid N$ we have
\begin{multline*}
\{(x,y)\in\Z^2 : (x,y)=1\} = \biguplus_{u_1 \mid v_1,\, u_2 \mid (N/v_1)} \{(x,y)\in\Z^2 :
(x,y)=1,\ (x,v_1)=u_1,\ (y,(N/v_1))=u_2\} \\
= \biguplus_{u_1 \mid v_1,\, u_2 \mid (N/v_1)} \{(x,y)\in\Z^2 :
x=u_1x_1,\ y=u_2y_1,\ (\frac{N}{v_1 u_2}u_1 x_1, \frac{v_1}{u_1} u_2 y_1)=1,
\ (x_1,y_1)\in\Z^2 \}.
\end{multline*}
Therefore, for a fixed $v_1 \mid N$ we have that
\begin{multline*}
E_k((N/v_1)z) = \frac12
\sum_{(c,d)\in\Z^2, (c,d)=1} \left(\frac{N}{v_1}cz+d \right)^{-k}
= \frac12 \sum_{u_1 \mid v_1,\, u_2 \mid (N/v_1)}
\sum_{(\frac{N}{v_1 u_2}u_1 c_1, \frac{v_1}{u_1} u_2 d_1)=1}
\left(\frac{N}{v_1}u_1c_1z+u_2d_1 \right)^{-k} \\
= \frac12 \sum_{u_1 \mid v_1,\, u_2 \mid (N/v_1)} u_2^{-k}
\sum_{(\frac{N}{v_1 u_2}u_1 c_1, \frac{v_1}{u_1} u_2 d_1)=1}
\left(\frac{N}{v_1 u_2}u_1c_1z+d_1 \right)^{-k}
= \sum_{u_1 \mid v_1,\, u_2 \mid (N/v_1)} u_2^{-k}
E_k^{(N, \frac{v_1 u_2}{u_1})}(z).
\end{multline*}
Multiplying the above equation by $v_1^{-k/2}$ and taking the sum over
all $v_1\mid N$ we get
$$
\sum_{v_1 \mid N} v_1^{-k/2} E_k((N/v_1)z)
= \sum_{v_1 \mid N} v_1^{-k/2} \sum_{u_1 \mid v_1,\, u_2 \mid (N/v_1)} u_2^{-k}
E_k^{(N, \frac{v_1 u_2}{u_1})}(z).
$$
Once we fix $v_1 \mid N$ and put $v=\frac{v_1 u_2}{u_1}$, it is easy to
see that $v$ ranges over all divisors of $N$, as $u_1$ runs through divisors of $v_1$
and $u_2$ runs through divisors of $N/v_1$.

Furthermore, for fixed $v_1, v \mid N$ there exists a unique pair
$(u_1, u_2)$ of positive integers such that
$u_1 \mid v_1,\, u_2 \mid (N/v_1)$ and $v=\frac{v_1 u_2}{u_1}$.
Reasoning in this way, we see that
$$
\sum_{v_1 \mid N} v_1^{-k/2} E_k((N/v_1)z)
= \sum_{v \mid N} v^{-k/2} E_k^{(N, v)}(z) \cdot \sum_{v_2 \mid N} v_2^{-k/2}
= \sigma_{-k/2}(N) E_k^{(N)}(z).
$$
By the definition of the function $\sigma_m(N)$ the above equation is
equivalent to \eqref{E_k, p proposit fla}, which completes the proof.
\end{proof}

\section{Searching for generators of function fields}\label{Generators of function fields}

With the above analysis, we are now able to define holomorphic modular functions on
the space $X_N= \overline{\Gamma_0(N)^+}\backslash\h$.

\subsection{$q$-expansions}

According to Theorem~\ref{Delta_N}, the form $\Delta_N$ has weight $k_N$, and its
$q$-expansion is easily obtained from inserting the product formula of the classical
$\Delta$ function
$$
\eta^{24}(z) = q\prod_{n=1}^\infty(1-q^n)^{24}, \quad q=e(z),\ z\in\h,
$$
in
$$
\Delta_N(z) = \left(\prod_{v \mid N} \eta(v z)\right)^{\ell_N}.
$$
This allows us to get the $q$-expansion for any power of $\Delta_N$, including negative powers.

Similarly, the $q$-expansion of the holomorphic Eisenstein series $E_k^{(N)}$ is obtained
from the $q$-expansion of the classical Eisenstein series $E_k$.  Using the notation of
\cite{Zag08}, we write
$$
E_k(z)=1-\frac{2k}{B_k}\sum_{\nu=1}^\infty \sigma_{k-1}(\nu)q^\nu,
$$
where $B_k$ stands for the $k$-th Bernoulli number (e.g.\ $-8/B_4=240$; $-12/B_6= -504$, etc.).
From \eqref{E_k, p proposit fla} we deduce
$$
E_k^{(N)}(z)=\frac1{\sigma_{k/2}(N)}\sum_{v \mid N}v^{k/2}\left(1-\frac{2k}{B_k}
\sum_{\nu=1}^\infty \sigma_{k-1}(\nu) q^{v\nu} \right); \quad q=e(z),\ z\in\h.
$$

\subsection{Subspaces of rational functions}

Now it is evident that for any positive integer $M$, the function\\[2ex]
\eqref{rational_function_b} \hspace*{\fill} $\displaystyle
F_b(z) = \prod_\nu\left(E_{m_\nu}^{(N)}(z)\right)^{b_\nu}\Big/\big(\Delta_N(z)\big)^M
\quad \text{where} \quad
\sum_\nu b_\nu m_\nu = Mk_N
\quad \text{and} \quad
b = (b_1, \ldots) $ \hspace*{\fill} \\[2ex]
is a holomorphic modular function on $\overline{\Gamma_0(N)^+}\backslash\h$, meaning a weight
zero modular form with polynomial growth near $i\infty$.  Its $q$-expansion follows
from substituting $E_k^{(N)}$ and $\Delta_N$ by their $q$-expansions.

Let $\s_M$ denote the set of all possible rational functions defined in
\eqref{rational_function_b} for all possible vectors $b=(b_{\nu})$ and $m=(m_{\nu})$ with fixed $M$.
Our rationale for finding a set of generators for the function field of the smooth,
compact algebraic curve associated to $X_N$ is based on the
assumption that a finite span of $\s = \cup_{M=0}^K \s_M$ contains the set of generators
for the function field.  Subject to this assumption, the set of generators follows
from a base change.  The base change is performed as follows.

\subsection{Our algorithm}\label{Algorithm}

Choose a non-negative integer $\kappa$.  Let $M=1$ and set $\s = \s_1 \cup \s_0$.
\begin{enumerate}
\item Form the matrix $A_{\s}$ of coefficients from the $q$-expansions of all elements of $\s$,
where each element in $\s$ is expanded along a row with each column containing the coefficient
of a power, negative, zero or positive, of $q$.  The expansion is recorded out to order
$q^{\kappa}$.
\item Apply Gauss elimination to $A_{\s}$, thus producing a matrix $B_{\s}$ which is in row-reduced
echelon form.
\item Implement the following decision to determine if the algorithm is complete:
If the $g$ lowest non-trivial rows at the bottom of $B_{\s}$ correspond to $q$-expansions whose
lead terms have precisely $g$ gaps in the set $\{q^{-1}, \ldots, q^{-2g}\}$, then the algorithm is
completed.  If the indicator to stop fails, then replace $M$ by $M+1$, $\s$ by $\s_M \cup \s$
and reiterate the algorithm.
\end{enumerate}

We described the algorithm with choice of an arbitrary $\kappa$.  For reasons of efficiency, we
initially selected $\kappa$ to be zero, so that all coefficients for $q^\nu$
for $\nu \leq \kappa$ are included in $A_{\s}$, but finally increased $\kappa$ to its desired value
according to Remark~\ref{rem:kappa algorithm}.

The rationale for the stopping decision in Step 3 above is based on two ideas, one factual
and one hopeful.  First, the Weierstrass gap theorem states that for any point $P$ on a
compact Riemann surface there are precisely $g$ gaps in the set of possible orders from $1$
to $2g$ of functions whose only pole is at $P$.  For the main considerations of this paper,
we study $g\leq3$.  For instance, when $g=1$ there occurs exactly one gap which for topological
reasons is always $\{q^{-1}\}$.  Second, for any genus, the assumption which is hopeful is
that the function field is generated by the set of holomorphic modular functions defined in
\eqref{rational_function_b}, which is related to the question of whether the ring of holomorphic
modular forms on $\Gamma_0(N)^+$ is generated by holomorphic Eisenstein series and the
Kronecker limit function.  The latter assumption is not obvious, but it has turned out to be true
for all moonshine groups $\Gamma_0(N)^+$ that we have studied so far.  This includes
in particular all moonshine groups of genus zero, genus one, genus two, and genus three.

\subsection{Implementation notes}

We have implemented C code that generates in integer arithmetic the set
$\s=\cup_{M=0}^k \s_M$ of all possible rational functions defined in
\eqref{rational_function_b} for any given positive integer $K$ and
any square-free level $N$.  The set $\s$ is passed in symbolic notation to
PARI/GP \cite{PARI11} which computes in rational arithmetic the $q$-expansions
of all elements of $\s$ up to any given positive order $\kappa$ and forms the
matrix $A_{\s}$.  Theoretically, we could have used PARI/GP for the Gauss
elimination of $A_{\s}$.  However, for reasons of efficiency, we have written
our own C code, linked against the GMP MP library \cite{Gra12}, to compute the
Gauss elimination in rational arithmetic.  The result is a matrix $B_{\s}$ whose
rows correspond to $q$-expansions with rational coefficients.  Inspecting the
rational coefficients of $B_{\s}$, we find that the denominators are $1$,
i.e.\ after Gauss elimination the coefficients turn out to be integers.
All computations are in rational arithmetic, hence are exact.

After the fact, we select $\kappa$ as in Tables~\ref{tab:kappa genus zero} and
\ref{tab:kappa genus one} in case when genus is zero or one, or compute $\kappa$ as explained
in Remark~\ref{rem:kappa algorithm} when genus is bigger than one, and extend the $q$-expansions
of the field generators out to order $q^\kappa$.  Inspecting the coefficients we find that they
are integer which proves that the field generators have integer $q$-expansions.

It is important that we compute the Gauss elimination with a small value of $\kappa$
first, and extend the $q$-expansions for the field generators out to the
larger number $\kappa$ according to Remark~\ref{rem:kappa algorithm}, later.
There are two reasons for this: (i) It would be a waste of resources to
start with a large value of $\kappa$.  Once the Gauss elimination is done
with a small value of $\kappa$, it is easy to extend the $q$-expansions
out to a larger value of $\kappa$ using reverse engineering of the Gauss
elimination.  (ii) If the genus is larger than one, we do not know the gap
sequence for the field generators in advance.  We can determine $a_1$ and $a_2$, and hence
$\kappa$ according to Remark~\ref{rem:kappa algorithm} only if we have completed the Gauss
elimination with some other positive value of $\kappa$ before.

\section{Genus zero}\label{genus zero}

The genus zero case, due it simplicity is suitable for demonstration of the algorithm.  We do
that at the example of holomorphic modular functions on $X_N$ for the level $N=17$.  After four
iterations, the set $\s$ consists of $15$ functions.  With the above notation, the functions
are as follows, with their associated $q$-expansions:
$$
\begin{array}{rcl}
F_{(0)}(z) & = & \ \ 1 \\
F_{(4)}(z) & = & \ q^{-3} + 140/29 \cdot q^{-2} + 718/29 \cdot q^{-1} + \ldots \\
F_{(4,4)}(z) & = & \ q^{-6} + 280/29 \cdot q^{-5} + 61244/841 \cdot q^{-4} + \ldots \\
F_{(8)}(z) & = & \ q^{-6} + 334328/41761 \cdot q^{-5} + 1870364/41761 \cdot q^{-4} + \ldots \\
F_{(4,4,4)}(z) & = & \ q^{-9} + 420/29 \cdot q^{-8} + 121266/841 \cdot q^{-7} + \ldots \\
F_{(6,6)}(z) & = & \ q^{-9} + 460/39 \cdot q^{-8} + 122866/1521 \cdot q^{-7} + \ldots \\
F_{(8,4)}(z) & = & \ q^{-9} + 15542052/1211069 \cdot q^{-8} + 4518306/41761 \cdot q^{-7} + \ldots \\
F_{(12)}(z) & = & \ q^{-9} + 20014879596/1667906087 \cdot q^{-8} + 150125051502/1667906087 \cdot q^{-7} + \ldots \\
F_{(4,4,4,4)}(z) & = & q^{-12} + 560/29 \cdot q^{-11} + 200888/841 \cdot q^{-10} + \ldots \\
F_{(6,6,4)}(z) & = & q^{-12} + 18800/1131 \cdot q^{-11} + 7166792/44109 \cdot q^{-10} + \ldots \\
F_{(8,4,4)}(z) & = & q^{-12} + 21388592/1211069 \cdot q^{-11} + 6845330168/35121001 \cdot q^{-10} + \ldots \\
F_{(8,8)}(z) & = & q^{-12} + 668656/41761 \cdot q^{-11} + 267991753592/1743981121 \cdot q^{-10} + \ldots \\
F_{(10,6)}(z) & = & q^{-12} + 4445968/279669 \cdot q^{-11} + 123230008/839007 \cdot q^{-10} + \ldots \\
F_{(12,4)}(z) & = & q^{-12} + 28066840016/1667906087 \cdot q^{-11} + 8353266207464/48369276523 \cdot q^{-10} + \ldots \\
F_{(16)}(z) & = & q^{-12} + 201850517349872/12615657333857 \cdot q^{-11} + 1917580182271864/12615657333857 \cdot q^{-10} + \ldots \\
\end{array}
$$
After Gauss elimination, we derive a set of functions with the following $q$-expansions:
$$
\begin{array}{rcrcl}
  f_{1}(z) & = & q^{-12} & + & 23952 q + 1146120 q^{2} + 24348512 q^{3} + 333173679 q^{4} + 3413935488 q^{5} + 28384768032 q^{6} + \ldots \\
  f_{2}(z) & = & q^{-11} & + & 15037 q + 592636 q^{2} + 10928808 q^{3} + 132565620 q^{4} + 1221797082 q^{5} + 9229730400 q^{6} + \ldots \\
  f_{3}(z) & = & q^{-10} & + & 9040 q + 297580 q^{2} + 4718080 q^{3} + 50477640 q^{4} + 416335968 q^{5} + 2844975860 q^{6} + \ldots \\
  f_{4}(z) & = & q^{-9} & + & 5427 q + 143478 q^{2} + 1949433 q^{3} + 18261234 q^{4} + 134057376 q^{5} + 824326644 q^{6} + \ldots \\
  f_{5}(z) & = & q^{-8} & + & 3088 q + 66520 q^{2} + 763488 q^{3} + 6223924 q^{4} + 40378496 q^{5} + 222115728 q^{6} + \ldots \\
  f_{6}(z) & = & q^{-7} & + & 1722 q + 29120 q^{2} + 281288 q^{3} + 1972936 q^{4} + 11231038 q^{5} + 54855304 q^{6} + \ldots \\
  f_{7}(z) & = & q^{-6} & + & 888 q + 12063 q^{2} + 95680 q^{3} + 573060 q^{4} + 2830848 q^{5} + 12176072 q^{6} + \ldots \\
  f_{8}(z) & = & q^{-5} & + & 460 q + 4520 q^{2} + 29620 q^{3} + 148560 q^{4} + 630492 q^{5} + 2359040 q^{6} + \ldots \\
  f_{9}(z) & = & q^{-4} & + & 200 q + 1572 q^{2} + 7984 q^{3} + 33267 q^{4} + 118848 q^{5} + 382040 q^{6} + \ldots \\
  f_{10}(z) & = & q^{-3} & + & 87 q + 444 q^{2} + 1816 q^{3} + 5988 q^{4} + 17772 q^{5} + 47840 q^{6} + \ldots \\
  f_{11}(z) & = & q^{-2} & + & 28 q + 107 q^{2} + 296 q^{3} + 786 q^{4} + 1808 q^{5} + 4021 q^{6} + \ldots \\
  f_{12}(z) & = & q^{-1} & + & 7 q + 14 q^{2} + 29 q^{3} + 50 q^{4} + 92 q^{5} + 148 q^{6} + 246 q^{7} + 386 q^{8} + 603 q^{9} + 904 q^{10} + 1367 q^{11} + \ldots \\
  f_{13}(z) & = & & 1 \\
  f_{14}(z) & = & & 0 & + \quad O(q^{\kappa+1}) \\
  f_{15}(z) & = & & 0 & + \quad O(q^{\kappa+1}) \\
\end{array}
$$
Every function $f_k$ with $k=1,\ldots,11$ can be written as a polynomial in $f_{12}$.  For
example, $f_{11} = f_{12}^2 - 14$.  Further relations are easily derived.

As stated, the $j$-invariant, also called the Hauptmodul, is the unique holomorphic modular
function with simple pole at $z=i\infty$.  For $\Gamma_0(17)^+$, the Hauptmodul is given by
$j_{17}=f_{12}(z)$.  It is remarkable that the $q$-expansion for $j_{17}$ has integer
coefficients, especially when considering how we obtained $j_{17}$.

In the case when the surface $X_N$ has genus equal to zero, $q$-expansions of Hauptmoduls $j_N$
coincide with McKay-Thompson series for certain classes of the monster $\M$.  Therefore, we were
able to compare results obtained using the algorithm explained above with well known expansions
that may be found e.g.\ at \cite{Sloane2010}.  All expansions were computed out to order
$q^{\kappa}$ and they match with the corresponding expansions of McKay-Thompson series
available at \cite{Sloane2010}.

The successful execution of our algorithm in the genus zero case, besides direct check of the
correctness of the algorithm provides two additional corollaries.  First, the $j$-invariant is
expressed as a linear combination of elements from the set $\s_M$, for some $M$, thus showing
that the Hauptmoduli can be written as a rational function in holomorphic Eisenstein series and
the Kronecker limit function, thus generalizing \eqref{j_via_Eisenstein}.  Second, since the
function field associated to the underlying algebraic curve is generated by the $j$-invariant,
we conclude that all holomorphic modular forms on $X_N$ are generated by a finite set of
holomorphic Eisenstein series and a power of the Kronecker limit function, again generalizing
a classical result for $\PSL(2,\Z)$.  A complete analysis of these two results for all genus
zero moonshine groups will be given in \cite{JST13a}.

\section{Genus one}\label{genus one}

The smallest $N$ such that $\Gamma_0(N)^+$ has genus one is $N=37$.  In that example, the
Kronecker limit function is $\Delta_{37}(z)=\sqrt{\Delta(z)\Delta(37z)}$, which has weight $12$.
After four iterations, the algorithm completes successfully.  However, $\s_4$ consists of
$434$ functions, and this set contains functions whose pole at $i\infty$ has order as large as
$76$.  For instance, the function $F_{(48)}(z)=E^{(37)}_{48}(z)/\Delta_{37}^4(z)$ is in $\s_4$,
and the first few terms in the $q$-expansion of $F_{(48)}$ are
$$
F_{(48)}(z) = q^{-76} + a_{-75}/d \cdot q^{-75} + a_{-74}/d \cdot q^{-74} + a_{-73}/d \cdot q^{-73}+ a_{-72}/d \cdot q^{-72} + a_{-71}/d \cdot q^{-71} + a_{-70}/d \cdot q^{-70} + \ldots
$$
where $a_{-75},\ldots,a_{-70}$, and $d$ are the following integers:
\begin{align*}
a_{-75}&=343178734921291171422482658701977036706530252314069069522767928048;\\
a_{-74}&=8751057740492924871273307796900414436016521434027203513304667425784;\\
a_{-73}&=156947074770670495730548735913037498120456652924086413129066082281152;\\
a_{-72}&=2215476117968125479910692423915288257313864521881655533695941651024796;\\
a_{-71}&=26148160528592859515364643697138531568926425697820573801334173510055392;\\
a_{-70}&=267886235622853011855513756670347976534161565268720843078120874364187648;
\end{align*}
and
$$
d= 7149556977526899404635055389624521598052713589876438948390995771.
$$

In the setting of genus one, there is no holomorphic modular function with a simple pole at
$i\infty$.  However, there does exist a holomorphic modular function with an order two pole
at $i\infty$, which is equal to the Weierstrass $\wp$-function.  We let $j_{1;N}$ denote the
$\wp$-function for $\Gamma_0(N)^+$.  Also, let $j_{2;N}$ denote the holomorphic modular
function with a third order pole at $i\infty$.  In the coordinates of the complex plane,
$j_{2;N}$ is the derivative of $j_{1;N}$.  However, in the coordinate of the upper half plane,
either $z$ or $q$, one will not have that $j_{2;N}$ is the derivative of $j_{1;N}$, as can be
seen by the chain rule from one variable calculus.  In all cases, the functions $j_{1;N}$ and
$j_{2;N}$ generate the function field of the underlying elliptic curve.

Our algorithm yields the $q$-expansions of the functions $j_{1;N}$ and $j_{2;N}$.  In the case
$N=37$, we obtain the expansions
$$
j_{2;37}(z) = q^{-3} + 3q^{-1} + 19q + 38q^{2} + 93q^{3} + 176q^{4} + 347q^{5} + 630q^{6} + 1139q^{7} + 1944q^{8} + 3305q^{9} + 5404q^{10} + \ldots
$$
and
$$
j_{1;37}(z) = q^{-2} + 2q^{-1} + 9q + 18q^{2} + 29q^{3} + 51q^{4} + 82q^{5} + 131q^{6} + 199q^{7} + 306q^{8} + 450q^{9} + 666q^{10} + 957q^{11} + \ldots
$$

As we have stated, the coefficients in the $q$-expansions of $j_{1;37}$ and $j_{2;37}$ are
integers.  Again, noting the size of the denominators of the coefficients in the $q$-expansions
of elements of $\s_4$, it is striking that the generators of the function fields would have
integer coefficients.

\begin{sidewaystable}
\caption{\label{tab:y & x genus one}$q$-expansion of $y = j_{2;N}(z)$, and $q$-expansion
of $x = j_{1;N}(z)$ for the genus one moonshine groups $\Gamma_0(N)^+$.}
$$
\begin{array}{ccll}
N  &&\hspace*{.18\textheight} y  &\hspace*{.18\textheight}  x \\[1ex]
37  &&  q^{-3} + 3q^{-1} + 19q + 38q^{2} + 93q^{3} + 176q^{4} + 347q^{5} + 630q^{6} + 1139q^7 + \ldots  &  q^{-2} + 2q^{-1} + 9q + 18q^{2} + 29q^{3} + 51q^{4} + 82q^{5} + 131q^{6} + 199q^{7} + \ldots \\
43  &&  q^{-3} + 2q^{-1} + 13q + 24q^{2} + 55q^{3} + 98q^{4} + 186q^{5} + 318q^{6} + 549q^{7} + \ldots  &  q^{-2} + 2q^{-1} + 7q + 13q^{2} + 20q^{3} + 33q^{4} + 50q^{5} + 77q^{6} + 112q^{7} + \ldots \\
53  &&  q^{-3} + 3q^{-1} + 10q + 16q^{2} + 33q^{3} + 50q^{4} + 90q^{5} + 140q^{6} + 227q^{7} + \ldots  &  q^{-2} + q^{-1} + 4q + 7q^{2} + 10q^{3} + 17q^{4} + 23q^{5} + 35q^{6} + 48q^{7} + \ldots \\
57  &&  q^{-3} + q^{-1} + 6q + 10q^{2} + 27q^{3} + 36q^{4} + 61q^{5} + 106q^{6} + 156q^{7} + \ldots  &  q^{-2} + 2q^{-1} + 5q + 8q^{2} + 9q^{3} + 15q^{4} + 23q^{5} + 30q^{6} + 43q^{7} + \ldots \\
58  &&  q^{-3} + 3q^{-1} + 10q + 10q^{2} + 30q^{3} + 34q^{4} + 72q^{5} + 90q^{6} + 172q^{7} + \ldots  &  q^{-2} + q^{-1} + 3q + 7q^{2} + 7q^{3} + 14q^{4} + 17q^{5} + 29q^{6} + 32q^{7} + \ldots \\
61  &&  q^{-3} + 2q^{-1} + 7q + 10q^{2} + 22q^{3} + 32q^{4} + 53q^{5} + 80q^{6} + 127q^{7} + \ldots  &  q^{-2} + q^{-1} + 3q + 6q^{2} + 7q^{3} + 11q^{4} + 16q^{5} + 23q^{6} + 30q^{7} + \ldots \\
65  &&  q^{-3} + 2q^{-1} + 7q + 10q^{2} + 17q^{3} + 24q^{4} + 43q^{5} + 66q^{6} + 102q^{7} + \ldots  &  q^{-2} + q^{-1} + 3q + 4q^{2} + 7q^{3} + 11q^{4} + 13q^{5} + 19q^{6} + 23q^{7} + \ldots \\
74  &&  q^{-3} + 3q^{-1} + 7q + 6q^{2} + 17q^{3} + 16q^{4} + 35q^{5} + 38q^{6} + 71q^{7} + \ldots  &  q^{-2} \quad + \ \ \quad q + 4q^{2} + 3q^{3} + 7q^{4} + 6q^{5} + 13q^{6} + 13q^{7} + 22q^{8} + \ldots \\
77  &&  q^{-3} + 3q^{-1} + 7q + 6q^{2} + 14q^{3} + 16q^{4} + 28q^{5} + 38q^{6} + 56q^{7} + \ldots  &  q^{-2} \quad + \ \ \quad q + 3q^{2} + 3q^{3} + 7q^{4} + 7q^{5} + 10q^{6} + 13q^{7} + 16q^{8} + \ldots \\
79  &&  q^{-3} + q^{-1} + 4q + 5q^{2} + 11q^{3} + 14q^{4} + 23q^{5} + 32q^{6} + 47q^{7} + \ldots  &  q^{-2} + q^{-1} + 2q + 4q^{2} + 4q^{3} + 6q^{4} + 8q^{5} + 11q^{6} + 14q^{7} + 19q^{8} + \ldots \\
82  &&  q^{-3} + 2q^{-1} + 5q + 4q^{2} + 13q^{3} + 12q^{4} + 24q^{5} + 26q^{6} + 49q^{7} + \ldots  &  q^{-2} + q^{-1} + 2q + 4q^{2} + 3q^{3} + 6q^{4} + 7q^{5} + 11q^{6} + 11q^{7} + 18q^{8} + \ldots \\
83  &&  q^{-3} + q^{-1} + 4q + 5q^{2} + 9q^{3} + 12q^{4} + 20q^{5} + 27q^{6} + 40q^{7} + \ldots  &  q^{-2} + q^{-1} + 2q + 3q^{2} + 4q^{3} + 6q^{4} + 7q^{5} + 10q^{6} + 12q^{7} + 16q^{8} + \ldots \\
86  &&  q^{-3} + 2q^{-1} + 5q + 4q^{2} + 11q^{3} + 10q^{4} + 22q^{5} + 22q^{6} + 41q^{7} + \ldots  &  q^{-2} \quad + \ \ \quad q + 3q^{2} + 2q^{3} + 5q^{4} + 4q^{5} + 9q^{6} + 8q^{7} + 14q^{8} + \ldots \\
89  &&  q^{-3} + q^{-1} + 3q + 4q^{2} + 9q^{3} + 10q^{4} + 16q^{5} + 22q^{6} + 32q^{7} + \ldots  &  q^{-2} + q^{-1} + 2q + 3q^{2} + 3q^{3} + 5q^{4} + 6q^{5} + 8q^{6} + 10q^{7} + 13q^{8} + \ldots \\
91  &&  q^{-3} \quad + \ \ \quad 2q + 4q^{2} + 6q^{3} + 10q^{4} + 15q^{5} + 18q^{6} + 28q^{7} + \ldots  &  q^{-2} + 2q^{-1} + 3q + 3q^{2} + 4q^{3} + 5q^{4} + 7q^{5} + 10q^{6} + 11q^{7} + \ldots \\
101  &&  q^{-3} + 2q^{-1} + 3q + 4q^{2} + 7q^{3} + 8q^{4} + 13q^{5} + 16q^{6} + 23q^{7} + \ldots  &  q^{-2} \quad + \ \ \quad q + 2q^{2} + 2q^{3} + 3q^{4} + 3q^{5} + 5q^{6} + 6q^{7} + 8q^{8} + \ldots \\
102  &&  q^{-3} + q^{-1} + 3q + 2q^{2} + 8q^{3} + 6q^{4} + 12q^{5} + 14q^{6} + 22q^{7} + \ldots  &  q^{-2} + q^{-1} + q + 3q^{2} + 2q^{3} + 4q^{4} + 5q^{5} + 6q^{6} + 6q^{7} + 10q^{8} + \ldots \\
111  &&  q^{-3} \quad + \ \ \quad q + 2q^{2} + 6q^{3} + 5q^{4} + 8q^{5} + 12q^{6} + 14q^{7} + 18q^{8} + \ldots  &  q^{-2} + 2q^{-1} + 3q + 3q^{2} + 2q^{3} + 3q^{4} + 4q^{5} + 5q^{6} + 7q^{7} + 9q^{8} + \ldots \\
114  &&  q^{-3} + q^{-1} + 2q + 2q^{2} + 7q^{3} + 4q^{4} + 9q^{5} + 10q^{6} + 16q^{7} + 16q^{8} + \ldots  &  q^{-2} \quad + \ \ \quad q + 2q^{2} + q^{3} + 3q^{4} + q^{5} + 4q^{6} + 5q^{7} + 6q^{8} + \ldots \\
118  &&  q^{-3} + q^{-1} + 3q + 2q^{2} + 5q^{3} + 4q^{4} + 9q^{5} + 8q^{6} + 15q^{7} + 14q^{8} + \ldots  &  q^{-2} + q^{-1} + q + 2q^{2} + 2q^{3} + 3q^{4} + 3q^{5} + 5q^{6} + 4q^{7} + 7q^{8} + \ldots \\
123  &&  q^{-3} + q^{-1} + 2q + 2q^{2} + 5q^{3} + 4q^{4} + 7q^{5} + 10q^{6} + 11q^{7} + 14q^{8} + \ldots  &  q^{-2} \quad + \ \ \quad q + q^{2} + q^{3} + 3q^{4} + 2q^{5} + 3q^{6} + 4q^{7} + 4q^{8} + \ldots \\
130  &&  q^{-3} + 2q^{-1} + 3q + 2q^{2} + 5q^{3} + 4q^{4} + 7q^{5} + 6q^{6} + 14q^{7} + 10q^{8} + \ldots  &  q^{-2} - q^{-1} - q + 2q^{2} + q^{3} + q^{4} + q^{5} + 3q^{6} + q^{7} + 5q^{8} + 2q^{9} + \ldots \\
131  &&  q^{-3} + q^{-1} + 2q + 2q^{2} + 4q^{3} + 4q^{4} + 6q^{5} + 7q^{6} + 10q^{7} + 12q^{8} + \ldots  &  q^{-2} \quad + \ \ \quad q + q^{2} + q^{3} + 2q^{4} + 2q^{5} + 3q^{6} + 3q^{7} + 4q^{8} + \ldots \\
138  &&  q^{-3} + q^{-1} + 2q + q^{2} + 5q^{3} + 3q^{4} + 5q^{5} + 6q^{6} + 10q^{7} + 8q^{8} + \ldots  &  q^{-2} + q^{-1} + q + 2q^{2} + q^{3} + 2q^{4} + 3q^{5} + 3q^{6} + 2q^{7} + 5q^{8} + \ldots \\
141  &&  q^{-3} + q^{-1} + q + 2q^{2} + 4q^{3} + 3q^{4} + 5q^{5} + 7q^{6} + 8q^{7} + 9q^{8} + \ldots  &  q^{-2} \quad + \ \ \quad q + q^{2} + q^{3} + 2q^{4} + q^{5} + 2q^{6} + 3q^{7} + 3q^{8} + \ldots \\
142  &&  q^{-3} + q^{-1} + 2q + q^{2} + 4q^{3} + 3q^{4} + 6q^{5} + 5q^{6} + 9q^{7} + 8q^{8} + \ldots  &  q^{-2} + q^{-1} + q + 2q^{2} + q^{3} + 2q^{4} + 2q^{5} + 3q^{6} + 3q^{7} + 4q^{8} + \ldots \\
143  &&  q^{-3} + q^{-1} + q + 2q^{2} + 3q^{3} + 3q^{4} + 6q^{5} + 7q^{6} + 8q^{7} + 9q^{8} + \ldots  &  q^{-2} \quad + \ \ \quad q + q^{2} + q^{3} + 2q^{4} + q^{5} + 2q^{6} + 2q^{7} + 3q^{8} + \ldots \\
145  &&  q^{-3} \quad + \ \ \quad q + q^{2} + 4q^{3} + 2q^{4} + 4q^{5} + 6q^{6} + 7q^{7} + 9q^{8} + \ldots  &  q^{-2} + q^{-1} + q + 2q^{2} + q^{3} + 2q^{4} + 2q^{5} + 2q^{6} + 3q^{7} + 4q^{8} + \ldots \\
155  &&  q^{-3} + q^{-1} + q + 2q^{2} + 3q^{3} + 3q^{4} + 4q^{5} + 4q^{6} + 7q^{7} + 7q^{8} + \ldots  &  q^{-2} \quad + \ \ \quad q + q^{2} + q^{3} + q^{4} + q^{5} + 2q^{6} + 2q^{7} + 3q^{8} + 2q^{9} + \ldots \\
159  &&  q^{-3} \quad + \ \ \quad q + q^{2} + 3q^{3} + 2q^{4} + 3q^{5} + 5q^{6} + 5q^{7} + 6q^{8} + \ldots  &  q^{-2} + q^{-1} + q + q^{2} + q^{3} + 2q^{4} + 2q^{5} + 2q^{6} + 3q^{7} + 3q^{8} + \ldots \\
174  &&  q^{-3} \quad + \ \ \quad q + q^{2} + 3q^{3} + q^{4} + 3q^{5} + 3q^{6} + 4q^{7} + 4q^{8} + \ldots  &  q^{-2} + q^{-1} + q^{2} + q^{3} + 2q^{4} + 2q^{5} + 2q^{6} + 2q^{7} + 3q^{8} + 2q^{9} + \ldots \\
182  &&  q^{-3} \quad + \ \ \quad 2q + 2q^{3} + 2q^{4} + 3q^{5} + 2q^{6} + 4q^{7} + 4q^{8} + 7q^{9} + \ldots  &  q^{-2} \quad + \ \ \quad q + q^{2} + q^{4} + q^{5} + 2q^{6} + q^{7} + 2q^{8} + q^{9} + 3q^{10} + \ldots \\
190  &&  q^{-3} + q^{-1} + 2q + q^{2} + 2q^{3} + q^{4} + 3q^{5} + 3q^{6} + 5q^{7} + 3q^{8} + \ldots  &  q^{-2} + q^{-1} + q + q^{2} + q^{3} + q^{4} + q^{5} + 2q^{6} + q^{7} + 3q^{8} + 2q^{9} + \ldots \\
195  &&  q^{-3} - q^{-1} + q + q^{2} + 2q^{3} + q^{5} + 3q^{6} + 3q^{7} + 3q^{8} + 5q^{9} + \ldots  &  q^{-2} + q^{-1} + q^{2} + q^{3} + 2q^{4} + q^{5} + q^{6} + 2q^{7} + 2q^{8} + 2q^{9} + \ldots \\
210  &&  q^{-3} + q^{-1} + q + 3q^{3} + 2q^{4} + 2q^{5} + 2q^{6} + 3q^{7} + 2q^{8} + 6q^{9} + \ldots  &  q^{-2} + q^{-1} + q + 2q^{2} + q^{4} + q^{5} + q^{6} + q^{7} + q^{8} + 2q^{9} + 2q^{10} + \ldots \\
222  &&  q^{-3} \quad + \ \ \quad q + 2q^{3} + q^{4} + 2q^{5} + 2q^{6} + 2q^{7} + 2q^{8} + 5q^{9} + \ldots  &  q^{-2} \quad + \ \ \quad q + q^{2} + q^{4} + q^{6} + q^{7} + q^{8} + q^{9} + 2q^{10} + q^{11} + \ldots \\
231  &&  q^{-3} \quad + \ \ \quad q + 2q^{3} + q^{4} + q^{5} + 2q^{6} + 2q^{7} + 3q^{8} + 4q^{9} + \ldots  &  q^{-2} \quad + \ \ \quad q + q^{4} + q^{5} + q^{6} + q^{7} + q^{8} + q^{9} + 2q^{10} + q^{11} + \ldots \\
238  &&  q^{-3} \quad + \ \ \quad q + q^{2} + q^{3} + q^{4} + 2q^{5} + q^{6} + 2q^{7} + q^{8} + 4q^{9} + \ldots  &  q^{-2} + q^{-1} + q^{3} + q^{4} + q^{5} + 2q^{6} + q^{7} + 2q^{8} + q^{9} + 2q^{10} + \ldots \\
\end{array}
$$
\end{sidewaystable}

\begin{table}
\caption{\label{pol:genus one}Cubic relation satisfied
by $x=j_{1;N}$ and $y=j_{2;N}$ for the genus one moonshine groups $\Gamma_0(N)^+$.}
$$
\begin{array}{rc}
N & \text{cubic relation} \\[1ex]
37 & y^2 - x^3 + 6xy - 6x^2 + 41y + 49x + 300 = 0 \\
43 & y^2 - x^3 + 6xy - 4x^2 + 33y + 43x + 192 = 0 \\
53 & y^2 - x^3 + 3xy - 6x^2 + 16y + x + 42 = 0 \\
57 & y^2 - x^3 + 6xy - 2x^2 + 25y + 37x + 114 = 0 \\
58 & y^2 - x^3 + 3xy - 6x^2 + 13y - 2x + 24 = 0 \\
61 & y^2 - x^3 + 3xy - 4x^2 + 12y + 7x + 30 = 0 \\
65 & y^2 - x^3 + 3xy - 4x^2 + 12y + x + 18 = 0 \\
74 & y^2 - x^3 - 6x^2 + 3y - 11x - 4 = 0 \\
77 & y^2 - x^3 - 6x^2 + 3y - 14x - 10 = 0 \\
79 & y^2 - x^3 + 3xy - 2x^2 + 8y + 8x + 15 = 0 \\
82 & y^2 - x^3 + 3xy - 4x^2 + 9y + 2x + 14 = 0 \\
83 & y^2 - x^3 + 3xy - 2x^2 + 8y + 5x + 12 = 0 \\
86 & y^2 - x^3 - 4x^2 + 3y - 5x = 0 \\
89 & y^2 - x^3 + 3xy - 2x^2 + 8y + 7x + 14 = 0 \\
91 & y^2 - x^3 + 6xy + 17y + 23x + 42 = 0 \\
101 & y^2 - x^3 - 4x^2 + 3y - 4x + 2 = 0 \\
102 & y^2 - x^3 + 3xy - 2x^2 + 5y + 4x + 6 = 0 \\
111 & y^2 - x^3 + 6xy + 17y + 25x + 48 = 0 \\
114 & y^2 - x^3 - 2x^2 + 3y + x + 2 = 0 \\
118 & y^2 - x^3 + 3xy - 2x^2 + 5y + x + 2 = 0 \\
123 & y^2 - x^3 - 2x^2 + 3y - 2x + 2 = 0 \\
130 & y^2 - x^3 - 3xy - 4x^2 - 6y - 3x + 2 = 0 \\
131 & y^2 - x^3 - 2x^2 + 3y - 2x + 1 = 0 \\
138 & y^2 - x^3 + 3xy - 2x^2 + 5y + 3x + 4 = 0 \\
141 & y^2 - x^3 - 2x^2 + 3y + 3 = 0 \\
142 & y^2 - x^3 + 3xy - 2x^2 + 5y + 3x + 6 = 0 \\
143 & y^2 - x^3 - 2x^2 + 3y + 5 = 0 \\
145 & y^2 - x^3 + 3xy + 4y + 7x + 4 = 0 \\
155 & y^2 - x^3 - 2x^2 + 3y + 2 = 0 \\
159 & y^2 - x^3 + 3xy + 4y + 4x + 3 = 0 \\
174 & y^2 - x^3 + 3xy + y + x = 0 \\
182 & y^2 - x^3 + 3y - x + 2 = 0 \\
190 & y^2 - x^3 + 3xy - 2x^2 + 5y = 0 \\
195 & y^2 - x^3 + 3xy + 2x^2 + x = 0 \\
210 & y^2 - x^3 + 3xy - 2x^2 + 5y + 5x + 10 = 0 \\
222 & y^2 - x^3 + 3y + x + 2 = 0 \\
231 & y^2 - x^3 + 3y - 2x + 2 = 0 \\
238 & y^2 - x^3 + 3xy + y - 2x - 2 = 0 \\
\end{array}
$$
\end{table}

In Table~\ref{tab:y & x genus one}, we list the generators of the function field of the
underlying algebraic curve for all $38$ genus one moonshine groups $\Gamma_0(N)^+$.  In
some instances, the computations behind Table~\ref{tab:y & x genus one} were quite intensive.
For $N=79$, the Kronecker limit function has weight $12$, and we needed seven iterations
of the algorithm.  When considering all functions whose numerator had weight up to $84$, the
set $\s_8$ had $13,158$ functions, and the orders of the pole at $i\infty$ was as large as $280$.
As $N$ became larger, the size of the denominators of the rational coefficients in the
$q$-expansions for $\s$ grew very large, in some cases becoming thousands of digits in length.
Nonetheless,  we have computed $q$-expansions of generators of the function field in all 38 genus
one cases out to order $q^{\kappa}$ where $\kappa$ is listed in Table~\ref{tab:kappa genus one}.
The exact arithmetic of the program and successful completion of the algorithm, together with
Theorem~\ref{genus_notzero_integrality} yields the following result.

\emph{For each genus one moonshine group, the two generators $j_{1;N}$ and $j_{2;N}$
of the function field have integer $q$-expansions.}

If we set $x=j_{1;37}$ and $y=j_{2;37}$, then it can be shown that $j_{1;37}$ and $j_{2;37}$ satisfy
the cubic relation
$$
y^2 - x^3 + 6xy - 6x^2 + 41y + 49x + 300 = 0.
$$
In Table~\ref{pol:genus one} we present the list of cubic relations satisfied by $j_{1;N}$ and
$j_{2;N}$ for all genus one moonshine groups.

As in the genus zero setting, there are two important consequences of the successful
implementation of our algorithm.  First, the functions $j_{1;N}$ and $j_{2;N}$ can be expressed
as linear combinations of elements in $\s_M$ for some $M$.  In particular, the Weierstrass
$\wp$-function $j_{1;N}$ can be written in terms of certain holomorphic Eisenstein series and
powers of the Kronecker limit function.  The same assertion holds for $j_{2;N}$.  Second, since
$j_{1;N}$ and $j_{2;N}$ generate the function field, we conclude that all holomorphic modular
forms can be written in terms of certain holomorphic Eisenstein series and the Kronecker limit
functions.  A full documentation of these results will be given in \cite{JST13b}.

\section{Higher genus examples}\label{higher genus}

\begin{sidewaystable}
\caption{\label{tab:y & x genus two}$q$-expansion of $y = j_{2;N}(z)$, and $q$-expansion
of $x = j_{1;N}(z)$, and $\kappa$ for the genus two moonshine groups $\Gamma_0(N)^+$.}
$$
\begin{array}{ccllc}
N   &&\hspace*{.18\textheight} y  &\hspace*{.18\textheight}  x  &  \kappa \\[1ex]
67  &&  q^{-4} + 3q^{-2} + 3q^{-1} + 14q + 30q^{2} + 45q^{3} + 84q^{4} + 129q^{5} + 214q^{6} + \ldots  &  q^{-3} + q^{-2} + 3q^{-1} + 9q + 13q^{2} + 23q^{3} + 33q^{4} + 52q^{5} + 75q^{6} + \ldots  &  240 \\
73  &&  q^{-4} + 3q^{-2} + 3q^{-1} + 12q + 25q^{2} + 36q^{3} + 66q^{4} + 97q^{5} + 156q^{6} + \ldots  &  q^{-3} + q^{-2} + 3q^{-1} + 8q + 11q^{2} + 19q^{3} + 26q^{4} + 40q^{5} + 57q^{6} + \ldots  &  240 \\
85  &&  q^{-4} + 2q^{-2} + q^{-1} + 8q + 15q^{2} + 20q^{3} + 34q^{4} + 51q^{5} + 85q^{6} + \ldots  &  q^{-3} + q^{-2} + 3q^{-1} + 6q + 8q^{2} + 14q^{3} + 19q^{4} + 26q^{5} + 35q^{6} + \ldots  &  508 \\
93  &&  q^{-4} + 3q^{-2} + 3q^{-1} + 9q + 16q^{2} + 18q^{3} + 36q^{4} + 46q^{5} + 66q^{6} + \ldots  &  q^{-3} + q^{-1} + 3q + 3q^{2} + 9q^{3} + 9q^{4} + 13q^{5} + 21q^{6} + 27q^{7} + \ldots  &  240 \\
103  &&  q^{-4} + 3q^{-2} + 3q^{-1} + 7q + 13q^{2} + 16q^{3} + 27q^{4} + 34q^{5} + 51q^{6} + \ldots  &  q^{-3} + q^{-1} + 3q + 3q^{2} + 6q^{3} + 7q^{4} + 11q^{5} + 14q^{6} + 20q^{7} + \ldots  &  240 \\
106  &&  q^{-4} + 2q^{-2} + q^{-1} + 4q + 11q^{2} + 10q^{3} + 24q^{4} + 23q^{5} + 45q^{6} + \ldots  &  q^{-3} + q^{-2} + 2q^{-1} + 4q + 5q^{2} + 9q^{3} + 9q^{4} + 15q^{5} + 17q^{6} + \ldots  &  240 \\
107  &&  q^{-4} + q^{-2} + q^{-1} + 4q + 8q^{2} + 10q^{3} + 17q^{4} + 23q^{5} + 35q^{6} + 47q^{7} + \ldots  &  q^{-3} + q^{-2} + 2q^{-1} + 4q + 5q^{2} + 8q^{3} + 10q^{4} + 14q^{5} + 18q^{6} + \ldots  &  240 \\
115  &&  q^{-4} + 3q^{-2} + 3q^{-1} + 5q + 10q^{2} + 14q^{3} + 22q^{4} + 25q^{5} + 36q^{6} + \ldots  &  q^{-3} + q^{-1} + 3q + 3q^{2} + 4q^{3} + 5q^{4} + 8q^{5} + 11q^{6} + 15q^{7} + \ldots  &  508 \\
122  &&  q^{-4} + 2q^{-2} + q^{-1} + 3q + 9q^{2} + 7q^{3} + 17q^{4} + 16q^{5} + 30q^{6} + 30q^{7} + \ldots  &  q^{-3} + 2q^{-1} + 3q + 2q^{2} + 6q^{3} + 4q^{4} + 9q^{5} + 8q^{6} + 15q^{7} + \ldots  &  366 \\
129  &&  q^{-4} + 2q^{-2} + 2q^{-1} + 5q + 7q^{2} + 7q^{3} + 15q^{4} + 16q^{5} + 22q^{6} + \ldots  &  q^{-3} + q^{-2} + q^{-1} + 2q + 4q^{2} + 6q^{3} + 5q^{4} + 8q^{5} + 11q^{6} + \ldots  &  240 \\
133  &&  q^{-4} + 3q^{-2} + 3q^{-1} + 5q + 8q^{2} + 9q^{3} + 15q^{4} + 19q^{5} + 26q^{6} + \ldots  &  q^{-3} - q^{-2} + q + 3q^{3} + 2q^{4} + 2q^{5} + 3q^{6} + 6q^{7} + 7q^{8} + 11q^{9} + \ldots  &  764 \\
134  &&  q^{-4} + q^{-2} + q^{-1} + 2q + 6q^{2} + 5q^{3} + 12q^{4} + 11q^{5} + 20q^{6} + 20q^{7}+ \ldots  &  q^{-3} + q^{-2} + q^{-1} + 3q + 3q^{2} + 5q^{3} + 5q^{4} + 8q^{5} + 9q^{6} + 13q^{7} + \ldots  &  240 \\
146  &&  q^{-4} + q^{-2} + q^{-1} + 2q + 5q^{2} + 4q^{3} + 10q^{4} + 9q^{5} + 16q^{6} + 15q^{7} + \ldots  &  q^{-3} + q^{-2} + q^{-1} + 2q + 3q^{2} + 5q^{3} + 4q^{4} + 6q^{5} + 7q^{6} + 11q^{7} + \ldots  &  240 \\
154  &&  q^{-4} + q^{-2} + q + 4q^{2} + 3q^{3} + 10q^{4} + 7q^{5} + 13q^{6} + 13q^{7} + 23q^{8} + \ldots  &  q^{-3} + q^{-2} + q^{-1} + 2q + 3q^{2} + 5q^{3} + 3q^{4} + 5q^{5} + 6q^{6} + 9q^{7} + \ldots  &  881 \\
158  &&  q^{-4} + 2q^{-2} + q^{-1} + 2q + 6q^{2} + 4q^{3} + 10q^{4} + 8q^{5} + 15q^{6} + 14q^{7} + \ldots  &  q^{-3} + q^{-1} + 2q + q^{2} + 3q^{3} + 2q^{4} + 5q^{5} + 4q^{6} + 7q^{7} + 6q^{8} + \ldots  &  240 \\
161  &&  q^{-4} + q^{-2} + q^{-1} + 3q + 4q^{2} + 4q^{3} + 7q^{4} + 7q^{5} + 10q^{6} + 13q^{7} + \ldots  &  q^{-3} + q^{-1} + q + q^{2} + 3q^{3} + 3q^{4} + 4q^{5} + 5q^{6} + 6q^{7} + 6q^{8} + \ldots  &  698 \\
165  &&  q^{-4} + 2q^{-2} + q^{-1} + 2q + 5q^{2} + 4q^{3} + 10q^{4} + 8q^{5} + 10q^{6} + 15q^{7} + \ldots  &  q^{-3} + q^{-1} + 2q + q^{2} + 3q^{3} + 2q^{4} + 3q^{5} + 5q^{6} + 6q^{7} + 5q^{8} + \ldots  &  908 \\
166  &&  q^{-4} + 2q^{-2} + q^{-1} + 2q + 5q^{2} + 4q^{3} + 9q^{4} + 7q^{5} + 14q^{6} + 12q^{7} + \ldots  &  q^{-3} + q^{-1} + 2q + q^{2} + 3q^{3} + 2q^{4} + 4q^{5} + 3q^{6} + 6q^{7} + 5q^{8} + \ldots  &  240 \\
167  &&   q^{-4} + q^{-2} + q^{-1} + 2q + 4q^{2} + 4q^{3} + 6q^{4} + 7q^{5} + 10q^{6} + 12q^{7} + \ldots  &  q^{-3} + q^{-2} + q^{-1} + 2q + 2q^{2} + 3q^{3} + 4q^{4} + 5q^{5} + 6q^{6} + 7q^{7} + \ldots  &  240 \\
170  &&  q^{-4} + q^{-1} + 2q + 3q^{2} + 2q^{3} + 6q^{4} + 5q^{5} + 9q^{6} + 8q^{7} + 14q^{8} + \ldots  &  q^{-3} + q^{-2} + q^{-1} + 2q + 2q^{2} + 4q^{3} + 3q^{4} + 4q^{5} + 5q^{6} + 8q^{7} + \ldots  &  508 \\
177  &&  q^{-4} + q^{-2} + q^{-1} + 2q + 4q^{2} + 3q^{3} + 5q^{4} + 7q^{5} + 8q^{6} + 10q^{7} + \ldots  &  q^{-3} + q^{-1} + q + q^{2} + 3q^{3} + 2q^{4} + 3q^{5} + 4q^{6} + 4q^{7} + 5q^{8} + \ldots  &  240 \\
186  &&  q^{-4} + q^{-2} + q^{-1} + q + 4q^{2} + 2q^{3} + 6q^{4} + 6q^{5} + 8q^{6} + 7q^{7} + \ldots  &  q^{-3} + q^{-1} + q + q^{2} + 3q^{3} + q^{4} + 3q^{5} + 3q^{6} + 5q^{7} + 4q^{8} + \ldots  &  240 \\
191  &&  q^{-4} + q^{-2} + q^{-1} + 2q + 3q^{2} + 3q^{3} + 5q^{4} + 5q^{5} + 7q^{6} + 8q^{7} + \ldots  &  q^{-3} + q^{-1} + q + q^{2} + 2q^{3} + 2q^{4} + 3q^{5} + 3q^{6} + 4q^{7} + 4q^{8} + \ldots  &  240 \\
205  &&  q^{-4} + q^{-2} + q^{-1} + 2q + 2q^{2} + 3q^{3} + 5q^{4} + 4q^{5} + 6q^{6} + 6q^{7} + \ldots  &  q^{-3} + q^{-1} + q + q^{2} + 2q^{3} + 2q^{4} + 2q^{5} + 2q^{6} + 4q^{7} + 3q^{8} + \ldots  &  508 \\
206  &&  q^{-4} + q^{-2} + q^{-1} + q + 3q^{2} + 2q^{3} + 5q^{4} + 4q^{5} + 7q^{6} + 6q^{7} + \ldots  &  q^{-3} + q^{-1} + q + q^{2} + 2q^{3} + q^{4} + 3q^{5} + 2q^{6} + 4q^{7} + 3q^{8} + \ldots  &  240 \\
209  &&  q^{-4} + q + 2q^{2} + q^{3} + 3q^{4} + 3q^{5} + 5q^{6} + 6q^{7} + 8q^{8} + 8q^{9} + 10q^{10} + \ldots  &  q^{-3} + q^{-2} + q^{-1} + q + 2q^{2} + 3q^{3} + 2q^{4} + 3q^{5} + 3q^{6} + 4q^{7} + \ldots  &  380 \\
213  &&  q^{-4} + q^{-2} + q^{-1} + q + 3q^{2} + 2q^{3} + 4q^{4} + 5q^{5} + 5q^{6} + 6q^{7} + \ldots  &  q^{-3} + q^{-1} + q + q^{2} + 2q^{3} + q^{4} + 2q^{5} + 3q^{6} + 3q^{7} + 3q^{8} + \ldots  &  240 \\
215  &&  q^{-4} + 2q^{-2} + 2q^{-1} + q + 3q^{2} + 4q^{3} + 5q^{4} + 5q^{5} + 7q^{6} + 8q^{7} + \ldots  &  q^{-3} - q^{-2} + q + q^{2} + q^{5} + q^{6} + 2q^{7} + q^{8} + 3q^{9} + 2q^{10} + \ldots  &  508 \\
221  &&  q^{-4} + q^{-2} + q^{-1} + q + 3q^{2} + 2q^{3} + 4q^{4} + 4q^{5} + 5q^{6} + 5q^{7} + 8q^{8} + \ldots  &  q^{-3} - q^{-2} + q^{-1} + q + 2q^{3} + q^{5} + q^{6} + 2q^{7} + 2q^{8} + 3q^{9} + \ldots  &  359 \\
230  &&  q^{-4} + q^{-2} + q^{-1} + q + 2q^{2} + 2q^{3} + 4q^{4} + 3q^{5} + 6q^{6} + 4q^{7} + 9q^{8} + \ldots  &  q^{-3} + q^{-1} + q + q^{2} + 2q^{3} + q^{4} + 2q^{5} + q^{6} + 3q^{7} + 2q^{8} + \ldots  &  508 \\
255  &&  q^{-4} + 2q^{-2} + q^{-1} + 2q + 3q^{2} + 2q^{3} + 4q^{4} + 3q^{5} + 4q^{6} + 4q^{7} + \ldots  &  q^{-3} + q^{-2} + 2q^{2} + 2q^{3} + q^{4} + 2q^{5} + 2q^{6} + 2q^{7} + 3q^{8} + 4q^{9} + \ldots  &  508 \\
266  &&  q^{-4} + q^{-2} + q^{-1} + q + 2q^{2} + q^{3} + 3q^{4} + 3q^{5} + 4q^{6} + 3q^{7} + 6q^{8} + \ldots  &  q^{-3} - q^{-2} + 2q^{-1} + q + q^{3} + 2q^{5} + q^{6} + 2q^{7} + q^{8} + 3q^{9} + \ldots  &  764 \\
285  &&  q^{-4} + q^{-2} + q + 2q^{2} + q^{3} + 2q^{4} + 2q^{5} + 3q^{6} + 3q^{7} + 5q^{8} + 3q^{9} + \ldots  &  q^{-3} + q^{-1} + q + 2q^{3} + q^{4} + q^{5} + q^{6} + q^{7} + 2q^{8} + 3q^{9} + 2q^{10} + \ldots  &  508 \\
286  &&  q^{-4} + q^{-2} + q + 2q^{2} + q^{3} + 3q^{4} + q^{5} + 3q^{6} + 2q^{7} + 5q^{8} + 4q^{9} + \ldots  &  q^{-3} + q^{-1} + q + q^{3} + q^{4} + 2q^{5} + q^{6} + 2q^{7} + q^{8} + 3q^{9} + 2q^{10} + \ldots  &  359 \\
287  &&  q^{-4} + q^{-2} + q^{-1} + q + 2q^{2} + 2q^{3} + 2q^{4} + 2q^{5} + 3q^{6} + 3q^{7} + 4q^{8} + \ldots  &  q^{-3} + q^{-2} + q^{-1} + q + q^{2} + q^{3} + 2q^{4} + 2q^{5} + 2q^{6} + 2q^{7} + \ldots  &  577 \\
299  &&  q^{-4} - q^{-2} + q^{-1} + q + q^{3} + 2q^{4} + 2q^{5} + q^{6} + q^{7} + 2q^{8} + 2q^{9} + \ldots  &  q^{-3} + q^{-2} + q + q^{2} + q^{3} + q^{4} + q^{5} + 2q^{6} + 2q^{7} + 2q^{8} + 3q^{9} + \ldots  &  359 \\
330  &&  q^{-4} + q^{-1} + q^{2} + 2q^{4} + 2q^{5} + 2q^{6} + q^{7} + 4q^{8} + 3q^{9} + 3q^{10} + \ldots  &  q^{-3} + q^{-1} + q^{2} + q^{3} + q^{5} + q^{6} + 2q^{7} + q^{8} + 3q^{9} + q^{10} + \ldots  &  908 \\
357  &&  q^{-4} + q + q^{2} + q^{3} + q^{4} + q^{5} + q^{6} + q^{7} + 3q^{8} + q^{9} + 2q^{10} + 3q^{11} + \ldots  &  q^{-3} + q^{-1} + q^{3} + q^{4} + q^{5} + q^{6} + q^{7} + q^{8} + 2q^{9} + q^{10} + 2q^{11} + \ldots  &  718 \\
390  &&  q^{-4} + 2q^{-2} + q^{-1} + q^{2} + q^{3} + 3q^{4} + q^{5} + 2q^{6} + 2q^{7} + 4q^{8} + 2q^{9} + \ldots  &  q^{-3} - q^{-2} + q + q^{3} + q^{7} - q^{8} + q^{9} + q^{12} + q^{13} + 2q^{15} + q^{16} + \ldots  &  838 \\
\end{array}
$$
\end{sidewaystable}

\begin{table}
\caption{\label{pol:genus two}Polynomial relation satisfied by $x=j_{1;N}$ and $y=j_{2;N}$
for the genus two moonshine groups $\Gamma_0(N)^+$.}
$$
\begin{array}{rc}
N & \text{polynomial relation} \\[1ex]
67 & y^3 - x^4 + 4y^2x + 5yx^2 - 15x^3 + 47y^2 + 121yx - 34x^2 + 724y + 602x + 3348 = 0 \\
73 & y^3 - x^4 + 4y^2x + 5yx^2 - 15x^3 + 43y^2 + 111yx - 39x^2 + 606y + 467x + 2508 = 0 \\
85 & y^3 - x^4 + 4y^2x + 8yx^2 - 7x^3 + 28y^2 + 88yx + 31x^2 + 277y + 367x + 864 = 0 \\
93 & y^3 - x^4 - 5yx^2 - 9x^3 + 16y^2 - 27yx - 57x^2 + 49y - 189x - 66 = 0 \\
103 & y^3 - x^4 - 5yx^2 - 9x^3 + 16y^2 - 21yx - 60x^2 + 65y - 164x + 18 = 0 \\
106 & y^3 - x^4 + 4y^2x + 4yx^2 - 7x^3 + 22y^2 + 55yx + 4x^2 + 158y + 160x + 360 = 0 \\
107 & y^3 - x^4 + 4y^2x + 7yx^2 - 5x^3 + 19y^2 + 58yx + 20x^2 + 128y + 164x + 272 = 0 \\
115 & y^3 - x^4 - 5yx^2 - 9x^3 + 16y^2 - 15yx - 59x^2 + 79y - 105x + 120 = 0 \\
122 & y^3 - x^4 + 2yx^2 - 3x^3 + 12y^2 + 3yx + 6x^2 + 48y + 12x + 64 = 0 \\
129 & y^3 - x^4 + 4y^2x - 10x^3 + 13y^2 + 21yx - 34x^2 + 48y - 18x + 36 = 0 \\
133 & y^3 - x^4 - 4y^2x - 7yx^2 - 3x^3 - 3y^2 - 77yx - 47x^2 - 84y - 243x - 198 = 0 \\
134 & y^3 - x^4 + 4y^2x + 3yx^2 - 5x^3 + 15y^2 + 35yx + 2x^2 + 72y + 66x + 108 = 0 \\
146 & y^3 - x^4 + 4y^2x + 3yx^2 - 5x^3 + 11y^2 + 27yx + 3x^2 + 40y + 43x + 48 = 0 \\
154 & y^3 - x^4 + 4y^2x + 3yx^2 - 2x^3 + 10y^2 + 29yx + 18x^2 + 33y + 49x + 36 = 0 \\
158 & y^3 - x^4 - 2yx^2 - 3x^3 + 10y^2 - 3yx - 11x^2 + 31y - 13x + 28 = 0 \\
161 & y^3 - x^4 + yx^2 - 3x^3 + 4y^2 - 3yx - x^2 = 0 \\
165 & y^3 - x^4 - 2yx^2 - 3x^3 + 10y^2 - 3yx - 8x^2 + 27y - 3x + 18 = 0 \\
166 & y^3 - x^4 - 2yx^2 - 3x^3 + 10y^2 - 3yx - 8x^2 + 34y - 8x + 40 = 0 \\
167 & y^3 - x^4 + 4y^2x + 3yx^2 - 5x^3 + 11y^2 + 23yx - 6x^2 + 35y + 15x + 25 = 0 \\
170 & y^3 - x^4 + 4y^2x + 6yx^2 - 3x^3 + 10y^2 + 26yx + 7x^2 + 25y + 21x = 0 \\
177 & y^3 - x^4 + yx^2 - 3x^3 + 4y^2 - x^2 + 5y + 2 = 0 \\
186 & y^3 - x^4 + yx^2 - 3x^3 + 4y^2 + 3yx - x^2 + 5y + 3x + 2 = 0 \\
191 & y^3 - x^4 + yx^2 - 3x^3 + 4y^2 - 2x^2 + 4y - x + 1 = 0 \\
205 & y^3 - x^4 + yx^2 - 3x^3 + 4y^2 + x^2 + y + 4x - 2 = 0 \\
206 & y^3 - x^4 + yx^2 - 3x^3 + 4y^2 + 3yx - 2x^2 + 7y + 2x + 4 = 0 \\
209 & y^3 - x^4 + 4y^2x + 6yx^2 + 5y^2 + 17yx + 13x^2 + 11y + 22x + 10 = 0 \\
213 & y^3 - x^4 + yx^2 - 3x^3 + 4y^2 + 3yx - 2x^2 + 6y + 3x + 3 = 0 \\
215 & y^3 - x^4 - 4y^2x - 4yx^2 - 2x^3 - y^2 - 33yx - 32x^2 + y - 52x - 21 = 0 \\
221 & y^3 - x^4 - 4y^2x + 3yx^2 - x^3 + 5y^2 - 14yx + 6x^2 + 8y - 8x + 4 = 0 \\
230 & y^3 - x^4 + yx^2 - 3x^3 + 4y^2 + 3yx + x^2 + 7y + 7x + 4 = 0 \\
255 & y^3 - x^4 + 4y^2x - 4yx^2 - 7x^3 - 2y^2 + 7yx - 20x^2 - 17y + 7x + 18 = 0 \\
266 & y^3 - x^4 - 4y^2x + 7yx^2 - x^3 + 5y^2 - 17yx + 11x^2 + 10y - 15x + 6 = 0 \\
285 & y^3 - x^4 + yx^2 + 4y^2 - 3yx + 4x^2 + 6y - 6x + 3 = 0 \\
286 & y^3 - x^4 + yx^2 + 4y^2 - 3yx + 5y - 3x + 2 = 0 \\
287 & y^3 - x^4 + 4y^2x + 3yx^2 - 5x^3 + 7y^2 + 14yx - 9x^2 + 11y - 7x - 6 = 0 \\
299 & y^3 - x^4 + 4y^2x + 5yx^2 - x^3 + 5y^2 + 11yx + 3x^2 + 2y - 6x - 12 = 0 \\
330 & y^3 - x^4 + 4yx^2 - 3x^3 - 2y^2 + 9yx - 2x^2 + 3y + 3x + 6 = 0 \\
357 & y^3 - x^4 + 4yx^2 - 2y^2 - 3yx + x^2 - 3x + 3 = 0 \\
390 & y^3 - x^4 - 4y^2x - 4yx^2 + x^3 - 27yx - 16x^2 + 18y + 56 = 0 \\
\end{array}
$$
\end{table}

\begin{sidewaystable}
\caption{\label{tab:y & x genus three}$q$-expansion of $y = j_{2;N}(z)$, and $q$-expansion
of $x = j_{1;N}(z)$, and $\kappa$ for the genus three moonshine groups $\Gamma_0(N)^+$.
For $N=510$ the value of $\kappa$ is for the generators $v=y+x$ and $w=y-x$.}
$$
\begin{array}{ccllc}
N   &&\hspace*{.18\textheight} y  &\hspace*{.18\textheight}  x  &  \kappa \\[1ex]
97  &&  q^{-5} + 2q^{-3} + q^{-2} + 5q^{-1} + 14q + 20q^{2} + 38q^{3} + 54q^{4} + 92q^{5} + \ldots  &  q^{-4} + q^{-3} + 3q^{-2} + 4q^{-1} + 11q + 18q^{2} + 25q^{3} + 39q^{4} + \ldots  &  487 \\
109  &&  q^{-5} + 2q^{-4} + 3q^{-2} + 4q^{-1} + 14q + 25q^{2} + 35q^{3} + 61q^{4} + 89q^{5} + \ldots  &  q^{-3} + q^{-2} + 2q^{-1} + 4q + 5q^{2} + 8q^{3} + 9q^{4} + 13q^{5} + 17q^{6} + \ldots  &  508 \\
113  &&  q^{-5} + 2q^{-3} + 2q^{-2} + 5q^{-1} + 13q + 16q^{2} + 28q^{3} + 38q^{4} + 60q^{5} + \ldots  &  q^{-4} + q^{-3} + 2q^{-2} + 3q^{-1} + 7q + 12q^{2} + 16q^{3} + 24q^{4} + 32q^{5} + \ldots  &  487 \\
127  &&  q^{-5} + q^{-3} + q^{-2} + 4q^{-1} + 7q + 10q^{2} + 17q^{3} + 22q^{4} + 35q^{5} + \ldots  &  q^{-4} + q^{-3} + 2q^{-2} + 2q^{-1} + 6q + 9q^{2} + 12q^{3} + 18q^{4} + 23q^{5} + \ldots  &  487 \\
139  &&  q^{-5} + q^{-3} + q^{-2} + 4q^{-1} + 7q + 8q^{2} + 14q^{3} + 17q^{4} + 28q^{5} + 35q^{6} + \ldots  &  q^{-4} + q^{-3} + 2q^{-2} + 2q^{-1} + 5q + 8q^{2} + 10q^{3} + 15q^{4} + 18q^{5} + \ldots  &  487 \\
149  &&  q^{-5} + q^{-3} + 2q^{-2} + 3q^{-1} + 7q + 8q^{2} + 12q^{3} + 16q^{4} + 24q^{5} + \ldots  &  q^{-4} + q^{-3} + q^{-2} + 2q^{-1} + 4q + 6q^{2} + 8q^{3} + 11q^{4} + 14q^{5} + \ldots  &  487 \\
151  &&  q^{-5} + q^{-4} + q^{-2} + 3q^{-1} + 6q + 8q^{2} + 12q^{3} + 18q^{4} + 25q^{5} + 33q^{6} + \ldots  &  q^{-3} + q^{-2} + q^{-1} + 2q + 3q^{2} + 4q^{3} + 4q^{4} + 6q^{5} + 7q^{6} + 9q^{7} + \ldots  &  508 \\
178  &&  q^{-5} + 2q^{-3} + 3q^{-1} + 6q + 4q^{2} + 11q^{3} + 8q^{4} + 19q^{5} + 16q^{6} + \ldots  &  q^{-4} + 2q^{-2} + q^{-1} + 2q + 5q^{2} + 3q^{3} + 8q^{4} + 6q^{5} + 11q^{6} + \ldots  &  487 \\
179  &&  q^{-5} + q^{-4} + q^{-2} + 2q^{-1} + 4q + 6q^{2} + 8q^{3} + 12q^{4} + 16q^{5} + 20q^{6} + \ldots  &  q^{-3} + q^{-2} + q^{-1} + 2q + 2q^{2} + 3q^{3} + 3q^{4} + 4q^{5} + 5q^{6} + 6q^{7} + \ldots  &  508 \\
183  &&   q^{-5} + q^{-3} + 2q^{-1} + 3q + 4q^{2} + 7q^{3} + 7q^{4} + 14q^{5} + 14q^{6} + 19q^{7} + \ldots  &  q^{-4} + q^{-3} + 2q^{-2} + 2q^{-1} + 4q + 5q^{2} + 6q^{3} + 9q^{4} + 9q^{5} + \ldots  &  635 \\
185  &&  q^{-5} + q^{-3} + q^{-2} + 2q^{-1} + 4q + 4q^{2} + 7q^{3} + 9q^{4} + 14q^{5} + 15q^{6} + \ldots  &  q^{-4} + q^{-2} + 2q + 3q^{2} + 3q^{3} + 4q^{4} + 5q^{5} + 8q^{6} + 9q^{7} + 13q^{8} + \ldots  &  487 \\
187  &&   q^{-5} + q^{-2} + 3q^{-1} + 3q + 5q^{2} + 6q^{3} + 6q^{4} + 10q^{5} + 12q^{6} + 18q^{7} + \ldots  &  q^{-4} + q^{-3} + q^{-2} + q^{-1} + 3q + 3q^{2} + 5q^{3} + 7q^{4} + 8q^{5} + \ldots  &  515 \\
194  &&  q^{-5} + 2q^{-3} + q^{-2} + 3q^{-1} + 6q + 4q^{2} + 10q^{3} + 8q^{4} + 16q^{5} + 14q^{6} + \ldots  &  q^{-4} + q^{-3} + q^{-2} + 2q^{-1} + 3q + 4q^{2} + 5q^{3} + 7q^{4} + 7q^{5} + \ldots  &  487 \\
203  &&  q^{-5} + q^{-3} + q^{-2} + 3q^{-1} + 4q + 4q^{2} + 7q^{3} + 7q^{4} + 10q^{5} + 13q^{6} + \ldots  &  q^{-4} + q^{-3} + q^{-2} + q^{-1} + 2q + 4q^{2} + 4q^{3} + 6q^{4} + 7q^{5} + 8q^{6} + \ldots  &  1015 \\
217  &&  q^{-5} + 2q^{-3} + q^{-2} + 3q^{-1} + 4q + 5q^{2} + 7q^{3} + 8q^{4} + 11q^{5} + 12q^{6} + \ldots  &  q^{-4} - q^{-3} + q^{-2} - q^{-1} + q^{2} + 2q^{4} + 3q^{5} + 3q^{6} + 3q^{7} + 5q^{8} + \ldots  &  905 \\
239  &&  q^{-5} + q^{-3} + q^{-2} + 2q^{-1} + 3q + 3q^{2} + 5q^{3} + 5q^{4} + 7q^{5} + 8q^{6} + \ldots  &  q^{-4} + q^{-3} + q^{-2} + q^{-1} + 2q + 3q^{2} + 3q^{3} + 4q^{4} + 5q^{5} + 6q^{6} + \ldots  &  487 \\
246  &&  q^{-5} + 2q^{-1} + 2q + 2q^{2} + 3q^{3} + 2q^{4} + 6q^{5} + 4q^{6} + 9q^{7} + 8q^{8} + \ldots  &  q^{-4} + q^{-2} + q + 2q^{2} + q^{3} + 4q^{4} + 2q^{5} + 4q^{6} + 4q^{7} + 7q^{8} + \ldots  &  635 \\
249  &&  q^{-5} + q^{-3} + q^{-2} + q^{-1} + 3q + 2q^{2} + 4q^{3} + 5q^{4} + 6q^{5} + 7q^{6} + \ldots  &  q^{-4} + q^{-1} + q + 2q^{2} + q^{3} + 2q^{4} + 3q^{5} + 3q^{6} + 3q^{7} + 6q^{8} + \ldots  &  635 \\
258  &&  q^{-5} + 2q^{-3} + q^{-2} + 2q^{-1} + 4q + 2q^{2} + 6q^{3} + 5q^{4} + 8q^{5} + 7q^{6} + \ldots  &  q^{-4} + q^{-3} + q^{-2} + q^{-1} + q + 3q^{2} + 3q^{3} + 4q^{4} + 4q^{5} + 5q^{6} + \ldots  &  635 \\
282  &&  q^{-4} + q^{-2} + q + 2q^{2} + q^{3} + 3q^{4} + q^{5} + 3q^{6} + 3q^{7} + 5q^{8} + 3q^{9} + \ldots  &  q^{-3} + q^{-1} + q + 2q^{3} + q^{4} + q^{5} + q^{6} + 2q^{7} + q^{8} + 3q^{9} + 2q^{10} + \ldots  &  282 \\
290  &&  q^{-5} + q^{-3} + q^{-1} + 2q + q^{2} + 3q^{3} + 2q^{4} + 6q^{5} + 4q^{6} + 7q^{7} + 5q^{8} + \ldots  &  q^{-4} + 2q^{-2} + q^{-1} + q + 3q^{2} + q^{3} + 4q^{4} + 2q^{5} + 3q^{6} + 3q^{7} + \ldots  &  487 \\
295  &&  q^{-5} + q^{-1} + q + q^{2} + 2q^{3} + 2q^{4} + 4q^{5} + 3q^{6} + 4q^{7} + 5q^{8} + 6q^{9} + \ldots  &  q^{-4} + q^{-3} + q^{-2} + q^{-1} + 2q + 2q^{2} + 2q^{3} + 3q^{4} + 3q^{5} + 4q^{6} + \ldots  &  487 \\
303  &&  q^{-5} + q^{-2} + q^{-1} + 2q + q^{2} + 2q^{3} + 3q^{4} + 3q^{5} + 3q^{6} + 6q^{7} + 5q^{8} + \ldots  &  q^{-4} + q^{-3} + q^{-1} + q + 2q^{2} + 2q^{3} + 2q^{4} + 3q^{5} + 3q^{6} + 3q^{7} + \ldots  &  635 \\
310  &&  q^{-5} + q^{-1} + q + q^{2} + 2q^{3} + q^{4} + 4q^{5} + 2q^{6} + 5q^{7} + 4q^{8} + 6q^{9} + \ldots  &  q^{-4} + q^{-2} + q + 2q^{2} + q^{3} + 2q^{4} + q^{5} + 3q^{6} + 2q^{7} + 4q^{8} + \ldots  &  487 \\
318  &&  q^{-5} + q^{-3} - q^{-2} + q^{-1} + 2q + q^{2} + 3q^{3} + q^{4} + 4q^{5} + 2q^{6} + 5q^{7} + \ldots  &  q^{-4} + 2q^{-2} + q^{-1} + q + 2q^{2} + q^{3} + 3q^{4} + 2q^{5} + 3q^{6} + 3q^{7} + \ldots  &  635 \\
329  &&  q^{-5} + q^{-2} + q^{-1} + 2q + 2q^{2} + q^{3} + 2q^{4} + 3q^{5} + 3q^{6} + 4q^{7} + 4q^{8} + \ldots  &  q^{-4} + q^{-3} + q^{-1} + q + q^{2} + 2q^{3} + 2q^{4} + 2q^{5} + 3q^{6} + 3q^{7} + \ldots  &  905 \\
429  &&  q^{-5} + q^{-2} + q^{-1} + 2q + q^{2} + q^{3} + q^{4} + 2q^{5} + q^{6} + 3q^{7} + 2q^{8} + \ldots  &  q^{-4} + q^{-3} + q^{2} + q^{3} + q^{4} + q^{5} + 2q^{6} + 2q^{7} + 3q^{8} + 2q^{9} + \ldots  &  1187 \\
430  &&  q^{-5} + q^{-3} + q^{-1} + q + q^{2} + 2q^{3} + q^{4} + 3q^{5} + q^{6} + 3q^{7} + 2q^{8} + \ldots  &  q^{-4} - q^{-3} + q^{-2} + q^{4} + 2q^{6} + 2q^{8} + 2q^{10} + q^{11} + 2q^{12} + \ldots  &  487 \\
455  &&  q^{-5} - q^{-3} + q^{-2} + q + q^{2} + 2q^{5} + q^{6} + q^{7} + 2q^{8} + q^{9} + 2q^{10} + \ldots  &  q^{-4} - q^{-3} + q^{-2} + q + q^{2} + q^{4} + 2q^{8} + q^{9} + q^{10} + q^{11} + \ldots  &  1129 \\
462  &&  q^{-5} + q^{-3} + q^{-1} + q + q^{2} + 2q^{3} + q^{4} + 3q^{5} + q^{6} + 2q^{7} + 2q^{8} + \ldots  &  q^{-4} + q^{-2} + q + q^{2} + q^{4} + q^{5} + q^{6} + q^{7} + 2q^{8} + q^{9} + 3q^{10} + \ldots  &  1680 \\
510  &&  q^{-5} + \frac{1}{2}q^{-3} + \frac{1}{2}q^{-2} + q + q^{3} + \frac{3}{2}q^{4} + 2q^{5} + q^{6} + 2q^{7} + \frac{1}{2}q^{8} + \ldots  &  q^{-4} + \frac{1}{2}q^{-3} + \frac{1}{2}q^{-2} + q^{2} + q^{3} + \frac{3}{2}q^{4} + q^{5} + q^{6} + \frac{3}{2}q^{8} + q^{9} + \ldots  &  635 \\
\end{array}
$$
\end{sidewaystable}

\begin{sidewaystable}
\caption{\label{pol:genus three}Polynomial relation satisfied by $x=j_{1;N}$ and $y=j_{2;N}$
for the genus three moonshine groups $\Gamma_0(N)^+$.  For $N=510$, we give the
polynomial relation satisfied by $v=y+x$ and $w=y-x$, instead.}
$$
\begin{array}{rc}
N & \text{polynomial relation} \\[1ex]
97 & y^4 - x^5 + 5y^3x + 12y^2x^2 + 17yx^3 - 27x^4 + 64y^3 + 280y^2x + 545yx^2 - 128x^3 + 1685y^2 + 5965yx + 2558x^2 + 22370y + 31654x + 102984 = 0 \\
109 & y^3 - x^5 - yx^3 + 13y^2x - 17x^4 + 49yx^2 + 59y^2 - 32x^3 + 494yx + 743x^2 + 1146y + 4461x + 7254 = 0 \\
113 & y^4 - x^5 + 5y^3x + 7y^2x^2 + 3yx^3 - 26x^4 + 50y^3 + 162y^2x + 129yx^2 - 252x^3 + 870y^2 + 1629yx - 878x^2 + 6279y + 1397x + 12108 = 0 \\
127 & y^4 - x^5 + 5y^3x + 11y^2x^2 + 9yx^3 - 21x^4 + 39y^3 + 160y^2x + 209yx^2 - 139x^3 + 598y^2 + 1580yx - 63x^2 + 3968y + 2600x + 7344 = 0 \\
139 & y^4 - x^5 + 5y^3x + 11y^2x^2 + 9yx^3 - 21x^4 + 34y^3 + 140y^2x + 179yx^2 - 151x^3 + 455y^2 + 1176yx - 321x^2 + 2574y + 782x + 3084 = 0 \\
149 & y^4 - x^5 + 5y^3x + 6y^2x^2 - 18x^4 + 30y^3 + 87y^2x + 33yx^2 - 130x^3 + 289y^2 + 364yx - 426x^2 + 988y - 412x + 468 = 0 \\
151 & y^3 - x^5 + 2yx^3 + 4y^2x - 4x^4 + 12yx^2 + 19y^2 + 7x^3 + 55yx + 62x^2 + 118y + 176x + 240 = 0 \\
178 & y^4 - x^5 + 2y^2x^2 + 5yx^3 - 12x^4 + 6y^3 + 8y^2x + 44yx^2 - 44x^3 + 19y^2 + 117yx - 32x^2 + 106y + 97x + 132 = 0 \\
179 & y^3 - x^5 + 2yx^3 + 4y^2x - 4x^4 + 15yx^2 + 13y^2 + 8x^3 + 49yx + 69x^2 + 62y + 142x + 108 = 0 \\
183 & y^4 - x^5 + 5y^3x + 11y^2x^2 + 13yx^3 - 10x^4 + 24y^3 + 101y^2x + 171yx^2 + 6x^3 + 228y^2 + 739yx + 376x^2 + 1045y + 1435x + 1650 = 0 \\
185 & y^4 - x^5 + y^2x^2 - 4yx^3 - 8x^4 + 12y^3 + y^2x - 11yx^2 - 38x^3 + 52y^2 - 16yx - 84x^2 + 87y - 89x + 28 = 0 \\
187 & y^4 - x^5 + 5y^3x + 10y^2x^2 + 6yx^3 - 15x^4 + 20y^3 + 71y^2x + 60yx^2 - 95x^3 + 132y^2 + 203yx - 319x^2 + 234y - 570x - 432 = 0 \\
194 & y^4 - x^5 + 5y^3x + 2y^2x^2 - 3yx^3 - 11x^4 + 28y^3 + 44y^2x - 13yx^2 - 50x^3 + 165y^2 + 77yx - 112x^2 + 342y - 42x + 216 = 0 \\
203 & y^4 - x^5 + 5y^3x + 6y^2x^2 - yx^3 - 15x^4 + 16y^3 + 47y^2x + 9yx^2 - 90x^3 + 87y^2 + 82yx - 262x^2 + 144y - 359x - 173 = 0 \\
217 & y^4 - x^5 - 5y^3x + 2y^2x^2 - 5x^4 - 5y^3 - 38y^2x - yx^2 - 4x^3 - 97y^2 - 125yx - 22x^2 - 324y - 187x - 309 = 0 \\
239 & y^4 - x^5 + 5y^3x + 6y^2x^2 - yx^3 - 11x^4 + 15y^3 + 45y^2x + 13yx^2 - 52x^3 + 73y^2 + 86yx - 115x^2 + 114y - 107x - 26 = 0 \\
246 & y^4 - x^5 + 5y^2x^2 - 8x^4 + 5y^3 + 19y^2x + 7yx^2 - 22x^3 + 15y^2 + 14yx - 24x^2 - 5y - 9x = 0 \\
249 & y^4 - x^5 - 4y^2x^2 + yx^3 - 2x^4 + 8y^3 - 6y^2x - 12yx^2 + x^3 + 22y^2 - 20yx - 7x^2 + 24y - 14x + 9 = 0 \\
258 & y^4 - x^5 + 5y^3x + 2y^2x^2 - 8yx^3 - 7x^4 + 11y^3 + 20y^2x - 21yx^2 - 29x^3 + 36y^2 + 7yx - 56x^2 + 46y - 26x + 20 = 0 \\
282 & y^3 - x^4 + yx^2 + 4y^2 - 3yx + 4x^2 + 3y - 3x = 0 \\
290 & y^4 - x^5 + 6y^2x^2 + 5yx^3 - 6x^4 - 2y^3 + 24y^2x + 36yx^2 + 5x^3 + 15y^2 + 77yx + 72x^2 + 62y + 94x + 32 = 0 \\
295 & y^4 - x^5 + 5y^3x + 10y^2x^2 + 10yx^3 - 4x^4 + 12y^3 + 46y^2x + 65yx^2 + 9x^3 + 51y^2 + 136yx + 64x^2 + 90y + 99x + 47 = 0 \\
303 & y^4 - x^5 + 5y^3x + 5y^2x^2 + yx^3 - 7x^4 + 11y^3 + 23y^2x + 12yx^2 - 17x^3 + 27y^2 + 34yx - 11x^2 + 28y + 16x + 20 = 0 \\
310 & y^4 - x^5 + 5y^2x^2 - 4x^4 + 5y^3 + 17y^2x + 11yx^2 - 2x^3 + 23y^2 + 40yx + 10x^2 + 45y + 23x + 26 = 0 \\
318 & y^4 - x^5 + 6y^2x^2 + 9yx^3 - 6x^4 - 6y^3 + 10y^2x + 56yx^2 - 2x^3 + 9y^2 + 123yx + 26x^2 + 100y + 23x - 24 = 0 \\
329 & y^4 - x^5 + 5y^3x + 5y^2x^2 + yx^3 - 7x^4 + 11y^3 + 18y^2x - 2yx^2 - 20x^3 + 18y^2 - 17yx - 31x^2 - 18y - 28x - 12 = 0 \\
429 & y^4 - x^5 + 5y^3x + 5y^2x^2 - 4yx^3 - 7x^4 + y^3 - 4y^2x - 23yx^2 - 21x^3 - 3y^2 - 21yx - 22x^2 - 6y - 8x = 0 \\
430 & y^4 - x^5 - 5y^3x + 6y^2x^2 + 2yx^3 - 4x^4 - 5y^2x + 11yx^2 - 5x^3 - 2y^2 + 8yx - 4x^2 - 8y + 6x = 0 \\
455 & y^4 - x^5 - 5y^3x + 14y^2x^2 - 16yx^3 + 7x^4 + 3y^3 - 7y^2x + 13yx^2 + 2x^3 - 6y^2 + 30yx - 47x^2 - 27y + 66x - 27 = 0 \\
462 & y^4 - x^5 + y^2x^2 - 4x^4 + 5y^3 + 3y^2x - yx^2 - 8x^3 + 5y^2 + 4yx - 20x^2 - 11y - 19x - 24 = 0 \\
510 & w^5 \!-\! 5w^4v \!+\! 4w^4 \!+\! 10w^3v^2 \!-\! 6w^3v \!-\! 3w^3 \!-\! 10w^2v^3 \!-\! 4w^2v^2 \!+\! 46w^2v \!-\! 23w^2 \!+\! 5wv^4 \!+\! 34wv^3 \!-\! 75wv^2 \!+\! 97wv \!-\! 5w \!-\! v^5 \!+\! 4v^4 \!+\! 45v^3 \!-\! 78v^2 \!+\! 50v \!+\! 30 = 0 \\
\end{array}
$$
\end{sidewaystable}

To conclude the presentation of results, we describe some of the information we obtained for
genus two and genus three moonshine groups.  In all instances, we derived $q$-expansions for
the generators of the function fields out to $q^\kappa$, where $\kappa$ is determined according
to Remark~\ref{rem:kappa algorithm}.  With the exception of $j_{1,510}$ and $j_{2,510}$,
the $q$-expansions have integer coefficients.  The $q$-expansions of $j_{1,510}$ and
$j_{2,510}$ have half-integer coefficients.  With a further base change, namely
$v=j_{2,510}+j_{1,510}$ and $w=j_{2,510}-j_{1,510}$, the $q$-expansions for the generators
$v$ and $w$ have integer coefficients for $N=510$, also.  This together with
Theorem~\ref{genus_notzero_integrality} yields the following result and finally proves
Theorem~\ref{main theorem}.

\emph{For each moonshine group of genus up to three, the two generators $j_{1;N}$ and $j_{2;N}$
for $N\neq510$ and $j_{2;N}+j_{1;N}$ and $j_{2;N}-j_{1;N}$ for $N=510$ have integer $q$-expansions.
Moreover, the leading coefficients are one and the orders of the poles at $i\infty$ are at most
$g+2$.}

As it turned out, for all moonshine groups of genus two and for all but four moonshine groups of
genus three, the gap sequence consists of $\{q^{-1},\ldots,q^{-g}\}$, and $i\infty$ was not a
Weierstrass point.  For $\Gamma_0(109)^+$, $\Gamma_0(151)^+$ and $\Gamma_0(179)^+$, the gap
sequence consists of $\{q^{-1},q^{-2},q^{-4}\}$ and for $\Gamma_0(282)^+$ the gap sequence
consists of $\{q^{-1},q^{-2},q^{-5}\}$.  For the latter four groups, $i\infty$ is a Weierstrass
point.

We list in Tables~\ref{tab:y & x genus two} to~\ref{pol:genus three} the $q$-expansions of the
generators as well as the algebraic relation satisfied by the generators.

In \cite{JST13c} we will provide a thorough discussion of our findings in the setting of higher
genus moonshine groups, including expressions for the function field generators in terms of
holomorphic Eisenstein series, relations satisfied by holomorphic Eisenstein series, further
$q$-expansions, and the special role of level $N=510$.

\section{Concluding remarks}\label{concluding remark}

In a few words, we can express a response to the problem posed by T.~Gannon which we quoted in
the introduction.

\emph{The generators of the function fields associated to $X_{N}$
which one should consider are given by the two invariant
holomorphic functions whose poles at $i\infty$ have the smallest
possible orders and with initial $q$-expansions which are normalized
by the criteria of row-reduced echelon form.}

The above statement when applied to all the moonshine groups we considered produced generators
with integer $q$-expansions, except for the level $N=510$ where a further base change was
necessary.  The cases we considered include all moonshine groups $\Gamma_0(N)^+$ with squarefree
$N$ of genus up to three.

In other articles, we will present further results related to the material presented here.
Specifically, in \cite{JST13a}, we study further details regarding genus zero moonshine groups.
In particular, for each level, we list the generators of the ring of holomorphic forms as well
as relations for the Hauptmodul and Kronecker limit functions in terms of holomorphic Eisenstein
series.  Similar information for genus one moonshine groups is studied in \cite{JST13b}, as well
as additional aspects of the ``arithmetic of the moonshine elliptic curves''.

\section*{Acknowledgments}
The first named author (J.J.) discussed a preliminary version of this paper with Peter Sarnak
which included the findings from sections~\ref{parabolic Eisenstein} onward.  During the
discussion, Professor Sarnak suggested that we seek a proof of the integrality of all
coefficients in the $q$-expansion once we have shown that a certain number of the first
coefficients are integers.  His suggestion led us to formulate and prove the material in
section~\ref{integrality}.  We thank Professor Sarnak for generously sharing with us his
mathematical insight.

The classical material in section~\ref{additional aspects} came from a discussion between the
third named author (H.T.) and Abhishek Saha.  The approach using arithmetic algebraic geometry
stemmed from a discussion between the first named author (J.J.), Brian Conrad and an anonymous
reader of a previous draft of this article.  We thank these individuals for allowing us to
include their remarks and ideas.

Finally, the numerical computations for some levels were quite resource demanding.
We thank the Public Enterprise Electric Utility of Bosnia and Herzegovina for generously
granting us full access to one of their new 256 GB RAM computers, which enabled us to compute
for all moonshine groups of genus up to three.  We are very grateful for their support of our
academic research.

\end{document}